\journalname{Numerische Mathematik}
\def\C{{\mathbb{C}}}
\def\CC{\mathcal{C}}
\def\CT{\mathcal{T}}
\def\G{\Gamma}
\def\HrE{{\H^{r}(E)}}
\def\HsO{{\H^{s}(\O)}}
\def\HtE{{\H^{t}(E)}}
\def\HtO{{\H^{t}(\O)}}
\def\HuE{{\H^1(E)}}
\def\HuO{{\H^1(\O)}}
\def\HusO{{\H^{1+s}(\O)}}
\def\HutO{{\H^{1+t}(\O)}}
\def\H{\mathrm{H}}
\def\L{\mathrm{L}}
\def\LO{\L^2(\O)}
\def\N{{\mathbb{N}}}
\def\O{\Omega}
\def\R{{\mathbb{R}}}
\def\T{{\mathcal T}}
\def\bE{\boldsymbol{\mathcal{E}}}
\def\bF{\mathbf{F}}
\def\bG{\boldsymbol{\mathcal{G}}}
\def\bI{\mathbf{I}}
\def\bK{\boldsymbol{\mathcal{K}}}
\def\bM{\mathbf{M}}
\def\bP{\mathbf{P}}
\def\bS{\mathbf{S}}
\def\bT{\mathbf{T}}
\def\bV{\boldsymbol{\mathcal{V}}}
\def\bX{\boldsymbol{\mathcal{X}}}
\def\bY{\boldsymbol{\mathcal{Y}}}
\def\bf{\boldsymbol{f}}
\def\bn{\boldsymbol{n}}
\def\bphi{\boldsymbol{\varphi}}
\def\bpi{\boldsymbol{\Pi}}
\def\bpsi{\boldsymbol{\psi}}
\def\bu{\boldsymbol{u}}
\def\bv{\boldsymbol{v}}
\def\bw{\boldsymbol{w}}
\def\bx{\boldsymbol{x}}
\def\by{\boldsymbol{y}}
\def\dim{\mathop{\mathrm{dim}}\nolimits}
\def\div{\mathop{\mathrm{div}}\nolimits}
\def\divO{\div,\O}
\def\ds{\,ds}
\def\hdel{\widehat{\delta}}
\def\l{\lambda}
\def\rot{\mathop{\mathrm{rot}}\nolimits}
\newcommand\HdivO{{\H(\div;\O)}}
\newcommand\0{\boldsymbol{0}}
\renewcommand\sp{\mathop{\mathrm{sp}}\nolimits}
\newcommand\bbP{\mathbb{P}}
\begin{document}

\title{A virtual element method for the acoustic vibration problem}
\author{Louren\c{c}o Beir\~ao da Veiga 
\and David Mora\thanks{D. Mora was partially supported by CONICYT
(Chile) through FONDECYT project No. 1140791, by DIUBB through project
151408 GI/VC and by Anillo ANANUM, ACT1118, CONICYT (Chile).} 
\and Gonzalo Rivera\thanks{G. Rivera was partially supported by a
CONICYT (Chile) fellowship.}
\and Rodolfo Rodr\'iguez\thanks{R. Rodr\'iguez was partially supported
by BASAL project, CMM, Universidad de Chile, by Anillo ANANUM, ACT1118,
CONICYT (Chile) and by Red Doctoral REDOC.CTA, MINEDUC project UCO1202
at Universidad de Concepci\'on (Chile).}}
\institute{L. Beir\~ao da Veiga 
\at Dipartimento di Matematica e Applicazioni, Universit\`a di
Milano-Bicocca, 20125 Milano, Italy.\\
\email{lourenco.beirao@unimib.it} 
\and D. Mora 
\at GIMNAP, Departamento de Matem\'atica, Universidad del B\'io-B\'io,
Casilla 5-C, Concepci\'on, Chile and Centro de Investigaci\'on en
Ingenier\'\i a Matem\'atica (CI$^2$MA), Universidad de Concepci\'on,
Casilla 160-C, Concepci\'on, Chile.\\
\email{dmora@ubiobio.cl} 
\and G. Rivera \and R. Rodr\'iguez 
\at CI$^2$MA, Departamento de Ingenier\'{\i}a Matem\'atica, Universidad
de Concepci\'on, Casilla 160-C, Concepci\'on, Chile.\\
\email{\{grivera,rodolfo\}@ing-mat.udec.cl}}

\maketitle

\begin{abstract}
We analyze in this paper a virtual element approximation for the
acoustic vibration problem. We consider a variational formulation
relying only on the fluid displacement and propose a discretization by
means of $\H(\div)$ virtual elements with vanishing rotor. Under
standard assumptions on the meshes, we show that the resulting scheme
provides a correct approximation of the spectrum and prove optimal order
error estimates. With this end, we prove approximation properties of the
proposed virtual elements. We also report some numerical tests
supporting our theoretical results.
\end{abstract}

\keywords{Virtual Element Method \and rotor free H(div) elements \and
acoustic vibration problem \and polygonal meshes \and error estimates}
\subclass{65N30 \and 65N25 \and 70J30 \and 76M25}

%-----------------------------------------------------------------------

\section{Introduction}
\label{SEC:INTR}

The {\it Virtual Element Method} (VEM) introduced in \cite{BBCMMR2013}
is a recent generalization of the Finite Element Method which is
characterized by the capability of dealing with very general
polygonal/polyhedral meshes and the possibility to easily implement
highly regular discrete spaces. Indeed, by avoiding the explicit
construction of the local basis functions, the VEM can easily handle
general polygons/polyhedrons without complex integrations on the element
(see \cite{BBMR2014} for details on the coding aspects of the method).
The interest in numerical methods that can make use of general polytopal
meshes has recently undergone a significant growth in the mathematical
and engineering literature; among the large number of papers on this
subject, we cite as a minimal sample
\cite{AHSV,BLMbook2014,CGH14,DPECMAME2015,DPECRAS2015,ST04,TPPM10}.
Regarding the VEM literature, we limit ourselves to the following few
articles
\cite{equiv,ABMV2014,ALM15,BBCMMR2013,BBMR2014,BBMR2015,BLM2015,BBPS2014,ultimo,BM12,Paulino-VEM,MRR2015,PPR15}.

The numerical approximation of eigenvalue problems for partial
differential equations derived from engineering applications, is object
of great interest from both, the practical and theoretical points of
view. We refer to \cite{Boffi,BGG2012} and the references therein for
the state of the art in this subject area. In particular, this paper
focus on the so called acoustic vibration problem; namely, to compute
the vibration modes and the natural frequencies of an inviscid
compressible fluid within a rigid cavity \cite{I98}. One motivation for
considering this problem is that it constitutes a stepping stone towards
the more challenging goal of devising virtual element spectral
approximations for coupled systems involving fluid-structure
interaction, which arises in many engineering problems. The simplest
formulation of this problem is obtained by using pressure variations
which leads to an eigenvalue problem for the Laplace
operator~\cite{I98}. However, for coupled problems, it is convenient to
use a dual formulation in terms of fluid displacements (see
\cite{EM77}). A standard finite element approximation of this problem
leads to spurious modes (see \cite{DI2001}). Such a spectral pollution
can be avoided by using $\H(\div)$-conforming elements, like
Raviart-Thomas finite elements
\cite{BDMR95,B96,BHPR2001,BGG2012,HST2000}. See \cite{BGHRS2008} for a
thorough discussion on this topic.

The aim of this paper is to introduce and analyze an $\H(\div)$ VEM
which applies to general polygonal (even non-convex) meshes for the
two-dimensional acoustic vibration problem. We begin with a variational
formulation of the spectral problem relying only on the fluid
displacement. Then, we propose a discretization based on the mixed VEM
introduced in \cite{BBMR2015} for general second order elliptic
problems. The well-known abstract spectral approximation theory (see
\cite{BO}) cannot be used to deal with the analysis of our problem.
Indeed, the kernel of the bilinear form on the left-hand side of the
variational formulation has in our case an infinite-dimensional kernel.
Although the standard shift strategy allows a solution operator to be
defined, this is not compact and its nontrivial essential spectrum may
in such cases lead to spectral pollution at the discrete level. 
However, by appropriately adapting the abstract spectral approximation
theory for non-compact operators developed in \cite{Raviart1,Raviart2},
under rather mild assumptions on the polygonal meshes, we establish that
the resulting scheme provides a correct approximation of the spectrum
and prove error estimates for the eigenfunctions and a double order for
the eigenvalues. As a by-product, we derive optimal approximation
estimates for $\H(\div)$ virtual elements with vanishing rotor, a result
that could be useful also for other applications. These results and
their corresponding proofs are collected in an appendix.

The outline of this article is as follows: We introduce in
Section~\ref{SEC:STAT} the variational formulation of the acoustic
vibration problem, define a solution operator and establish its spectral
characterization. In Section~\ref{SEC:Discrete}, we introduce the
virtual element discrete formulation, describe the spectrum of a
discrete solution operator and establish some auxiliary results. In
Section~\ref{SEC:approximation}, we prove that the numerical scheme
provides a correct spectral approximation and establish optimal order
error estimates for the eigenvalues and eigenfunctions. In
Section~\ref{SEC:NUMER}, we report a couple of numerical tests that
allow us to assess the convergence properties of the method, to confirm
that it is not polluted with spurious modes and to check that the
experimental rates of convergence agree with the theoretical ones.
Finally, we introduce in an appendix the proofs of the approximation
results for the introduced virtual element interpolant.

Throughout the paper, $\O$ is a generic Lipschitz bounded domain of
$\R^2$. For $s\geq 0$, $\left\|\cdot\right\|_{s,\O}$ stands indistinctly
for the norm of the Hilbertian Sobolev spaces $\HsO$ or $[\HsO]^2$ with
the convention $\H^0(\O):=\LO$. We also define the Hilbert space
$\HdivO:=\left\{\bv\in[\LO]^2:\ \div\bv\in\LO\right\}$, whose norm is
given by $\left\|\bv\right\|^2_{\divO}
:=\left\|\bv\right\|_{0,\O}^2+\left\|\div\bv\right\|^2_{0,\O}$. Finally,
we employ $\0$ to denote a generic null vector and $C$ to denote generic
constants independent of the discretization parameters, which may take
different values at different places.

\section{The spectral problem}
\label{SEC:STAT}

We consider the free vibration problem for an acoustic fluid within a
bounded rigid cavity $\O\subset\R^2$ with polygonal boundary $\G$ and 
outward unit normal vector $\bn$:

\begin{equation*}
\left\{\begin{array}{ll}
\vspace{0.1cm}
-\omega^2\varrho\bw=-\nabla p\quad & \text{in }\O,
\\
\vspace{0.1cm}
p=-\varrho c^2\div\bw\quad & \text{in }\O,
\\
\bw\cdot\bn=0\quad & \text{on }\G,
\end{array}\right.
\label{vibration}
\end{equation*}
where $\bw$ is the fluid displacement, $p$ is the pressure fluctuation,
$\varrho$ the density, $c$ the acoustic speed and $\omega$ the vibration
frequency. Multiplying the first equation above by a test function
$$
\bv\in\bV:=\left\{\bv\in\HdivO:\ \bv\cdot\bn=0\text{ on }\G\right\},
$$ 
integrating by parts, using the boundary condition and eliminating $p$,
we arrive at the following weak formulation in which, for simplicity, we
have taken the physical parameters $\varrho$ and $c$ equal to one and
denote $\l=\omega^2$:

\begin{problem}
\label{P1}
Find $(\l,\bw)\in\R\times\bV$, $\bw\neq 0$, such that 
\begin{equation*}
\label{2}
\int_{\O}\div\bw\div\bv 
=\l\int_{\O}\bw\cdot\bv
\qquad\forall\bv\in\bV.
\end{equation*}
\end{problem}

Since the bilinear form on the left-hand side is not $\HdivO$-elliptic,
it is convenient to use a shift argument to rewrite this eigenvalue
problem in the following equivalent form: 

\begin{problem}
\label{P2}
Find $(\l,\bw)\in\R\times\bV$, $\bw\neq 0$, such that
\begin{equation*}
\label{3}
a(\bw,\bv)=\left(\l+1\right)b(\bw,\bv)\qquad\forall\bv\in\bV,
\end{equation*}
\end{problem}
where the bilinear forms are defined for any $\bw,\bv\in\bV$ by
\begin{align*}
a(\bw,\bv) & :=\int_{\O}\div\bw\div\bv+\int_{\O}\bw\cdot\bv,
\\
b(\bw,\bv) & :=\int_{\O}\bw\cdot\bv.
\end{align*}

We define the solution operator associated with Problem~\ref{P2}:
\begin{align*}
\bT:\;\bV & \longrightarrow\bV,
\\
\bf & \longmapsto\bT\bf:=\bu,
\end{align*}
where $\bu\in\bV$ is the solution of the corresponding source problem:
\begin{equation*}\label{4}
a(\bu,\bv)=b(\bf,\bv)\qquad\forall\bv\in\bV.
\end{equation*}
Since the bilinear form $a(\cdot,\cdot)$ is $\HdivO$-elliptic, the
problem above is well posed. As an immediate consequence, we deduce that
the linear operator $\bT$ is well defined and bounded. Notice that
$(\l,\bw)\in\R\times\bV$ solves Problem~\ref{P1} if and only if
$(1/\left(1+\l\right),\bw)$ is an eigenpair of $\bT$, i.e, if and only
if 
$$
\bT\bw=\mu\bw,\quad\text{ with }\mu:=\dfrac{1}{\l+1}.
$$
Moreover, it is easy to check that $\bT$ is self-adjoint with respect to
the inner products $a(\cdot,\cdot)$ and $b(\cdot,\cdot)$ in $\bV$.

In what follows, we recall some results that can be found in
\cite{BDMR95} in the more general context of fluid-solid vibration
problems. The proofs in \cite{BDMR95} can be readily adapted to this
case to obtain the following results. Let the space
$$
\bK:=\left\{\bv\in\bV:\ \div\bv=0\ \text{in}\ \O\right\}.
$$

\begin{lemma}
\label{muigual1}
The operator $\bT$ admits the eigenvalue $\mu=1$ with associated
eigenspace $\bK$.
\end{lemma}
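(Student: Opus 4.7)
The plan is to prove the two inclusions showing that the $1$-eigenspace of $\bT$ coincides with $\bK$, exploiting the explicit form of the source problem.

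First I would verify the inclusion $\bK \subseteq \ker(\bT - \bI)$. Given $\bf \in \bK$, the candidate $\bu := \bf$ lies in $\bV$ and satisfies $\div \bu = 0$, so for every $\bv \in \bV$
\begin{equation*}
a(\bu,\bv) = \int_\O \div\bu\,\div\bv + \int_\O \bu\cdot\bv = \int_\O \bf\cdot\bv = b(\bf,\bv).
\end{equation*}
By the uniqueness of the solution to the source problem defining $\bT$, this forces $\bT\bf = \bu = \bf$, so $\bf$ is an eigenfunction with eigenvalue $1$.

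For the reverse inclusion, I would assume $\bT\bf = \bf$, i.e.\ $\bu = \bf$ solves $a(\bf,\bv) = b(\bf,\bv)$ for all $\bv \in \bV$. Subtracting the common $b$-term from both sides leaves
\begin{equation*}
\int_\O \div\bf \,\div\bv = 0 \qquad \forall\,\bv\in\bV.
\end{equation*}
Testing with $\bv = \bf \in \bV$ yields $\|\div\bf\|_{0,\O}^2 = 0$, hence $\div\bf = 0$ in $\O$, i.e.\ $\bf \in \bK$. This gives $\ker(\bT - \bI) \subseteq \bK$ and together with the first step completes the characterization of the eigenspace.

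There is no real obstacle here: the argument is just a direct use of the definition of $\bT$ plus one well-chosen test function. The only thing to watch is that $\bf$ itself is an admissible test function (which is automatic since $\bf \in \bV$), so no mollification or density argument is needed.
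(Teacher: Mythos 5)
Your proof is correct and is exactly the standard argument: the paper itself omits the proof (deferring to the reference on fluid--solid vibrations), but its proof of the discrete analogue ($\mu_h=1$ with eigenspace $\bK_h$) uses precisely the same reasoning you give, namely that $\bT\bf=\bf$ is equivalent to $\int_\O\div\bf\,\div\bv=0$ for all $\bv\in\bV$, which upon testing with $\bv=\bf$ characterizes $\bK$. No gaps.
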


The following result provides a simple characterization of the
orthogonal complement of $\bK$ in $\bV$.

\begin{lemma}
\label{ortho}
Let $\bG:=\left\{\nabla q:\ q\in\HuO\right\}$. Then, 
$$
\bV=\bK\oplus\left(\bG\cap\bV\right),
$$
is an orthogonal decomposition in both $[\LO]^2$ and $\,\HdivO$. 

Moreover, there exists $s\in(1/2,1]$ such that, for all $\bv\in\bV$, if 
$\bv=\bphi+\nabla q$ with $\bphi\in\bK$ and $\nabla q\in\bG\cap\bV$,
then $\nabla q\in[\HsO]^2$ and $\left\|\nabla q\right\|_{s,\O}
\leq C\left\|\div\bv\right\|_{0,\O}$.
\end{lemma}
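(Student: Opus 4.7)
The plan is to construct the decomposition via a Neumann problem for the Laplacian and then invoke elliptic regularity on Lipschitz polygonal domains.

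\textbf{Step 1 (Existence of the decomposition).} Given $\bv\in\bV$, I would define $q\in\HuO$ (unique up to an additive constant) as the solution of the Neumann problem
\begin{equation*}
\int_{\O}\nabla q\cdot\nabla r=\int_{\O}\bv\cdot\nabla r\qquad\forall r\in\HuO.
\end{equation*}
The right-hand side is a well-defined continuous linear functional on $\HuO/\R$, and the Lax--Milgram lemma gives existence. Interpreted strongly, this says $\Delta q=\div\bv$ in $\O$ with $\partial_{\bn}q=\bv\cdot\bn=0$ on $\G$ (the compatibility condition $\int_{\O}\div\bv=\int_{\G}\bv\cdot\bn=0$ being automatic). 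Then $\nabla q\in\HdivO$, $\nabla q\cdot\bn=0$ on $\G$, so $\nabla q\in\bG\cap\bV$. Setting $\bphi:=\bv-\nabla q$ gives $\div\bphi=\div\bv-\Delta q=0$ and $\bphi\cdot\bn=0$, hence $\bphi\in\bK$.

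\textbf{Step 2 (Orthogonality).} For any $\bpsi\in\bK$ and any $\nabla r\in\bG\cap\bV$, integration by parts yields
\begin{equation*}
\int_{\O}\bpsi\cdot\nabla r=-\int_{\O}(\div\bpsi)\,r+\int_{\G}(\bpsi\cdot\bn)\,r=0,
\end{equation*}
giving $L^2$-orthogonality. Since $\div\bpsi=0$, the extra $\div$-$\div$ term in the $\HdivO$-inner product also vanishes, so the decomposition is orthogonal in both $[\LO]^2$ and $\HdivO$. Uniqueness follows from the orthogonality.

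\textbf{Step 3 (Regularity estimate).} The function $q$ solves the Neumann problem
\begin{equation*}
\Delta q=\div\bv\ \text{in }\O,\qquad \partial_{\bn}q=0\ \text{on }\G,
\end{equation*}
with $\div\bv\in\LO$. By the classical regularity theory for second-order elliptic problems on Lipschitz polygonal domains (see Grisvard or Dauge), there exists $s\in(1/2,1]$, depending only on the largest reentrant angle of $\O$, such that $q\in\HusO$ with
\begin{equation*}
\left\|q\right\|_{1+s,\O}\leq C\left\|\div\bv\right\|_{0,\O}.
\end{equation*}
Taking the gradient gives $\nabla q\in[\HsO]^2$ and $\left\|\nabla q\right\|_{s,\O}\leq C\left\|\div\bv\right\|_{0,\O}$, as required.

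\textbf{Main obstacle.} The only nontrivial point is Step 3: one must quote the correct Neumann regularity shift on a (possibly non-convex) Lipschitz polygon. For a convex polygon this is the standard $s=1$ result; for a general polygon, $s$ is strictly greater than $1/2$ but strictly less than $1$, determined by the worst interior angle. Steps 1 and 2 are essentially a standard Helmholtz-type decomposition exploiting $\bv\cdot\bn=0$ on $\G$.
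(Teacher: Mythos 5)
Your argument is correct and is essentially the standard one: the paper itself gives no proof of this lemma but refers to \cite{BDMR95}, where precisely this Helmholtz-type decomposition via the homogeneous Neumann problem $\Delta q=\div\bv$, $\partial_{\bn}q=0$, followed by the elliptic shift theorem on polygonal domains, is used. The only point worth being careful about is that the traces $\bv\cdot\bn$ and $\bpsi\cdot\bn$ live in $\H^{-1/2}(\G)$ and the boundary integrals in your Steps 1--2 are duality pairings, but this is routine and does not affect the argument.
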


{}From now on, we fix $s\in (1/2,1]$ such that the above lemma holds
true.

The following result shows that the subspace $\bG\cap\bV$ is invariant
for $\bT$.

\begin{lemma}
\label{invariant}
There holds 
$$
\bT(\bG\cap\bV)\subset\left(\bG\cap\bV\right).
$$
\end{lemma}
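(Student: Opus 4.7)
The plan is to take an arbitrary $\bf\in\bG\cap\bV$, set $\bu:=\bT\bf\in\bV$, decompose $\bu$ via Lemma~\ref{ortho} as $\bu=\bphi+\nabla q$ with $\bphi\in\bK$ and $\nabla q\in\bG\cap\bV$, and then show $\bphi=\0$, which forces $\bu=\nabla q\in\bG\cap\bV$.

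The key step is to test the defining equation $a(\bu,\bv)=b(\bf,\bv)$ with $\bv:=\bphi\in\bK\subset\bV$. On the left, since $\bphi\in\bK$ means $\div\bphi=0$, the divergence piece of $a(\cdot,\cdot)$ vanishes and we are left with
\begin{equation*}
a(\bu,\bphi)=\int_{\O}\bu\cdot\bphi=\int_{\O}(\bphi+\nabla q)\cdot\bphi=\int_{\O}|\bphi|^2,
\end{equation*}
where in the last step I use the $[\LO]^2$-orthogonality $\int_{\O}\nabla q\cdot\bphi=0$ provided by Lemma~\ref{ortho}. On the right, the same orthogonality applied to $\bf\in\bG\cap\bV$ and $\bphi\in\bK$ gives $b(\bf,\bphi)=\int_{\O}\bf\cdot\bphi=0$. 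Combining the two identities yields $\|\bphi\|_{0,\O}^2=0$, so $\bphi=\0$ and the claim follows.

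There is no real obstacle here: the proof is a direct exploitation of the orthogonal Helmholtz-type decomposition of Lemma~\ref{ortho}, together with the fact that the bilinear form $a(\cdot,\cdot)$ reduces to the $[\LO]^2$-inner product on the divergence-free subspace $\bK$. The only point worth stating carefully is that the decomposition from Lemma~\ref{ortho} is orthogonal both in $[\LO]^2$ \emph{and} in $\HdivO$, which is what legitimately allows us to cancel both the divergence contribution in $a(\bu,\bphi)$ and the inner product $b(\bf,\bphi)$.
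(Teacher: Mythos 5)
Your proof is correct and complete: testing the source problem with the divergence-free component $\bphi$ and invoking the $[\LO]^2$-orthogonality of the decomposition in Lemma~\ref{ortho} immediately forces $\bphi=\0$. The paper itself omits the proof, deferring to the fluid--solid analysis of Berm\'udez et al., but your argument is exactly the standard adaptation one would write down, so there is nothing further to add.
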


Smoothing properties of $\bT$ as an operator from $\bG\cap\bV$ into
itself are established in what follows.

\begin{theorem}
\label{teo2.5}
There holds
$$
\bT(\bG\cap\bV)\subset\left\{\bv\in[\HsO]^2:\ \div\bv\in\HuO\right\}
$$
and there exists $C>0$ such that, for all $\bf\in\bG\cap\bV$, if
$\bu=\bT\bf$, then
\begin{equation*}
\left\|\bu\right\|_{s,\O}
+\left\|\div\bu\right\|_{1,\O}
\leq C\left\|\bf\right\|_{\divO}.
\end{equation*}
Consequently, the operator $\bT|_{\bG\cap\bV}:\ \bG\cap\bV\to\bG\cap\bV$
is compact.
\end{theorem}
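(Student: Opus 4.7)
The plan is to exploit two ingredients: the invariance $\bT(\bG\cap\bV)\subset\bG\cap\bV$ from Lemma~\ref{invariant}, and the regularity shift $\bu\in[\HsO]^2$ that comes from Lemma~\ref{ortho}. The key step is to produce extra regularity for $\div\bu$, and for that I would extract a strong form of the source equation.

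First I would observe that, since $[C_c^\infty(\O)]^2\subset\bV$, the identity $a(\bu,\bv)=b(\bf,\bv)$ holds in particular for all compactly supported smooth test fields. Integration by parts on $\int_\O \div\bu\,\div\bv$ yields, in the sense of distributions on $\O$,
\begin{equation*}
-\nabla(\div\bu)+\bu=\bf.
\end{equation*}
Since $\bu,\bf\in[\LO]^2$, this identity forces $\nabla(\div\bu)\in[\LO]^2$, hence $\div\bu\in\HuO$, with
\begin{equation*}
\left\|\div\bu\right\|_{1,\O}
\le\left\|\div\bu\right\|_{0,\O}+\left\|\nabla(\div\bu)\right\|_{0,\O}
\le\left\|\div\bu\right\|_{0,\O}+\left\|\bu\right\|_{0,\O}+\left\|\bf\right\|_{0,\O}
\le C\left\|\bf\right\|_{\divO},
\end{equation*}
where the last bound comes from the boundedness of $\bT$ (continuous dependence in the $a$-elliptic source problem). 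Next, by Lemma~\ref{invariant}, $\bu\in\bG\cap\bV$, so the decomposition of Lemma~\ref{ortho} applied to $\bu$ itself is trivial ($\bphi=\0$, $\nabla q=\bu$), and the lemma yields $\bu\in[\HsO]^2$ with $\left\|\bu\right\|_{s,\O}\le C\left\|\div\bu\right\|_{0,\O}\le C\left\|\bf\right\|_{\divO}$. Combining the two bounds gives the stated estimate.

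For the compactness claim, note that I have just shown that $\bT|_{\bG\cap\bV}$ maps $\bG\cap\bV$ continuously into the space $\bX:=\left\{\bv\in[\HsO]^2:\ \div\bv\in\HuO\right\}$ endowed with the graph norm. By the Rellich--Kondrachov theorem, $\HsO\hookrightarrow\LO$ and $\HuO\hookrightarrow\LO$ are compact embeddings (recall $s>1/2$), so $\bX\hookrightarrow\HdivO$ is compact. Hence $\bT|_{\bG\cap\bV}$, viewed as an operator on $\bG\cap\bV$, factors as a bounded map into $\bX$ followed by a compact embedding, and is therefore compact.

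The only place where one must be slightly careful is the passage from the weak identity to the distributional PDE: the crux is that test fields supported away from $\G$ ignore the boundary condition built into $\bV$, and that divergence and the $\L^2$ pairing with vector fields commute with the distributional derivative in the expected way. Beyond that, the argument is essentially an application of the two already-quoted lemmas together with the elliptic bound for the source problem, so I do not anticipate any serious obstacle.
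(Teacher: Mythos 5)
Your proof is correct, and it is essentially the argument the paper intends: the paper omits the proof and defers to the adaptation of \cite{BDMR95}, where the same steps appear — testing against $[C_c^\infty(\O)]^2$ to obtain the strong equation $-\nabla(\div\bu)+\bu=\boldsymbol{f}$ (hence $\div\bu\in\HuO$ with the stated bound), invoking the invariance of $\bG\cap\bV$ together with the regularity estimate of Lemma~\ref{ortho} for the $[\HsO]^2$ bound on $\bu$, and concluding compactness from the compact embedding of $\left\{\bv\in[\HsO]^2:\ \div\bv\in\HuO\right\}$ into $\HdivO$. Your handling of the two delicate points (uniqueness of the orthogonal decomposition so that $\bphi=\0$ when $\bu\in\bG\cap\bV$, and the passage from the weak identity to the distributional PDE) is sound.
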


Finally, the following result provides a spectral characterization of
$\bT$.

\begin{theorem}
\label{CHAR_SP}
The spectrum of $\bT$ decomposes as
$\sp(\bT)=\left\{0,1\right\}\cup\left\{\mu_k\right\}_{k\in\N}$, where:
\begin{enumerate}
\item[\textit{i)}] $\mu=1$ is an infinite-multiplicity eigenvalue of 
$\bT$ and its associated eigenspace is $\bK$;
\item[\textit{ii)}] $\left\{\mu_k\right\}_{k\in\N}\subset(0,1)$ is a
sequence of finite-multiplicity eigenvalues of $\bT$ which converge to 
$0$ and if $\bw$ is an eigenfunction of $\bT$ associated with such an
eigenvalue, then there exists $\tilde{s}>1/2$ and $C>0$, both depending
on the eigenvalue, such that
$$
\left\|\bw\right\|_{\tilde{s},\O}
+\left\|\div\bw\right\|_{1+\tilde{s},\O}
\leq C\left\|\bw\right\|_{\divO};
$$
\item[\textit{iii)}] $\mu=0$ is not an eigenvalue of $\bT$.
\end{enumerate}
\end{theorem}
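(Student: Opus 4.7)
The strategy is to reduce the spectral analysis of $\bT$ to its action on the two factors of the $a$-orthogonal decomposition $\bV=\bK\oplus(\bG\cap\bV)$ provided by Lemma~\ref{ortho}. By Lemma~\ref{muigual1} the restriction of $\bT$ to $\bK$ is the identity, and by Lemma~\ref{invariant} the subspace $\bG\cap\bV$ is $\bT$-invariant; hence
$\sp(\bT)=\{1\}\cup\sp(\bT|_{\bG\cap\bV})$,
and the three items of the theorem reduce to statements about these two restrictions.

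For (i), Lemma~\ref{muigual1} already asserts that $\bK$ is the eigenspace associated with $\mu=1$, and $\bK$ is infinite-dimensional since it contains every field of the form $(\partial_2\psi,-\partial_1\psi)$ with $\psi\in\H^1_0(\O)$. Should an extra verification that no vector outside $\bK$ is fixed by $\bT$ be desired, it follows from the Rayleigh identity $\mu\,a(\bw,\bw)=a(\bT\bw,\bw)=b(\bw,\bw)$, which forces $\mu=1$ exactly when $\|\div\bw\|_{0,\O}=0$, i.e.\ exactly when $\bw\in\bK$.

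For (ii), Theorem~\ref{teo2.5} gives compactness of $\bT|_{\bG\cap\bV}$, and self-adjointness with respect to the $a$-inner product is inherited from that of $\bT$ on $\bV$. The spectral theorem for compact self-adjoint operators, applied on the infinite-dimensional Hilbert space $\bG\cap\bV$, then furnishes $0$ together with a sequence $\{\mu_k\}_{k\in\N}$ of finite-multiplicity real eigenvalues accumulating only at $0$; the localization $\mu_k\in(0,1)$ comes again from the Rayleigh formula, since for a nonzero $\bw\in\bG\cap\bV$ one has both $\|\bw\|_{0,\O}>0$ and $\div\bw\neq 0$ (otherwise $\bw\in\bK\cap(\bG\cap\bV)=\{\0\}$). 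The regularity estimate is the most delicate point and I would address it by a one-step bootstrap: from $\bw=\mu_k^{-1}\bT\bw$ and Theorem~\ref{teo2.5} I obtain $\bw\in[\HsO]^2$ and $\div\bw\in\HuO$; testing the weak eigenvalue equation against $[\mathcal{D}(\O)]^2$ fields and integrating by parts produces the pointwise identity $-\nabla(\div\bw)=(\mu_k^{-1}-1)\bw$, so that the improved regularity of $\bw$ lifts $\div\bw$ from $\HuO$ to $\H^{1+s}(\O)$. Taking $\tilde s:=s\in(1/2,1]$ then closes the estimate, with constants depending on $\mu_k$ through the factor $(\mu_k^{-1}-1)$.

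Finally, (iii) is immediate: if $\bT\bw=\0$, testing the defining source relation $a(\bT\bw,\bv)=b(\bw,\bv)$ with $\bv=\bw$ gives $\|\bw\|_{0,\O}^2=0$, hence $\bw=\0$. The step I expect to be the principal obstacle is the regularity bootstrap, specifically checking that $\div\bw\in\HuO$ combined with $\nabla(\div\bw)\in[\HsO]^2$ indeed promotes $\div\bw$ to $\H^{1+s}(\O)$ globally on $\O$; everything else reduces to the invariant decomposition, compactness, self-adjointness, and a Rayleigh-quotient computation.
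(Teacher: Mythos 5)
Your proposal is correct and follows essentially the route the paper intends: the paper gives no proof of Theorem~\ref{CHAR_SP} (it defers to \cite{BDMR95}), but Lemmas~\ref{muigual1}--\ref{invariant} and Theorem~\ref{teo2.5} are set up precisely as the ingredients you assemble — reduction to the invariant splitting $\bV=\bK\oplus(\bG\cap\bV)$, the spectral theorem for the compact self-adjoint restriction, and Rayleigh-quotient localization of the eigenvalues in $(0,1)$. The only remark worth making is that your bootstrap via $-\nabla(\div\bw)=(\mu_k^{-1}-1)\bw$ yields the generic index $\tilde s=s$ of Lemma~\ref{ortho}, whereas the eigenvalue-dependent $\tilde s$ alluded to in the statement (obtained in \cite{BDMR95} from Neumann--Laplacian regularity of $\div\bw$) may be larger; since the theorem only asserts the existence of some $\tilde s>1/2$, your argument suffices as written.
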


\section{The virtual elements discretization}
\label{SEC:Discrete}

We begin this section, by recalling the mesh construction and the
assumptions considered to introduce a discrete virtual element space.
Then, we will introduce a virtual element discretization of
Problem~\ref{P1} and provide a spectral characterization of the
resulting discrete eigenvalue problem. Let $\left\{\CT_h\right\}$ be a
family of decompositions of $\O$ into polygons $E$. Let $h_E$ denote
the diameter of the element $E$ and $h:=\max_{E\in\O}h_E$.

For the analysis, we make the following assumptions on the meshes as in
\cite{BBMR2015,ultimo}: there exists a positive real number $C_{\T}$
such that, for every $E\in\T_h$ and for every $\CT_h$,
\begin{itemize}
\item $\mathbf{A_1}$: the ratio between the shortest edge and the
diameter of $E$ is larger than $C_{\T}$;
\\[-.3cm]
\item $\mathbf{A_2}$: $E$ is star-shaped with respect to every
point of a ball of radius $C_{\T}h_E$.
\end{itemize}

For any subset $S\subseteq\R^2$ and any non-negative integer $k$, we
indicate by $\bbP_k(S)$ the space of polynomials of degree up to $k$
defined on $S$. To keep the notation simpler, we denote by $\bn$ a
generic normal unit vector; in each case, its precise definition will be
clear from the context. We consider now a polygon $E$ and, for any fixed
non-negative integer $k$, we define the following finite dimensional
space (inspired in \cite{ultimo,BBMR2015}):
$$
\bV_{h}^{E}:=\Big\{\bv_h\in\H(\div;E):
\ \left(\bv_h\cdot\bn\right)\in\bbP_k(e)
\ \,\forall e\subset\partial E,\ \div\bv_h\in\bbP_k(E),
\ \rot\bv_h=0\text{ in }E\Big\}.
$$

\begin{remark}
\label{obs1}
It is elementary to check that a vector field $\bv_h\in\bV_h^E$
satisfying $\bv_h\cdot\bn=0$ on $\partial E$ and $\div\bv_h=0$ in $E$
is identically zero. In fact, since a star-shaped polygon $E$ is simply
connected and $\rot\bv_h=0$ in $E$, there exists $\gamma\in\HuE$ such
that $\bv_h=\nabla\gamma$. Then, $\Delta\gamma=\div\bv_h=0$ in $E$ and
$\partial\gamma/\partial\bn=\bv_h\cdot\bn=0$ on $\partial E$. Hence,
$\bv_h=\nabla\gamma=\0$ in $E$. This implies that $\bV_h^E$ is finite
dimensional, the dimension being less or equal to
$N_E\left(k+1\right)+\left(k+1\right)\left(k+2\right)/2-1$, where $N_E$
is the number of edges of $E$.
\end{remark}

We define the following degrees of freedom for functions $\bv_h$ in
$\bV_h^E$:
\begin{align}
\label{freedom}
\int_{e}\left(\bv_h\cdot\bn\right)q\ds
& \qquad\forall q\in\bbP_k(e),
\quad\forall\text{ edge }e\subset\partial E;
\\
\label{freedom2}
\int_{E}\bv_h\cdot\nabla q
& \qquad\forall q\in\bbP_k(E)/\bbP_{0}(E).
\end{align}

\begin{proposition}
The degrees of freedom \eqref{freedom}--\eqref{freedom2} are unisolvent
in $\bV_h^E$.
\end{proposition}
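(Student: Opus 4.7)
The plan is to verify unisolvence by a standard two-step argument. First, I note that the total number of prescribed degrees of freedom equals
\[
N_E(k+1)+\frac{(k+1)(k+2)}{2}-1,
\]
which matches exactly the upper bound on $\dim\bV_h^E$ established in Remark~\ref{obs1}. Therefore it suffices to prove injectivity of the degree-of-freedom map: if $\bv_h\in\bV_h^E$ has all values \eqref{freedom}--\eqref{freedom2} equal to zero, then $\bv_h=\0$. Combined with the dimension bound, this will simultaneously yield $\dim\bV_h^E=N_E(k+1)+(k+1)(k+2)/2-1$.

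So let $\bv_h\in\bV_h^E$ annihilate all the proposed degrees of freedom. First, from \eqref{freedom}, since by definition $\bv_h\cdot\bn|_e\in\bbP_k(e)$, testing against $q=\bv_h\cdot\bn|_e\in\bbP_k(e)$ gives $\bv_h\cdot\bn=0$ on every edge $e\subset\partial E$, and hence on the whole of $\partial E$.

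Next I would exploit \eqref{freedom2} via integration by parts: for every $q\in\bbP_k(E)$ (noting that the functional depends only on the class of $q$ modulo constants, so we may pick any representative),
\[
0=\int_E \bv_h\cdot\nabla q
=-\int_E(\div\bv_h)\,q+\int_{\partial E}(\bv_h\cdot\bn)\,q\ds
=-\int_E(\div\bv_h)\,q,
\]
where the boundary term vanishes by the previous step. The identity $\int_E(\div\bv_h)\,q=0$ holds, in particular, for every $q\in\bbP_k(E)$ with zero mean. Since $\div\bv_h\in\bbP_k(E)$, decomposing $\div\bv_h=c_0+r$ into its mean $c_0$ and its zero-mean part $r\in\bbP_k(E)$, and choosing $q=r$, I obtain $\int_E r^2=0$, hence $r\equiv 0$ and $\div\bv_h\equiv c_0$ is constant. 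Applying the divergence theorem,
\[
c_0|E|=\int_E\div\bv_h=\int_{\partial E}\bv_h\cdot\bn\ds=0,
\]
so $c_0=0$ and $\div\bv_h=0$ in $E$.

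At this stage $\bv_h\in\bV_h^E$ satisfies $\bv_h\cdot\bn=0$ on $\partial E$ and $\div\bv_h=0$ in $E$, and by construction $\rot\bv_h=0$; the argument recalled in Remark~\ref{obs1} then yields $\bv_h=\0$. I do not expect any genuine obstacle in this proof: the only delicate point is keeping straight the use of the quotient space $\bbP_k(E)/\bbP_0(E)$ and the compatibility condition coming from the divergence theorem, which forces $\div\bv_h$ to have zero mean and thus closes the argument on the constant mode that \eqref{freedom2} alone does not see.
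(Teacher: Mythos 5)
Your injectivity argument is correct and is essentially the paper's proof with the two steps interchanged: you first kill $\bv_h\cdot\bn$ on $\partial E$ by testing \eqref{freedom} against $q=\bv_h\cdot\bn|_e$, then kill $\div\bv_h$ via integration by parts, and finish with Remark~\ref{obs1}; the paper tests first against $q=\div\bv_h$ (using both families of degrees of freedom at once) and then against $q=\bv_h\cdot\bn$. Your extra care with the quotient $\bbP_k(E)/\bbP_0(E)$ (splitting $\div\bv_h$ into mean and zero-mean parts and recovering the constant mode from the divergence theorem) is harmless but unnecessary, since $\int_E\bv_h\cdot\nabla q$ is insensitive to adding constants to $q$, so your displayed identity already gives $\int_E(\div\bv_h)\,q=0$ for \emph{all} $q\in\bbP_k(E)$ and one may take $q=\div\bv_h$ directly.

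The one genuine slip is in your framing of the dimension count. You claim that injectivity of the degree-of-freedom map, \emph{combined with the upper bound} $\dim\bV_h^E\le N_E(k+1)+(k+1)(k+2)/2-1$ from Remark~\ref{obs1}, yields equality of the dimension with the number of degrees of freedom. It does not: injectivity only gives $\dim\bV_h^E\le N$, i.e., the same inequality again, and two upper bounds cannot produce the lower bound needed for surjectivity (consider a proper subspace on which any functional system is injective). What is actually required is $\dim\bV_h^E\ge N$, which one obtains by exhibiting, for every compatible choice of polynomial data $g\in\bbP_k(e)$ on the edges and $f\in\bbP_k(E)$ with $\int_E f=\int_{\partial E}g\ds$, an element of $\bV_h^E$ realizing it — namely $\nabla\gamma$ with $\gamma$ the solution of the Neumann problem $\Delta\gamma=f$, $\partial\gamma/\partial\bn=g$, exactly the device used in Remark~\ref{obs1} and in the appendix to build the dual basis. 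To be fair, the paper itself dispatches this point with ``it is easy to check that the number of degrees of freedom equals the dimension of $\bV_h^E$'', but your version replaces a missing argument with an incorrect one, so you should either supply the Neumann-problem construction or at least not assert that the equality follows from injectivity and the upper bound.
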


\begin{proof}
It is easy to check that the number of degrees of freedom
\eqref{freedom}--\eqref{freedom2} equals the dimension of $\bV_h^E$.
Thus, we only need to show that if $\bv_h$ in $\bV_h^E$ is such that
\begin{align*}
\int_{e}\left(\bv_h\cdot\bn\right)q\ds
& =0\qquad\forall q\in\bbP_k(e),
\quad\forall\text{ edge }e\subset\partial E,
\\
\int_{E}\bv_h\cdot\nabla q
& =0\qquad\forall q\in\bbP_k(E)/\bbP_{0}(E),
\end{align*}
then $\bv_h=\0$. Since $\div\bv_h\in\bbP_k(E)$, by taking
$q:=\div\bv_h$ above, we have
$$
\int_{E}\left(\div\bv_h\right)^2
=\int_{E}\div\bv_h\,q
=-\int_{E}\bv_h\cdot\nabla q
+\int_{\partial E}\left(\bv_h\cdot\bn\right) q\ds=0.
$$
Then, $\div\bv_{h}=0$. Similarly, for each edge $e\subset\partial E$,
since $\bv_h\cdot\bn\in\bbP_k(e)$, by taking $q:=\bv_h\cdot\bn$ we
obtain 
$$
\int_{e}\left(\bv_h\cdot\bn\right)^{2}\ds=0.
$$
Hence, $\bv_h\cdot\bn=0$ on $\partial E$. Therefore, according to
Remark~\ref{obs1}, $\bv_h=\0$ in $E$.
$\hfill\qed$
\end{proof}

\begin{remark}
\label{freedom3}
For the degrees of freedom \eqref{freedom2}, we could integrate by parts
and substitute them with
\begin{equation*}
\int_E\div\bv_h\;q\qquad\forall q\in\bbP_k(E)/\bbP_{0}(E).
\end{equation*}
Needless to say, certain degrees of freedom will be more convenient when
writing the code and the others might be more convenient when writing a
proof.
\end{remark}

For each decomposition $\CT_h$ of $\O$ into polygons $E$, we define
\begin{align}
\bV_h:=\left\{\bv_h\in\bV:\ \bv_h|_E\in\bV_h^E\right\}.
\nonumber
\end{align}
In agreement with the local choice, we choose the following global
degrees of freedom:
\begin{align*}
\label{globalfreedom}
\int_{e}\left(\bv_h\cdot\bn\right)q\ds
& \qquad\forall q\in\bbP_k(e),
\quad\text{for each internal edge }e\not\subset\G;
\\
\int_{E}\bv_h\cdot\nabla q
& \qquad\forall q\in\bbP_k(E)/\bbP_{0}(E),
\quad\text{for each element }E\in\CT_h.
\\
\end{align*}

\begin{remark}
The number of internal degrees of freedom of the Virtual Element Method
here considered ($VEM_k$) is in general less than that of standard
finite elements of the same order such as Raviart-Thomas ($RT_k$) or
Brezzi-Douglas-Marini ($BDM_k$) elements, while the number of degrees of
freedom per edge is the same. A count of the internal degrees of freedom
gives
$$
RT_k:\ k\left(k+1\right), 
\qquad BDM_k:\ \left(k+1\right)\left(k-1\right), 
\qquad VEM_k:\ \left(k+1\right)\left(k+2\right)/2-1.
$$
The proposed family may therefore be preferable to more standard finite
elements even in the case of triangular meshes, especially for
moderate-to-high values of $k$. 
\end{remark}

In order to construct the discrete scheme, we need some preliminary
definitions. First, we split the bilinear form $a(\cdot,\cdot)$
introduced in the previous section as follows:
$$
a(\bu_h,\bv_h)
=\sum_{E\in\CT_h}\left(\int_{E}\div\bu_h\div\bv_h
+\int_{E}\bu_h\cdot\bv_h\right),
\qquad\bu_h,\bv_h\in\bV_h.
$$
The local matrices associated with the first term on the right hand side
above are easily computable since $\div\bu_h$ and $\div\bv_h$ are
polynomials in each element. We explicitly point out that, as can be
seem from \eqref{freedom}--\eqref{freedom2}, the divergence of any
vector $\bv_h\in\bV_h$ can be easily computed from knowledge of the
degrees of freedom of $\bv_h$. Instead, for the local matrices
associated with the second term on the right hand side above, we must
take into account that, due to the implicit space definition, it is not
possible to compute exactly the integrals. Because of this, we will use
an approximation of them. The final output will be a local matrix on
each element $E$ whose associated bilinear form is exact whenever one of
the two entries is a gradient of a polynomial of degree $k+1$. This will
allow us to retain the optimal approximation properties of the space
$\bV_h$. With this aim, we define first for each element $E$ the space
\begin{equation*}
\label{Ve}
\widehat{\bV}_h^E
:=\nabla(\bbP_{k+1}(E))
\subset\bV_{h}^{E}.
\end{equation*}
Then, we define the $[\L^2(E)]^2$-orthogonal projector
$\bpi_h^E:\;[\L^2(E)]^2\longrightarrow\widehat{\bV}_h^E$ by
\begin{equation}
\label{numero}
\int_{E}\bpi_h^E\bv\cdot\widehat{\bu}_h
=\int_{E}\bv\cdot\widehat{\bu}_h
\qquad\forall\widehat{\bu}_h\in\widehat{\bV}_h^E.
\end{equation}
We point out that $\bpi_h^E\bv_h$ is explicitly computable for every
$\bv_h\in\bV_h^E$ using only its degrees of freedom
\eqref{freedom}--\eqref{freedom2}. In fact, it is easy to check that for
all $\bv_h\in\bV_h^E$ and for all $ q\in\bbP_{k+1}(E)$,
\begin{equation*}
\int_{E}\bpi_h^E\bv_h\cdot\nabla q
=\int_{E}\bv_h\cdot\nabla q
=-\int_{E}\div\bv_h\,q
+\int_{\partial E}\left(\bv_h\cdot\bn\right)q\ds.
\end{equation*}

\begin{remark}
In particular, for $k=0$, for all $\bv_h\in\bV_h^E$ and for all
$q\in\bbP_{1}(E)$, we have that
$$
\int_{E}\bpi_h^E\bv_h\cdot\nabla q
=-\left(\dfrac{1}{\left|E\right|}
\sum_{e\subset\partial E}\int_{e}\bv_h\cdot\bn\ds\right)
\left(\int_{E}q\right)
+\sum_{e\subset\partial E}\int_{e}\left(\bv_h\cdot\bn\right)q\ds.
$$
\end{remark}

On the other hand, let $S^E(\cdot,\cdot)$ be any symmetric positive
definite (and computable) bilinear form to be chosen as to satisfy
\begin{equation}
\label{20}
c_0\,\int_{E}\bv_h\cdot\bv_h\leq S^{E}(\bv_h,\bv_h)
\leq c_1\,\int_{E}\bv_h\cdot\bv_h
\qquad\forall\bv_h\in\bV_h^E,
\end{equation}
for some positive constants $c_0$ and $c_1$ depending only on the 
constant $C_{\T}$ from mesh assumptions $\mathbf{A_1}$ and
$\mathbf{A_2}$. Then, we define on each element $E$ the bilinear form
\begin{equation}
\label{21}
b_h^{E}(\bu_h,\bv_h)
:=\int_{E}\bpi_h^E\bu_h\cdot\bpi_h^E\bv_h
+S^{E}\big(\bu_h-\bpi_h^E\bu_h,\bv_h-\bpi_h^E\bv_h\big),
\qquad\bu_h,\bv_h\in\bV_h^E, 
\end{equation}
and, in a natural way,
$$
b_h(\bu_h,\bv_h)
:=\sum_{E\in\CT_h}b_h^{E}(\bu_h,\bv_h),
\qquad\bu_h,\bv_h\in\bV_h.
$$
The following two properties of the bilinear form $b_h^E(\cdot,\cdot)$
are easily derived by repeating in our case the arguments from
\cite[Proposition~4.1]{ultimo}.
\begin{itemize}
\item \textit{Consistency}: 
\begin{equation}
\label{consistencia1}
b_h^{E}(\widehat{\bu}_h,\bv_h)
=\int_{E}\widehat{\bu}_h\cdot\bv_h
\qquad\forall\widehat{\bu}_h\in\widehat{\bV}_h^E,
\quad\forall\bv_h\in\bV_h^E,\quad\forall E\in\CT_h.
\end{equation}
\item \textit{Stability}: There exist two positive constants $\alpha_*$
and $\alpha^*$, independent of $E$, such that:
\begin{equation}
\label{consistencia2}
\alpha_*\int_{E}\bv_h\cdot\bv_h
\leq b_h^{E}(\bv_h,\bv_h)
\leq\alpha^*\int_{E}\bv_h\cdot\bv_h
\qquad\forall\bv_h\in\bV_h^E,\quad\forall E\in\CT_h. 
\end{equation}
\end{itemize}

Now, we are in a position to write the virtual element discretization of
Problem~\ref{P1}.

\begin{problem}
\label{P3}
Find $(\l_h,\bw_h)\in\R\times\bV_h$, $\bw_h\neq 0$, such that 
\begin{equation*}
\label{vp}
\int_{\O}\div\bw_h\div\bv_h
=\l_h b_h(\bw_h,\bv_h)
\qquad\forall\bv_h\in\bV_h.
\end{equation*}
\end{problem}

We use again a shift argument to rewrite this discrete eigenvalue
problem in the following convenient equivalent form.

\begin{problem}
\label{P4}
Find $(\l_h,\bw_h)\in\R\times\bV_h$, $\bw_h\neq 0$, such that
\begin{equation*}
\label{vp*2}
a_h(\bw_h,\bv_h)
=\left(\l_h+1\right)b_h(\bw_h,\bv_h)
\qquad\forall\bv_h\in\bV_h,
\end{equation*}
\end{problem}
where
$$
a_h(\bw_h,\bv_h)
:=\int_{\O}\div\bw_h\div\bv_h
+b_h(\bw_h,\bv_h)
\qquad\forall\bw_h,\bv_h\in\bV_h.
$$
 
We observe that by virtue of \eqref{consistencia2}, the bilinear form
$a_h(\cdot,\cdot)$ is bounded. Moreover, as is shown in the following
lemma, it is also uniformly elliptic.

\begin{lemma}
\label{ha-elipt-disc}
There exists a constant $\beta>0$, independent of $h$, such that
$$
a_h(\bv_h,\bv_h)
\ge\beta\left\|\bv_h\right\|_{\divO}^2
\qquad\forall\bv_h\in\bV_h.
$$
\end{lemma}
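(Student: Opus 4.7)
The plan is to exploit the stability bound \eqref{consistencia2} on $b_h^E(\cdot,\cdot)$ together with the fact that the divergence term in $a_h(\cdot,\cdot)$ is computed exactly (it contributes $\|\div\bv_h\|_{0,\O}^2$ with constant $1$). No compactness or inf--sup argument is needed; the ellipticity will drop out directly after summing the local stability estimates.

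First, I would observe that by summing the lower bound in \eqref{consistencia2} over all $E\in\CT_h$ one obtains
$$
b_h(\bv_h,\bv_h)=\sum_{E\in\CT_h}b_h^E(\bv_h,\bv_h)
\ge \alpha_*\sum_{E\in\CT_h}\int_E \bv_h\cdot\bv_h
=\alpha_*\left\|\bv_h\right\|_{0,\O}^{2}
\qquad\forall\bv_h\in\bV_h.
$$
On the other hand, since $\div\bv_h|_E\in\bbP_k(E)$ is genuinely integrable and the first contribution in $a_h(\cdot,\cdot)$ coincides with $(\div\bv_h,\div\bv_h)_{0,\O}$, we have
$$
a_h(\bv_h,\bv_h)
=\left\|\div\bv_h\right\|_{0,\O}^{2}+b_h(\bv_h,\bv_h).
$$

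Combining these two displays yields
$$
a_h(\bv_h,\bv_h)\ge
\left\|\div\bv_h\right\|_{0,\O}^{2}
+\alpha_*\left\|\bv_h\right\|_{0,\O}^{2}
\ge\min\{1,\alpha_*\}\left\|\bv_h\right\|_{\divO}^{2},
$$
so the stated bound holds with $\beta:=\min\{1,\alpha_*\}$, which is independent of $h$ because $\alpha_*$ depends only on the mesh regularity constant $C_{\T}$.

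There is essentially no obstacle here: the only nontrivial ingredient is the uniform $\L^2$-stability of $b_h^E(\cdot,\cdot)$, which has already been recorded in \eqref{consistencia2}. The exactness of the divergence term in $a_h(\cdot,\cdot)$ is what makes the argument trivial, in contrast to more classical VEM ellipticity proofs for problems where the principal part itself requires a stabilization.
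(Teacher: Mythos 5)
Your proof is correct and follows exactly the same route as the paper, which simply invokes the stability bound \eqref{consistencia2} and takes $\beta=\min\{\alpha_*,1\}$. You have merely written out the one-line argument in full detail.
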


\begin{proof}
Thanks to \eqref{consistencia2}, the above inequality holds with
$\beta:=\min\left\{\alpha_{*},1\right\}$.
$\hfill\qed$
\end{proof}

The next step is to introduce the discrete version of the operator
$\bT$:
\begin{align*}
\bT_h:\;\bV_h & \longrightarrow\bV_h,
\\
\bf_h & \longmapsto\bT_h\bf_h:=\bu_h,
\end{align*}
where $\bu_h\in\bV_h$ is the solution of the corresponding discrete
source problem:
\begin{equation*}
\label{T2}
a_h(\bu_h,\bv_h)=b_h(\bf_h,\bv_h)\qquad\forall\bv_h\in\bV_h.
\end{equation*}
We deduce from Lemma~\ref{ha-elipt-disc}, \eqref{consistencia2} and the
Lax-Milgram Theorem, that the linear operator $\bT_h$ is well defined
and bounded uniformly with respect to $h$.

Once more, as in the continuous case, $(\l_h,\bw_h)$ solves
Problem~\ref{P3} if and only if $(1/(1+\l_h),\bw_h)$ is an eigenpair of
$\bT_h$, i.e, if and only if 
$$
\bT_h\bw_h=\mu_h\bw_h,
\quad\text{ with }\mu_h:=\dfrac{1}{\l_h+1}.
$$
Moreover, it is easy to check that $\bT_h$ is self-adjoint with respect
to $a_h(\cdot,\cdot)$ and $b_h(\cdot,\cdot)$. To describe the spectrum
of this operator, we proceed as in the continuous case and decompose
$\bV_h$ into a convenient direct sum. To this end, we define
$$
\bK_h:=\bV_h\cap\bK
=\left\{\bv_h\in\bV_h:\ \div\bv_h=0\text{ in }\O\right\}
$$
and notice that, here again,
$\bT_h|_{\bK_h}:\;\bK_h\longrightarrow\bK_h$ reduces to the identity.
Moreover, we have the following result.

\begin{proposition}
$\mu_h=1$ is an eigenvalue of $\bT_h$ and its eigenspace is $\bK_h$.
\end{proposition}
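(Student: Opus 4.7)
The strategy is to prove the two inclusions separately, mirroring the argument behind Lemma~\ref{muigual1} at the discrete level. That is, I will show (a) every $\bv_h\in\bK_h$ satisfies $\bT_h\bv_h=\bv_h$, and (b) every fixed point $\bv_h$ of $\bT_h$ lies in $\bK_h$. Both parts use only the definition of $\bT_h$, the splitting $a_h=\int_\O\div\cdot\div+b_h$, and the stability estimate \eqref{consistencia2}, so no approximation or regularity tool is needed here.

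For inclusion (a), fix $\bv_h\in\bK_h$ and set $\bu_h:=\bT_h\bv_h$. The defining identity reads
\[
\int_\O\div\bu_h\,\div\bw_h+b_h(\bu_h,\bw_h)=b_h(\bv_h,\bw_h)\qquad\forall\bw_h\in\bV_h,
\]
so that $\int_\O\div\bu_h\,\div\bw_h+b_h(\bu_h-\bv_h,\bw_h)=0$. Taking $\bw_h:=\bu_h-\bv_h$ and using $\div\bv_h=0$ (hence $\div(\bu_h-\bv_h)=\div\bu_h$) yields
\[
\int_\O(\div\bu_h)^2+b_h(\bu_h-\bv_h,\bu_h-\bv_h)=0.
\]
The left lower bound in \eqref{consistencia2} makes $b_h(\cdot,\cdot)$ positive definite on $\bV_h$, so both terms vanish; in particular $\bu_h=\bv_h$, proving $\bT_h\bv_h=\bv_h$.

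For inclusion (b), suppose $\bT_h\bv_h=\bv_h$. Then $a_h(\bv_h,\bw_h)=b_h(\bv_h,\bw_h)$ for all $\bw_h\in\bV_h$, and the $b_h$-terms cancel, leaving $\int_\O\div\bv_h\,\div\bw_h=0$ for all $\bw_h\in\bV_h$. Choosing $\bw_h:=\bv_h$ gives $\int_\O(\div\bv_h)^2=0$, hence $\div\bv_h=0$ in $\O$, i.e.\ $\bv_h\in\bK_h$. Combining (a) and (b), $\bK_h$ is exactly the eigenspace of $\bT_h$ associated with $\mu_h=1$, and since $\bK_h\neq\{\0\}$ on generic meshes (it contains, for instance, all elements of $\bV_h$ with vanishing divergence), $\mu_h=1$ is indeed an eigenvalue.

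There is no real obstacle in this proof; the only point worth care is the use of the left inequality in \eqref{consistencia2} to guarantee that $b_h(\cdot,\cdot)$ is positive definite (not merely semidefinite) on $\bV_h$, which is what turns the identity obtained by testing with $\bw_h=\bu_h-\bv_h$ into the conclusion $\bu_h=\bv_h$.
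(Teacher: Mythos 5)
Your proof is correct and follows essentially the same route as the paper: the paper's one-line argument is the observation that $\bT_h\bw_h=\bw_h$ holds if and only if $\int_\O\div\bw_h\,\div\bv_h=0$ for all $\bv_h\in\bV_h$, which you have simply unpacked into the two inclusions, using the positive definiteness of $b_h(\cdot,\cdot)$ from \eqref{consistencia2} (equivalently, the uniqueness in the discrete source problem guaranteed by Lemma~\ref{ha-elipt-disc}) for the direction $\bK_h\subseteq\ker(\bI-\bT_h)$. The only cosmetic blemish is your parenthetical justification that $\bK_h\neq\{\0\}$, which is circular as written since it restates the definition of $\bK_h$; the paper does not address the nontriviality of $\bK_h$ either, so this does not affect the comparison.
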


\begin{proof}
We have that $\bw_h\in\bV_h$ is an eigenfunction associated with the
eigenvalue $\mu_h=1$ if and only if 
$\int_{E}\div\bw_h\div\bv_h=0\ \,\forall\bv_h\in\bV_h$, namely, if and
only if $\bw_h\in\bK_h$. 
$\hfill\qed$
\end{proof}

As a consequence of all this, we have the following spectral characterization of the discrete solution
operator.

\begin{theorem}
\label{CHAR_SP_DISC}
The spectrum of $\bT_h$ consists of $M_h:=\dim(\bV_h)$ eigenvalues,
repeated according to their respective multiplicities. It decomposes as
$\sp(\bT_h)=\left\{1\right\}\cup\left\{\mu_{hk}\right\}_{k=1}^{N_h}$,
where:
\begin{enumerate}
\item[\textit{i)}] the eigenspace associated with $\mu_h=1$ is $\bK_h$;
\item[\textit{ii)}] $\mu_{hk}\in(0,1)$, 
$k=1,\dots,N_h:=M_h-\dim(\bK_h)$, are non-defective eigenvalues repeated
according to their respective multiplicities.
\end{enumerate}
\end{theorem}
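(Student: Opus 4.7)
The whole argument rests on the fact that $\bV_h$ is finite dimensional, so the spectrum of $\bT_h$ reduces to its eigenvalues, and on the two structural properties already noted in the text: $\bT_h$ is self-adjoint with respect to the bilinear forms $a_h(\cdot,\cdot)$ and $b_h(\cdot,\cdot)$, and both of these are actually inner products on $\bV_h$ by Lemma~\ref{ha-elipt-disc} and the stability estimate \eqref{consistencia2}. The plan is to first exploit self-adjointness to obtain that $\bT_h$ is diagonalizable with $M_h$ real eigenvalues (counted with multiplicity), and then use a Rayleigh-type computation to localize them in $(0,1]$.

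First, I would observe that since $a_h(\cdot,\cdot)$ is a symmetric, positive definite bilinear form on $\bV_h$ (by Lemma~\ref{ha-elipt-disc}) and $\bT_h$ is self-adjoint with respect to it, the spectral theorem for self-adjoint operators on a finite dimensional inner product space yields a basis of $\bV_h$ consisting of eigenvectors of $\bT_h$ and $M_h=\dim(\bV_h)$ real eigenvalues, all of which are necessarily non-defective. This already gives the counting statement and the non-defectiveness claim in \textit{ii)}.

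Next, I would locate the eigenvalues. Let $(\mu_h,\bw_h)$ be any eigenpair with $\bw_h\neq\0$; then $a_h(\bw_h,\bv_h)=\mu_h^{-1}\,b_h(\bw_h,\bv_h)$ only if we first check $\mu_h\neq 0$ (if $\bT_h\bw_h=\0$, the definition of $\bT_h$ and $b_h(\bw_h,\bw_h)>0$ gives an immediate contradiction, so $0\notin\sp(\bT_h)$). Testing with $\bv_h=\bw_h$ in the eigenvalue equation yields
\begin{equation*}
\mu_h\,a_h(\bw_h,\bw_h)=b_h(\bw_h,\bw_h),
\end{equation*}
and, writing $a_h(\bw_h,\bw_h)=\int_{\O}(\div\bw_h)^2+b_h(\bw_h,\bw_h)$, one has $0<b_h(\bw_h,\bw_h)\leq a_h(\bw_h,\bw_h)$, so $\mu_h\in(0,1]$, with $\mu_h=1$ if and only if $\div\bw_h=0$ in $\O$, i.e.\ $\bw_h\in\bK_h$. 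Combined with the preceding proposition, this exactly identifies the eigenspace of $\mu_h=1$ as $\bK_h$ (of dimension $\dim(\bK_h)$), and forces the remaining $N_h:=M_h-\dim(\bK_h)$ eigenvalues (counted with multiplicity) to lie in $(0,1)$.

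I do not anticipate a real obstacle here: the statement is essentially a bookkeeping consequence of self-adjointness on a finite dimensional space plus a one-line Rayleigh quotient argument. The only point that requires a small amount of care is ruling out $\mu_h=0$ and correctly separating the $\mu_h=1$ eigenspace from the others; both are handled by the positive definiteness of $b_h$ on $\bV_h$ guaranteed by \eqref{consistencia2}.
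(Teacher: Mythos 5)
Your proof is correct and follows exactly the route the paper intends: the theorem is stated there without a written proof, as a consequence of the finite dimensionality of $\bV_h$, the self-adjointness of $\bT_h$ with respect to the inner product $a_h(\cdot,\cdot)$ (which is positive definite by Lemma~\ref{ha-elipt-disc}), the preceding proposition identifying the eigenspace of $\mu_h=1$ with $\bK_h$, and the Rayleigh-quotient identity $\mu_h\,a_h(\bw_h,\bw_h)=b_h(\bw_h,\bw_h)$ together with the positivity of $b_h$ from \eqref{consistencia2}. Your write-up supplies precisely these standard details, so there is nothing to add.
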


In what follows, we derive several auxiliary results which will be used
in the following section to prove convergence and error estimates for
the spectral approximation.

First, we establish interpolation properties in the discrete space
$\bV_h$. Although the $\bV_h$-interpolant can be defined for less
regular functions, in our case it is enough to consider $\bv\in\bV$ such
that $\bv|_{E}\in [\HtE]^2$ for some $t>1/2$ and for all $E\in\CT_h$, so
that we can easily take its trace on each individual edge. Then, we
define its interpolant $\bv_I\in\bV_h$ by fixing its degrees of freedom
as follows:
\begin{align}
\label{uno}
\int_{e}\left(\bv-\bv_{I}\right)\cdot\bn\;q\ds & =0
\qquad\forall q\in\bbP_k(e),
\quad\forall\text{ internal edge }e\not\subset\G;
\\
\label{dos}
\int_{E}\left(\bv-\bv_{I}\right)\cdot\nabla q & =0
\qquad\forall q\in\bbP_k(E)/\bbP_{0}(E),
\quad\forall E\in\T_{h}.
\end{align}

In what follows, we state two results about the approximation properties
of this interpolant, whose proof we postpone to the Appendix. The first
one concerns approximation properties of $\div\bv_I$ and follows from a
commuting diagram property for this interpolant, which involves the
$\LO$-orthogonal projection 
$$
P_k:\;\LO\longrightarrow
\left\{q\in\LO: q|_E\in\bbP_k(E)\quad\forall E\in\CT_h\right\}.
$$

\begin{lemma}
\label{lemmainter}
Let $\bv\in\bV$ be such that $\bv\in[\HtO]^2$ with $t>1/2 $. Let
$\bv_I\in\bV_h$ be its interpolant defined by \eqref{uno}--\eqref{dos}.
Then,
$$
\div\bv_I=P_k(\div\bv)\quad\text{ in }\O.
$$
Consequently, for all $E\in\CT_h$,
$\left\|\div\bv_I\right\|_{0,E}\leq\left\|\div\bv\right\|_{0,E}$ and,
if $\div\bv|_{E}\in\HrE$ with $r\geq 0$, then
\begin{equation*}
\left\|\div\bv-\div\bv_I\right\|_{0,E}
\leq Ch_E^{\min\{r,k+1\}}\left|\div\bv\right|_{r,E}. 
\end{equation*}
\end{lemma}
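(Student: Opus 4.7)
The plan is to prove the commuting diagram identity $\div\bv_I = P_k(\div\bv)$ element by element by integration by parts, and then deduce both estimates from standard properties of the $\L^2$-projection onto $\bbP_k$.

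First, I would observe that by definition of $\bV_h^E$ one has $\div\bv_I|_E\in\bbP_k(E)$, so $P_k(\div\bv_I)=\div\bv_I$ on each $E$. Hence it suffices to show, for every $E\in\CT_h$,
\begin{equation*}
\int_E(\div\bv-\div\bv_I)\,q = 0\qquad\forall q\in\bbP_k(E).
\end{equation*}
The hypothesis $t>1/2$ ensures that the normal trace $\bv\cdot\bn$ is a well-defined $\L^2$ function on each edge, so that the degrees of freedom \eqref{uno}--\eqref{dos} make sense for $\bv$ and the interpolant is well-defined.

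Next, I would integrate by parts elementwise: for any $q\in\bbP_k(E)$,
\begin{equation*}
\int_E(\div\bv-\div\bv_I)\,q
= -\int_E(\bv-\bv_I)\cdot\nabla q
+ \int_{\partial E}\left((\bv-\bv_I)\cdot\bn\right)q\ds.
\end{equation*}
Split $q = q_0+\tilde q$ with $q_0\in\bbP_0(E)$ and $\tilde q\in\bbP_k(E)/\bbP_0(E)$. Since $\nabla q = \nabla\tilde q$, the volume term vanishes by \eqref{dos}. For the boundary term I would decompose $\partial E$ into its edges: on each internal edge $e\not\subset\G$ the contribution is zero by \eqref{uno} applied with the test polynomial $q|_e\in\bbP_k(e)$; on a boundary edge $e\subset\G$, $\bv\cdot\bn=0$ because $\bv\in\bV$ and $\bv_I\cdot\bn=0$ because $\bv_I\in\bV_h\subset\bV$, so that contribution also vanishes. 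This gives $\div\bv_I = P_k(\div\bv)$ in $\O$.

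The remaining assertions are immediate consequences. The inequality $\|\div\bv_I\|_{0,E}\le\|\div\bv\|_{0,E}$ follows from the fact that $P_k$ is an $\L^2$-orthogonal projection, hence a contraction in $\L^2(E)$. For the error estimate, I would invoke the standard polynomial approximation property of the local $\L^2$-projection: if $w\in\HrE$ with $r\geq 0$, then $\|w-P_k w\|_{0,E}\le C h_E^{\min\{r,k+1\}}|w|_{r,E}$, applied with $w=\div\bv$.

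The only mildly delicate point, and the one I would be most careful about, is the treatment of boundary edges in the integration-by-parts step, since there the degree-of-freedom identity \eqref{uno} is not available; the argument rescues this by appealing to the essential boundary condition encoded in the global space $\bV$. Everything else is a routine consequence of the definitions and of standard $\L^2$-projection approximation theory.
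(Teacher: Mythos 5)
Your proof is correct and follows essentially the same route as the paper's: elementwise integration by parts, vanishing of the volume and boundary terms via the degrees of freedom \eqref{uno}--\eqref{dos}, and then standard $\L^2$-projection estimates. You are in fact slightly more careful than the paper in two spots it glosses over — splitting off the constant part of $q$ (since \eqref{dos} only tests against $\bbP_k(E)/\bbP_0(E)$) and handling boundary edges via the essential condition in $\bV$ — while the paper additionally notes that non-integer $r$ is covered by Banach space interpolation.
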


The second result concerns the $\LO$ approximation property of $\bv_I$.

\begin{lemma}
\label{lemmainterV_I}
Let $\bv\in\bV$ be such that $\bv\in[\HtO]^2$ with $t>1/2$. Let
$\bv_I\in\bV_h$ be its interpolant defined by \eqref{uno}--\eqref{dos}.
Let $E\in\CT_h$. If $1\leq t\leq k+1$, then
\begin{equation*}
\left\|\bv-\bv_I\right\|_{0,E}
\leq Ch_E^{t}\left|\bv\right|_{t,E},
\end{equation*}
whereas, if $1/2 <t\leq1$, then
\begin{equation*}
\left\|\bv-\bv_I\right\|_{0,E}
\leq C\left(h_E^{t}\left|\bv\right|_{t,E}
+h_E\left\|\div\bv\right\|_{0,E}\right).
\end{equation*}
\end{lemma}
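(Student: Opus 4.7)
The plan is to follow the standard virtual-element interpolation template: introduce a polynomial approximation to $\bv$ that actually lies in the virtual space, exploit the consistency of the interpolant on that subspace, and close the argument with a scaling-based stability estimate on a reference element.

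First I would introduce an auxiliary polynomial $\bv_\pi\in\bV_h^E$ approximating $\bv$. Since $\nabla\bbP_{k+1}(E)\subset\bV_h^E$, a natural candidate is $\bv_\pi=\nabla p_\pi$ with $p_\pi\in\bbP_{k+1}(E)$ an averaged Taylor polynomial of a suitable scalar potential of $\bv$ obtained via a local Helmholtz splitting on $E$. Under the star-shapedness assumption $\mathbf{A_2}$, the Bramble--Hilbert/Deny--Lions machinery applies and yields
$$
\|\bv-\bv_\pi\|_{0,E}+h_E|\bv-\bv_\pi|_{1,E}\le C h_E^t|\bv|_{t,E}
$$
when $1\le t\le k+1$, and an analogous bound augmented by an extra $h_E\|\div\bv\|_{0,E}$ contribution in the regime $1/2<t\le 1$, where that term compensates for the non-gradient component of $\bv$ that is only controlled through $\div\bv$.

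Because $\bv_\pi\in\bV_h^E$, unisolvence of the degrees of freedom \eqref{freedom}--\eqref{freedom2} gives $(\bv_\pi)_I=\bv_\pi$, and the linearity of the interpolation map then implies the identity $\bv-\bv_I=(\bv-\bv_\pi)-(\bv-\bv_\pi)_I$. Next I would estimate $\|(\bv-\bv_\pi)_I\|_{0,E}$ via a scaling argument. Mapping $E$ to a reference polygon of unit diameter, which is possible thanks to $\mathbf{A_1}$--$\mathbf{A_2}$, the space $\bV_h^E$ becomes a fixed finite-dimensional space on which the $L^2$-norm is equivalent to any norm built from the degrees of freedom. Bounding the edge moments by the multiplicative trace inequality and the interior moments by Cauchy--Schwarz and then scaling back to $E$, one obtains
$$
\|(\bv-\bv_\pi)_I\|_{0,E}\le C\bigl(\|\bv-\bv_\pi\|_{0,E}+h_E|\bv-\bv_\pi|_{1,E}+h_E\|\div(\bv-\bv_\pi)\|_{0,E}\bigr).
$$
Combining this with the Bramble--Hilbert estimate of the first step and noting that $\div\bv_\pi=\Delta p_\pi$ is a polynomial of degree $k$ produces the claimed bounds in both regularity regimes.

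The main technical obstacle is the low-regularity case $1/2<t\le 1$. The Sobolev trace theorem on $\partial E$ is borderline in this range, so the clean control of the edge moments used above degrades, and the averaged Taylor construction of the scalar potential has to be carried out carefully in fractional Sobolev spaces. Both difficulties are resolved by isolating the non-gradient part of $\bv$ (as in Lemma~\ref{ortho} at the global level) and controlling it through the available $\H(\div)$ regularity; it is precisely this workaround that forces the additional $h_E\|\div\bv\|_{0,E}$ term into the statement when $t\le 1$, and shows why it disappears as soon as $t\ge 1$ and the gradient-polynomial approximation alone already achieves optimal order.
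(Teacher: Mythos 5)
Your skeleton --- approximate $\bv$ by an object that the interpolation operator reproduces, then combine a Bramble--Hilbert estimate with an $\L^2$-stability bound for the interpolant --- is the same as the paper's, and your asserted stability inequality has essentially the right form (compare \eqref{otraecuac} and \eqref{rodo3}). However, two steps do not hold up. First, the choice $\bv_\pi=\nabla p_\pi\in\nabla(\bbP_{k+1}(E))$ is incompatible with the approximation estimate you claim for it: a general $\bv\in\bV\cap[\HtO]^2$ has $\rot\bv\neq0$, and curl-free polynomial gradients cannot approximate such fields to order $h_E^{t}$. Concretely, for $k\geq1$ and $\bv=(-y,x)$ on $E$ one has $\left|\bv\right|_{2,E}=0$ and $\div\bv=0$, yet $\inf_{p\in\bbP_{k+1}(E)}\left\|\bv-\nabla p\right\|_{0,E}>0$, so your first-step estimate is false. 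Your proposed repair --- that the non-gradient component is ``controlled through $\div\bv$'' --- confuses $\rot$ with $\div$: in a local Helmholtz splitting the non-gradient part is measured by $\rot\bv$, which appears nowhere in the statement. The paper instead takes the comparison polynomial in the full space $[\bbP_k(E)]^2$ (via \cite[Proposition~4.2]{BBCMMR2013}), for which the optimal estimate does hold; and the extra $h_E\left\|\div\bv\right\|_{0,E}$ term for $1/2<t\leq1$ has a different origin than you suggest: the internal degrees of freedom \eqref{dos} are moments of $\div\bv$, which contribute $h_E\left\|\div\bv\right\|_{0,E}$ to the stability bound and can be absorbed into $h_E\left|\bv\right|_{1,E}$ only when $t\geq1$. (Your choice of $\nabla(\bbP_{k+1}(E))$ would suffice for the fields to which the lemma is actually applied later, which lie in $\bG\cap\bV$, but not for the lemma as stated.)

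Second, your stability bound for the interpolation operator is asserted by mapping to ``a reference polygon'' and invoking norm equivalence on ``a fixed finite-dimensional space''. There is no fixed reference polygon: the admissible shapes form an infinite family under $\mathbf{A_1}$--$\mathbf{A_2}$, the virtual functions are known only implicitly, and the whole difficulty is to prove that the equivalence constant is uniform over this family. That is precisely what the bulk of the paper's appendix does: a trace inequality on polygons obtained by sub-triangulating from the star center (Lemma~\ref{trace_s}); uniform $\L^2$ bounds on the boundary dual basis functions obtained by writing each as $\nabla\gamma$ with $\gamma$ solving an explicit Neumann problem and running energy plus Poincar\'e estimates (Lemma~\ref{cotatheta}); and, for the internal dual basis functions, a compactness argument over the set of admissible rescaled polygons to bound the inverse Gram matrix of the scaled monomials uniformly (Lemma~\ref{cotatheta2}). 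Without an argument of this kind your key inequality for $\left\|(\bv-\bv_\pi)_I\right\|_{0,E}$ is unproved. Finally, your concern that the trace theorem is ``borderline'' for $1/2<t\leq1$ is misplaced (it is fine for every $t>1/2$, cf. Lemma~\ref{trace_s}), and the decomposition of Lemma~\ref{ortho} is global and cannot be invoked element by element.
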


Let $\bK_h^{\bot}$ be the $[\LO]^2$-orthogonal complement of $\bK_h$ in
$\bV_h$, namely,
$$
\bK_h^{\bot}:=\left\{\bv_h\in\bV_h: 
\ \int_{\O}\bv_h\cdot\boldsymbol{\xi}_h=0
\quad\forall\boldsymbol{\xi}_h\in\bK_h\right\}.
$$
Note that $\bK_h$ and $\bK_h^{\bot}$ are also orthogonal in $\HdivO$.
The following lemma shows that, although
$\bK_h^{\bot}\not\subset\bK^{\bot}=\bG\cap\bV$, the gradient part in the
Helmholtz decomposition of a function in $\bK_h^{\bot}$ is
asymptotically small.

\begin{lemma}
\label{conver}
Let $\bv_h\in\bK_h^{\bot}$. Then, there exist $p\in\HusO$ with
$s\in(1/2,1]$ as in Lemma~\ref{ortho} and $\bpsi\in\bK$ such that
$\bv_h=\bpsi+\nabla p$ and
\begin{align}
\label{i}
\left\|\nabla p\right\|_{s,\O}
& \leq C\left\|\div\bv_h\right\|_{0,\O},
\\
\label{ii}
\left\|\bpsi\right\|_{0,\O}
& \leq Ch^{s}\left\|\div\bv_h\right\|_{0,\O}.
\end{align}
\end{lemma}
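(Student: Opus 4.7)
My plan is to obtain \eqref{i} directly from Lemma~\ref{ortho} and to prove \eqref{ii} by a duality-type argument that exploits the orthogonality $\bv_h\in\bK_h^\bot$, reducing the estimate to an interpolation error bound for $\nabla p$. Applying Lemma~\ref{ortho} to $\bv_h\in\bV_h\subset\bV$ yields the decomposition $\bv_h=\bpsi+\nabla p$ with $\bpsi\in\bK$ and $\nabla p\in\bG\cap\bV$; since $\div\bpsi=0$, we have $\Delta p=\div\bv_h$, and Lemma~\ref{ortho} itself supplies the $[\HsO]^2$-regularity of $\nabla p$ together with the bound \eqref{i}.

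For \eqref{ii}, the starting identity is $\|\bpsi\|_{0,\O}^2=\int_\O\bpsi\cdot\bv_h$, obtained by $[\LO]^2$-orthogonality of $\bpsi$ and $\nabla p$. Since $\bv_h\in\bK_h^\bot$, we may subtract any $\boldsymbol{\xi}_h\in\bK_h$ from $\bpsi$ in this integrand without changing its value. The critical choice is $\boldsymbol{\xi}_h:=\bv_h-(\nabla p)_I$, where $(\nabla p)_I\in\bV_h$ is the interpolant of Lemma~\ref{lemmainterV_I} (well defined since $\nabla p\in[\HsO]^2$ with $s>1/2$). By Lemma~\ref{lemmainter} we have $\div(\nabla p)_I=P_k(\Delta p)$, and since $\div\bv_h|_E\in\bbP_k(E)$ one has $P_k(\Delta p)=\Delta p=\div\bv_h$, so indeed $\boldsymbol{\xi}_h\in\bK_h$. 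A second key observation is that the interpolation error $(\nabla p)_I-\nabla p$ itself lies in $\bK$: its divergence vanishes on each element and its normal trace vanishes on $\G$, because both $\nabla p$ and $(\nabla p)_I$ share those properties. The orthogonality in Lemma~\ref{ortho} therefore gives $\int_\O\left((\nabla p)_I-\nabla p\right)\cdot\nabla p=0$, and expanding $\bv_h=\bpsi+\nabla p$ leads to
$$\|\bpsi\|_{0,\O}^2=\int_\O(\bpsi-\boldsymbol{\xi}_h)\cdot\bv_h=\int_\O\left((\nabla p)_I-\nabla p\right)\cdot\bv_h=\int_\O\left((\nabla p)_I-\nabla p\right)\cdot\bpsi.$$
A Cauchy--Schwarz then yields $\|\bpsi\|_{0,\O}\leq\|(\nabla p)_I-\nabla p\|_{0,\O}$, and the remainder is a routine application of Lemma~\ref{lemmainterV_I} with $t=s$ to $\nabla p$: summing the local bound, absorbing the lower-order $h_E\|\div\nabla p\|_{0,E}$ contribution using $h_E\leq h_E^s$ (valid for $h\leq 1$ and $s\leq 1$), and substituting \eqref{i}, one arrives at $\|(\nabla p)_I-\nabla p\|_{0,\O}\leq Ch^s\|\div\bv_h\|_{0,\O}$, establishing \eqref{ii}.

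The part I expect to be the main obstacle is the duality step described above. A naive Cauchy--Schwarz applied directly to $\int_\O\bpsi\cdot\bv_h$ would bring in $\|\bv_h\|_{0,\O}$ on the right, which is not controlled by $\|\div\bv_h\|_{0,\O}$; any Young-type splitting based on that bound delivers at best the order $h^{s/2}$. The double cancellation afforded by the membership $\boldsymbol{\xi}_h\in\bK_h$ (eliminating the $\bK_h$-component of $\bv_h$) and $(\nabla p)_I-\nabla p\in\bK$ (eliminating the $\nabla p$-component of what is left) is precisely what replaces $\bv_h$ by $\bpsi$ on the right-hand side and yields the sharp order $h^s$.
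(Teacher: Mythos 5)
Your proposal is correct and follows essentially the same route as the paper: both prove \eqref{i} directly from Lemma~\ref{ortho}, and for \eqref{ii} both use the commuting-diagram property of Lemma~\ref{lemmainter} to see that $(\nabla p)_I-\bv_h$ is divergence-free (hence in $\bK_h\subseteq\bK$), combine the two orthogonalities ($\bv_h\perp\bK_h$ in $[\LO]^2$ and $\nabla p\perp\bK$) to reduce $\left\|\bpsi\right\|_{0,\O}^2$ to $\int_\O\bpsi\cdot\left((\nabla p)_I-\nabla p\right)$ up to sign, and finish with Cauchy--Schwarz, Lemma~\ref{lemmainterV_I} with $t=s$, and \eqref{i}. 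The only difference is bookkeeping: the paper splits $\left\|\bpsi\right\|_{0,\O}^2=\int_\O(\nabla p-\bv_h)\cdot(\nabla p-(\nabla p)_I)+\int_\O(\nabla p-\bv_h)\cdot((\nabla p)_I-\bv_h)$ and kills the second term, whereas you start from $\int_\O\bpsi\cdot\bv_h$ and apply the two cancellations in sequence.
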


\begin{proof}
Let $\bv_h\in\bK_h^{\bot}\subset\bV_h\subset\bV$. As a consequence of
Lemma~\ref{ortho}, we know that there exist $p\in\HusO$ and
$\bpsi\in\bK$ such that $\bv_h=\nabla p+\bpsi$ and that
$\left\|\nabla p\right\|_{s,\O}\leq C\left\|\div\bv_h\right\|_{0,\O}$,
which proves \eqref{i}. 

On the other hand, we have that
$$
\left\|\bpsi\right\|_{0,\O}^2 
=\int_{\O}\left(\nabla p-\bv_h\right)
\cdot\left(\nabla p-(\nabla p)_{I}\right) 
+\int_{\O}\left(\nabla p-\bv_h\right)
\cdot\left( (\nabla p)_I-\bv_h\right).
$$
Now, according to Lemma~\ref{lemmainter}, 
$\div((\nabla p)_{I})=P_k(\div(\nabla p))$. Therefore, since $\Delta
p=\div\bv_h$, we obtain
$$
\div\left((\nabla p)_{I}-\bv_h\right)
=P_k\left(\Delta p\right)-\div\bv_h
=P_k\left(\div\bv_h\right)-\div\bv_h=0,
$$
where we have used that for $\bv_h\in\bV_h$,
$\div\bv_h|_{E}\in\bbP_k(E)$. Therefore 
$\left((\nabla p)_{I}-\bv_h\right)\in\bK_h\subseteq\bK$ and since
$\nabla p\in\bG\cap\bV=\bK^{\bot}$ and $\bv_h\in\bK_h^{\bot}$, we have
that
\begin{equation*}
\int_{\O}\left(\nabla p-\bv_h\right)
\cdot\left((\nabla p)_I-\bv_h\right)=0.
\end{equation*}
Thus,
\begin{equation*}
\left\|\bpsi\right\|_{0,\O}^2
=\int_{\O}\left(\nabla p-\bv_h\right)
\cdot\left(\nabla p-(\nabla p)_{I}\right)
\end{equation*}
and, by using Cauchy-Schwarz inequality, Lemma~\ref{lemmainterV_I} and
\eqref{i}, we obtain
\begin{align*}
\left\|\bpsi\right\|_{0,\O}^2
& \leq\sum_{E\in\T_h}\left\|\nabla p-\bv_h\right\|_{0,E}
\left\|\nabla p-(\nabla p)_{I}\right\|_{0,E}
\\
& \leq C\sum_{E\in\T_h}\left\|\nabla p-\bv_h\right\|_{0,E}
\left(h^s_E\left\|\nabla p\right\|_{s,E}
+h_E\left\|\div(\nabla p)\right\|_{0,E}\right)
\\
& \leq Ch^s\left\|\bpsi\right\|_{0,\O}\left\|\div\bv_h\right\|_{0,\O},
\end{align*}
which allows us to complete the proof.
$\hfill\qed$
\end{proof}

To end this section, we prove the following result which will be used
in the sequel. Let $\bpi_h$ be defined in $\bV$ by 
\begin{equation}
\label{proyectorg}
\left(\bpi_h\bv\right)|_E
:=\bpi_h^E(\bv|_E)
\quad\text{ for all }E\in\T_h
\end{equation}
with $\bpi_h^E$ defined by \eqref{numero}.

\begin{lemma}
\label{A}
There exists a constant $C>0$ such that, for every $p\in\HutO$ with
$1/2<t\le k+1$, there holds 
$$
\left\|\nabla p-\bpi_h (\nabla p)\right\|_{0,\O}
\leq Ch^{t}\left\|\nabla p\right\|_{t,\O}.
$$
\end{lemma}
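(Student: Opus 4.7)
The strategy is to exploit the fact that $\bpi_h^E$ is the $[L^2(E)]^2$-orthogonal projection onto the polynomial-gradient subspace $\widehat{\bV}_h^E = \nabla(\bbP_{k+1}(E))$, and then invoke classical polynomial approximation on each star-shaped element.

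First, I would use the best-approximation property of the orthogonal projection: for every $q \in \bbP_{k+1}(E)$, $\nabla q \in \widehat{\bV}_h^E$ and hence
$$
\left\|\nabla p - \bpi_h^E(\nabla p)\right\|_{0,E}
\leq \left\|\nabla p - \nabla q\right\|_{0,E}.
$$
Thus it suffices to exhibit a single polynomial $q \in \bbP_{k+1}(E)$ with good approximation properties in the $H^1$ seminorm.

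Second, I would invoke the classical Bramble--Hilbert / Dupont--Scott polynomial approximation result on star-shaped domains (which applies thanks to mesh assumption $\mathbf{A_2}$, with the constant depending only on $C_{\T}$): for $p|_E \in H^{1+t}(E)$ with $1/2 < t \le k+1$, there exists $q \in \bbP_{k+1}(E)$ such that
$$
\left|p - q\right|_{1,E}
\leq C h_E^{t}\left|p\right|_{1+t,E}
\leq C h_E^{t}\left\|\nabla p\right\|_{t,E}.
$$
Combining the two bounds gives the local estimate
$$
\left\|\nabla p - \bpi_h^E(\nabla p)\right\|_{0,E}
\leq C h_E^{t}\left\|\nabla p\right\|_{t,E}.
$$

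Finally, recalling the definition \eqref{proyectorg} of the global projector, I would square this local inequality, sum over $E \in \CT_h$ and bound $h_E \leq h$ to conclude
$$
\left\|\nabla p - \bpi_h(\nabla p)\right\|_{0,\O}^{2}
= \sum_{E \in \CT_h}\left\|\nabla p - \bpi_h^E(\nabla p)\right\|_{0,E}^{2}
\leq C h^{2t}\sum_{E \in \CT_h}\left\|\nabla p\right\|_{t,E}^{2}
\leq C h^{2t}\left\|\nabla p\right\|_{t,\O}^{2},
$$
which yields the desired inequality. No real obstacle is expected; the only subtlety is ensuring that the constant produced by the polynomial approximation argument depends only on the shape-regularity constant $C_{\T}$, which is standard on domains that are star-shaped with respect to a ball of comparable radius (assumption $\mathbf{A_2}$).
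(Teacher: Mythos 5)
Your proposal is correct and follows essentially the same route as the paper: both use the best-approximation property of the $[\L^2(E)]^2$-orthogonal projection onto $\nabla(\bbP_{k+1}(E))$ and then standard polynomial approximation on star-shaped elements (Brenner--Scott for integer $t$, Banach-space interpolation for fractional $t$), followed by summing over the elements.
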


\begin{proof}
The result follows from the fact that, since $\bpi_h^{E}$ is the
$[\L^2(E)]^2$-projection onto $\widehat{\bV}_h^E:=\nabla(\bbP_{k+1}(E))$
(cf. \eqref{numero}), 
$$
\big\|\nabla p-\bpi_h^E(\nabla p)\big\|_{0,E}
=\inf_{ q\in\bbP_{k+1}(E)}\left\|\nabla p-\nabla q\right\|_{0,E}
\leq Ch_E^{t}\left\|\nabla p\right\|_{t,E}.
$$
Let us remark that the last inequality is a consequence of standard
approximation estimates for polynomials on polygons in case of integer
$t$ (see, for instance, \cite[Lemma~4.3.8]{BS-2008}) and standard Banach
space interpolation results for non-integer $t$. 
$\hfill\qed$
\end{proof}

\section{Spectral approximation and error estimates}
\label{SEC:approximation}

To prove that $\bT_h$ provides a correct spectral approximation of 
$\bT$, we will resort to the theory developed in \cite{Raviart1} for
non-compact operators. To this end, we first introduce some notation.
For any linear bounded operator $\bS:\;\bV\longrightarrow\bV$,
we define 
\begin{equation*}
\left\|\bS\right\|_h
:=\sup_{\0\ne\bv_h\in\bV_h}
\dfrac{\left\|\bS\bv_h\right\|_{\divO}}
{\left\|\bv_h\right\|_{\divO}}.
\end{equation*}

We recall the definition of the \textit{gap} $\hdel$ between two closed
subspaces $\bX$ and $\bY$ of $\bV$:
$$
\hdel(\bX,\bY)
:=\max\left\{\delta(\bX,\bY),\delta(\bY,\bX)\right\},
$$
where
$$
\delta(\bX,\bY)
:=\sup_{\scriptsize\begin{matrix}\bx\in\bX 
\\ \left\|\bx\right\|_{\divO}=1 \end{matrix}}\delta(\bx,\bY)
\qquad\text{with } \delta(\bx,\bY)
:=\inf_{\by\in\bY}\left\|\bx-\by\right\|_{\divO}.
$$
The theory from \cite{Raviart1} guarantees approximation of the spectrum
of $\bT$, provided the following two properties are satisfied:
\begin{itemize}
\item \textbf{P1}: $\ \left\|\bT-\bT_h\right\|_h\rightarrow0\ $
as $\ h\rightarrow0$;
\\[-.2cm]
\item \textbf{P2}: $\ \forall\bv\in\bV\ $
$\ \displaystyle\lim_{h\rightarrow 0}\delta(\bv,\bV_h)=0$.
\end{itemize}

Property \textbf{P2} follows immediately from the density of the smooth
functions in $\bV$ and the approximation properties in
Lemmas~\ref{lemmainter} and \ref{lemmainterV_I}. Hence, there only
remains to prove property \textbf{P1}. With this aim, first we note that
since $\bT|_{\bK_h}$ and $\bT_h|_{\bK_h}$ both reduce to the identity,
it is enough to estimate
$\left\|\left(\bT-\bT_h\right)\bf_h\right\|_{\divO}$ for
$\bf_h\in\bK_h^\bot$.

\begin{lemma}
\label{lemcotste}
There exists $C>0$ such that, for all $\bf_h\in\bK_h^{\bot}$,
\begin{equation*}
\left\|\left(\bT-\bT_h\right)\bf_h\right\|_{\divO}
\leq Ch^s\left\|\bf_h\right\|_{\divO}
\end{equation*}
with $s\in(1/2,1]$ as in Lemma~\ref{ortho}.
\end{lemma}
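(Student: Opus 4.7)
The plan is to combine a Strang-type argument with the Helmholtz decomposition of $\bf_h$ provided by Lemma~\ref{conver}. First I would write $\bf_h=\bpsi+\nabla p$ with $\bpsi\in\bK$, $\nabla p\in\bG\cap\bV$, $\|\bpsi\|_{0,\O}\le Ch^s\|\bf_h\|_{\divO}$ and $\|\nabla p\|_{s,\O}\le C\|\bf_h\|_{\divO}$. Using linearity of $\bT$ together with Lemma~\ref{muigual1} (which gives $\bT\bpsi=\bpsi$), this yields $\bu:=\bT\bf_h=\bpsi+\tilde{\bu}$, where $\tilde{\bu}:=\bT(\nabla p)\in\bG\cap\bV$ by Lemma~\ref{invariant} and $\|\tilde{\bu}\|_{s,\O}+\|\div\tilde{\bu}\|_{1,\O}\le C\|\bf_h\|_{\divO}$ by Theorem~\ref{teo2.5}. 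The splitting $\bu=\bpsi+\tilde{\bu}$ — a small irregular part plus a regular gradient part — is the backbone of the proof.

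Next I would introduce $\tilde{\bu}_I\in\bV_h$, the interpolant of the regular part $\tilde{\bu}$ defined by \eqref{uno}--\eqref{dos}, and apply the triangle inequality
\[
\left\|(\bT-\bT_h)\bf_h\right\|_{\divO}\le\|\bu-\tilde{\bu}_I\|_{\divO}+\|\tilde{\bu}_I-\bu_h\|_{\divO}.
\]
The first term is bounded by $\|\bpsi\|_{0,\O}+\|\tilde{\bu}-\tilde{\bu}_I\|_{\divO}$; the first summand is directly $O(h^s)\|\bf_h\|_{\divO}$, and the second is also $O(h^s)\|\bf_h\|_{\divO}$ by Lemmas~\ref{lemmainter} and~\ref{lemmainterV_I} applied to $\tilde{\bu}$ (using $h\le h^s$ for $s\le 1$).

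For $\|\tilde{\bu}_I-\bu_h\|_{\divO}$ I rely on the uniform ellipticity of $a_h$ (Lemma~\ref{ha-elipt-disc}) with the test function $\bv_h:=\tilde{\bu}_I-\bu_h$. Combining $a_h(\bu_h,\bv_h)=b_h(\bf_h,\bv_h)$ with $a(\bu,\bv_h)=b(\bf_h,\bv_h)$, the quantity $a_h(\tilde{\bu}_I-\bu_h,\bv_h)$ splits into approximation terms involving $\div(\tilde{\bu}_I-\tilde{\bu})$ and $\tilde{\bu}_I-\tilde{\bu}$, the small term $-\int_\O\bpsi\cdot\bv_h$, and two consistency errors $(b_h-b)(\tilde{\bu}_I,\bv_h)$ and $(b-b_h)(\bf_h,\bv_h)$. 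The first three are directly $O(h^s)\|\bf_h\|_{\divO}\|\bv_h\|_{\divO}$ by the estimates already in place. From the definition \eqref{21} of $b_h^E$ together with \eqref{consistencia1} and \eqref{20}, a short computation gives
\[
|(b_h-b)(\bw_h,\bv_h)|\le C\|\bw_h-\bpi_h\bw_h\|_{0,\O}\|\bv_h-\bpi_h\bv_h\|_{0,\O}
\]
for all $\bw_h,\bv_h\in\bV_h$. The key remaining step is to show that $\|\bf_h-\bpi_h\bf_h\|_{0,\O}$ and $\|\tilde{\bu}_I-\bpi_h\tilde{\bu}_I\|_{0,\O}$ are $O(h^s)\|\bf_h\|_{\divO}$. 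For $\bf_h$ I use the Helmholtz splitting again: Lemma~\ref{A} applied to $\nabla p$ yields $\|\nabla p-\bpi_h(\nabla p)\|_{0,\O}\le Ch^s\|\bf_h\|_{\divO}$, and the bound on $\bpsi$ handles the remainder. For $\tilde{\bu}_I$ I add and subtract $\tilde{\bu}$, invoking Lemma~\ref{lemmainterV_I} and Lemma~\ref{A} (the latter being applicable because $\tilde{\bu}\in\bG\cap\bV$ is itself a gradient in $[\H^s(\O)]^2$). Dividing by $\|\bv_h\|_{\divO}$ then closes the argument.

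The main obstacle I foresee is precisely this consistency estimate, namely controlling $\|\bf_h-\bpi_h\bf_h\|_{0,\O}$ for a generic $\bf_h\in\bK_h^{\bot}$ that a priori carries no regularity beyond belonging to $\bV_h$. This is where the crucial role of Lemma~\ref{conver} becomes apparent: it effectively replaces $\bf_h$ by a regular gradient $\nabla p$ plus an $O(h^s)$ remainder $\bpsi$, which combined with the polynomial best-approximation property in Lemma~\ref{A} is exactly what is needed to produce the $h^s$ rate.
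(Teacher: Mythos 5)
Your proposal is correct and follows essentially the same route as the paper: the same Helmholtz splitting of $\bf_h$ via Lemma~\ref{conver}, the interpolant of the regular gradient part (your $\tilde{\bu}=\bT(\nabla p)$ is exactly the paper's $\nabla q$ from the decomposition of $\bu$), the ellipticity of $a_h$ tested with the same discrete difference, and the consistency errors of $b_h$ controlled through $\left\|\bf_h-\bpi_h\bf_h\right\|_{0,\O}$ via Lemmas~\ref{conver} and \ref{A}. The only differences are organizational (e.g.\ your symmetric bound on $(b-b_h)$ versus the paper's one-sided version), and you correctly identify the control of $\left\|\bf_h-\bpi_h\bf_h\right\|_{0,\O}$ as the crux.
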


\begin{proof}
Let $\bf_h\in\bK_h^\bot $, $\bu:=\bT\bf_h$ and $\bu_h:=\bT_h\bf_h$.
According to Lemma~\ref{ortho}, we write $\bu=\bphi+\nabla q$ with
$\bphi\in\bK$, $\nabla q\in [\HsO]^2$ and 
$\left\|\nabla q\right\|_{s,\O}\leq C\left\|\div\bu\right\|_{0,\O}$. We
have 
\begin{equation}
\label{estimate}
\left\|\left(\bT-\bT_h\right)\bf_h\right\|_{\divO}\leq
\left\|\bu-(\nabla q)_I\right\|_{\divO}
+\left\|\bu_h-(\nabla q)_I\right\|_{\divO},
\end{equation}
where $(\nabla q)_I$ is the $\bV_h$-interpolant of $\nabla q$
defined by \eqref{uno}--\eqref{dos}.
We define $\bv_h:=\bu_h-(\nabla q)_I\in\bV_h$. Thanks to
Lemma~\ref{ha-elipt-disc}, the definition \eqref{21} of
$b_h^{E}(\cdot,\cdot)$ and those of $\bT$ and $\bT_h$, we have 
\begin{align*}
\beta\left\|\bv_h\right\|^2_{\divO}
& \leq a_h(\bv_h,\bv_h)
=a_h(\bu_h,\bv_h)-a_h((\nabla q)_I,\bv_h)
\\
& =b_h(\bf_h,\bv_h)-\int_{\O}\div((\nabla q)_I)\div\bv_h
-\sum_{E\in\T_h}b_h^E((\nabla q)_I,\bv_h) 
\\
& =b_h(\bf_h,\bv_h)-\int_{\O}\bf_h\cdot\bv_h
+\int_{\O}\div(\bu-(\nabla q)_I)\div\bv_h
\\
& \hphantom{=}
-\sum_{E\in\T_h}\left(b_h^E\left((\nabla q)_I-\bpi_h^E\bu,\bv_h\right)
+\int_{E}\left(\bpi_h^E\bu-\bu\right)\cdot\bv_h\right),
\end{align*}
where for the last equality we have also used the consistency property
\eqref{consistencia1}. Since $\div((\nabla q)_I)=P_k(\div(\nabla q))$
(cf. Lemma~\ref{lemmainter}), we have that $\int_{\O}\div(\bu-(\nabla
q)_I)\div\bv_h=0$ for all $\bv_h\in\bV_h$. Then,
\begin{multline}
\beta\left\|\bv_h\right\|^2_{\divO}
\le\left(b_h(\bf_h,\bv_h)-\int_{\O}\bf_h\cdot\bv_h\right)
\\
\label{cotaa}
-\sum_{E\in\T_h}
\left(b_h^E((\nabla q)_I-\bpi_h^E\bu,\bv_h)
+\int_{E}\left(\bpi_h^E\bu-\bu\right)\cdot\bv_h\right).
\end{multline}
The first term on the right hand side can be bounded as follows: 
\begin{align*}
\label{51}
b_h(\bf_h,\bv_h) 
& -\int_{\O}\bf_h\cdot\bv_h
=\sum_{E\in\CT_h}
\left(b_h^E(\bf_h,\bv_h)-\int_{E}\bf_h\cdot\bv_h\right)
\\
\nonumber
& =\sum_{E\in\CT_h}
\left(\int_{E}\bpi_h^E\bf_h\cdot\bpi_h^E\bv_h
+S^E\big(\bf_h-\bpi_h^E\bf_h,\bv_h-\bpi_h^E\bv_h\big)
-\int_{E}\bf_h\cdot\bv_h\right)
\\
& =\sum_{E\in\CT_h}
\int_{E}\left(\bpi_h^E\bf_h-\bf_h\right)\cdot\bv_h
+\sum_{E\in\CT_h}S^E\big(\bf_h-\bpi_h^E\bf_h,\bv_h-\bpi_h^E\bv_h\big),
\nonumber
\end{align*}
where we have used \eqref{numero} to write the last equality. Now, from
the symmetry of $S^E(\cdot,\cdot)$, \eqref{20}, a Cauchy-Schwarz
inequality and the fact that $\bpi_h^E$ is an $\L^2(E)$-projection (cf.
\eqref{numero}), we have that
\begin{equation*}
\label{55}
\sum_{E\in\CT_h}S^E\big(\bf_h-\bpi_h^E\bf_h,\bv_h-\bpi_h^E\bv_h\big)
\leq\sum_{E\in\CT_h}
c_1\big\|\bf_h-\bpi_h^E\bf_h\big\|_{0,E}
\left\|\bv_h\right\|_{0,E}.
\end{equation*}
Therefore, using Cauchy-Schwarz inequality again,
\begin{equation}
\label{estabilitiss}
b_h(\bf_h,\bv_h)-\int_{\O}\bf_h\cdot\bv_h\leq C\sum_{E\in\CT_h}
\big\|\bf_h-\bpi_h^E\bf_h\big\|_{0,E}
\left\|\bv_h\right\|_{0,E}.
\end{equation}
Substituting the above estimate in \eqref{cotaa}, from
\eqref{consistencia2} and Cauchy-Schwarz inequality we obtain
\begin{align*}
\beta\left\|\bv_h\right\|^2_{\divO}
& \leq C\sum_{E\in\CT_h}
\left(\big\|\bf_h-\bpi_h^E\bf_h\big\|_{0,E}
+\left\|\bu-(\nabla q)_I\right\|_{0,E}
+\big\|\bu-\bpi_h^E\bu\big\|_{0,E}\right)
\left\|\bv_h\right\|_{0,E}
\\
& \leq C\left(\left\|\bf_h-\bpi_h\bf_h\right\|_{0,\O}
+\left\|\bu-(\nabla q)_I\right\|_{0,\O}
+\left\|\bu-\bpi_h\bu\right\|_{0,\O}\right)
\left\|\bv_h\right\|_{\divO},
\end{align*}
with $\bpi_{h}$ as defined in \eqref{proyectorg}. Therefore, from
\eqref{estimate},
$$
\left\|\left(\bT-\bT_h\right)\bf_h\right\|_{\divO}
\leq C\left(\left\|\bf_h-\bpi_h\bf_h\right\|_{0,\O}
+\left\|\bu-\bpi_h\bu\right\|_{0,\O}
+\left\|\bu-(\nabla q)_I\right\|_{\divO}\right).
$$

Thus, there only remains to estimate the three terms on the right-hand
side above. For the first one we write $\bf_h=\bpsi+\nabla p$ with
$\psi\in\bK$ and $p\in\HusO$ as in Lemma~\ref{conver}. Hence, by using
this and Lemma~\ref{A},
\begin{align*}
\left\|\bf_h-\bpi_h\bf_h\right\|_{0,\O} 
& \leq\left\|\bpsi-\bpi_h\bpsi\right\|_{0,\O}
+\left\|\nabla p-\bpi_h (\nabla p)\right\|_{0,\O}
\\
& \leq C\left(\left\|\bpsi\right\|_{0,\O}
+\left\|\nabla p-\bpi_h(\nabla p)\right\|_{0,\O}\right)
\\
& \leq Ch^s\left\|\div\bf_h\right\|_{0,\O}.
\end{align*}

On the other hand, we have that 
$\bu=\bT(\bpsi+\nabla p)=\bpsi+\bT(\nabla p)$ and, from
Lemmas~\ref{invariant} and \ref{ortho}, $\bT(\nabla p)=\nabla q$ and
$\bpsi=\bphi$. Moreover, by virtue of Theorem~\ref{teo2.5}, $q\in\HusO$
and
$$
\left\|\nabla q\right\|_{s,\O}
\leq C\left\|\nabla p\right\|_{\divO}
\leq C\left\|\bf_h\right\|_{\divO},
$$
whereas estimate \eqref{ii} still holds true for $\bpsi$:
$$
\left\|\bpsi\right\|_{0,\O}
\leq Ch^s\left\|\div\bf_h\right\|_{0,\O}.
$$ 
Then, using that $\bpi_h$ is an $[\LO]^2$-projection, from
Lemmas~\ref{conver} and \ref{A} we have
\begin{align*}
\left\|\bu-\bpi_h\bu\right\|_{0,\O} 
& \leq\left\|\bpsi-\bpi_h\bpsi\right\|_{0,\O}
+\left\|\nabla q-\bpi_h (\nabla q)\right\|_{0,\O})
\\
& \leq C\left(\left\|\bpsi\right\|_{0,\O}
+\left\|\nabla q-\bpi_h(\nabla q)\right\|_{0,\O}\right)
\\
& \leq Ch^s\left\|\div\bf_h\right\|_{0,\O}
+Ch^s\left\|\nabla q\right\|_{s,\O}
\\
& \leq Ch^s\left\|\bf_h\right\|_{\divO}.
\end{align*} 
Finally, using once more that $\bu=\bpsi+\nabla q$ and
Lemmas~\ref{conver}, \ref{lemmainterV_I} and \ref{lemmainter}, we write
\begin{align*}
& \left\|\bu-(\nabla q)_I\right\|_{\divO}
\leq\left\|\bpsi\right\|_{\divO}
+\left\|\nabla q-(\nabla q)_I\right\|_{\divO}
\\
& \hphantom{\left\|\bu-(\nabla q)_I\right\|}
\leq Ch^s\left\|\div\bf_h\right\|_{0,\O}
+\left\|\nabla q-(\nabla q)_I\right\|_{0,\O}
+\left\|\div(\nabla q)-\div((\nabla q)_I)\right\|_{0,\O}
\\
& \hphantom{\left\|\bu-(\nabla q)_I\right\|}
\leq Ch^s\left\|\bf_h\right\|_{\divO}
+C\left(h^s\left|\nabla q\right|_{s,\O}
+h\left\|\div(\nabla q)\right\|_{0,\O}\right)
+Ch\left|\div(\nabla q)\right|_{1,\O}
\\
& \hphantom{\left\|\bu-(\nabla q)_I\right\|}
\leq Ch^s\left\|\bf_h\right\|_{\divO},
\end{align*}
where, we have used that $\nabla q=\bT(\nabla p)$ and, hence, since
$\nabla p\in\bG\cap\bV$, from Theorem~\ref{teo2.5}
$\div(\nabla q)\in\HuO$ and $\left\|\div(\nabla q)\right\|_{1,\O} 
\le C\left\|\nabla p\right\|_{\divO}\le C\left\|\bf_h\right\|_{\divO}$. 

Collecting the previous estimates, we obtain
\begin{align*}
\left\|\left(\bT-\bT_h\right)\bf_h\right\|_{\divO}
& \leq Ch^s\left\|\bf_h\right\|_{\divO}
\end{align*}
and we end the proof.
$\hfill\qed$
\end{proof}

Now, we are in a position to conclude property \textbf{P1}.

\begin{corollary}
\label{cotaT}
There exists $C>0$, independent of $h$, such that
\begin{equation*}
\left\|\bT-\bT_h\right\|_{h}\leq Ch^s.
\end{equation*}
\end{corollary}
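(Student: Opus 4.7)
The plan is to reduce the estimate to Lemma~\ref{lemcotste} via the orthogonal decomposition $\bV_h=\bK_h\oplus\bK_h^{\bot}$. First I would fix an arbitrary $\0\neq\bv_h\in\bV_h$ and split it as $\bv_h=\bv_h^0+\bv_h^{\bot}$ with $\bv_h^0\in\bK_h$ and $\bv_h^{\bot}\in\bK_h^{\bot}$. Since the decomposition is orthogonal in $\HdivO$ (as remarked just after the definition of $\bK_h^{\bot}$), we have
$$
\left\|\bv_h^{\bot}\right\|_{\divO}\le\left\|\bv_h\right\|_{\divO}.
$$

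Next I would observe that on $\bK_h$ both operators act as the identity: $\bT_h|_{\bK_h}=\mathrm{Id}$ by the computation given right before the statement of Theorem~\ref{CHAR_SP_DISC}, while $\bT|_{\bK_h}=\mathrm{Id}$ because $\bK_h\subset\bK$ and $\bT|_{\bK}=\mathrm{Id}$ by Lemma~\ref{muigual1}. Hence $(\bT-\bT_h)\bv_h^0=\0$ and, by linearity,
$$
\left(\bT-\bT_h\right)\bv_h
=\left(\bT-\bT_h\right)\bv_h^{\bot}.
$$

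Since $\bv_h^{\bot}\in\bK_h^{\bot}$, Lemma~\ref{lemcotste} applies and gives
$$
\left\|\left(\bT-\bT_h\right)\bv_h\right\|_{\divO}
=\left\|\left(\bT-\bT_h\right)\bv_h^{\bot}\right\|_{\divO}
\leq Ch^{s}\left\|\bv_h^{\bot}\right\|_{\divO}
\leq Ch^{s}\left\|\bv_h\right\|_{\divO}.
$$
Dividing by $\left\|\bv_h\right\|_{\divO}$ and taking the supremum over $\0\neq\bv_h\in\bV_h$ yields the claim. There is no real obstacle here: the work has already been absorbed into Lemma~\ref{lemcotste}, and the corollary is a packaging step whose only nontrivial ingredient is the orthogonality of the $\bK_h$--$\bK_h^{\bot}$ splitting in $\HdivO$ together with the identity action of both operators on $\bK_h$.
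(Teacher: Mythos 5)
Your proof is correct and follows essentially the same route as the paper: decompose $\bv_h$ into its $\bK_h$ and $\bK_h^{\bot}$ components, note that $\bT$ and $\bT_h$ both act as the identity on $\bK_h$ so the difference only sees the $\bK_h^{\bot}$ part, apply Lemma~\ref{lemcotste}, and use the $\HdivO$-orthogonality of the splitting to bound $\left\|\bv_h^{\bot}\right\|_{\divO}$ by $\left\|\bv_h\right\|_{\divO}$. The only difference is cosmetic: you spell out explicitly why $(\bT-\bT_h)$ vanishes on $\bK_h$, which the paper relegates to a remark preceding Lemma~\ref{lemcotste}.
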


\begin{proof}
Given $\bv_h\in\bV_h$, we have that $\bv_h=\bpsi_h+\bf_h$ with
$\bpsi_h\in\bK_h$ and $\bf_h\in\bK_h^{\bot}$, then
\begin{equation*}
\left\|\left(\bT-\bT_h\right)\bv_h\right\|_{\divO}
=\left\|\left(\bT-\bT_h\right)\bf_h\right\|_{\divO}
\leq Ch^s\left\|\bf_h\right\|_{\divO},
\end{equation*}
where the last inequality follows from Lemma~\ref{lemcotste}. The proof
follows by noting that, since $\bv_h=\bpsi_h+\bf_h$ is an orthogonal
decomposition in $\HdivO$, we have that
$\left\|\bf_h\right\|_{\divO}\le\left\|\bv_h\right\|_{\divO}$.
$\hfill\qed$
\end{proof}

In order to establish spectral convergence and error estimates, we 
recall some other basic definitions from spectral theory.

Given a generic linear bounded operator $\bS:\;\bV\longrightarrow\bV$
defined on a Hilbert space $\bV$, the spectrum of $\bS$ is the set
$\sp(\bS):=\left\{z\in\C:
\ \left(z\bI-\bS\right)\text{ is not invertible}\right\}$ and the
resolvent set of $\bS$ is its complement
$\rho(\bS):=\C\setminus\sp(\bS)$. For any $z\in\rho(\bS)$,
$R_z(\bS):=\left(z\bI-\bS\right)^{-1}:\;\bV\longrightarrow\bV$ is the
resolvent operator of $\bS$ corresponding to $z$.

The following two results are consequence of property \textbf{P1}, see
\cite[Lemma~1 and Theorem~1]{Raviart1}.

\begin{lemma}
\label{lema1RT} Let us assume that {\rm\textbf{P1}} holds true and let
$F\subset\rho(\bT) $ be closed. Then, there exist positive constants $C$
and $h_0$ independent of $h$, such that for $h<h_0$ 
$$
\sup_{\bv_h\in\bV_h}\left\|R_z(\bT_{h})\bv_{h}\right\|_{\divO}
\leq C\left\|\bv_{h}\right\|_{\divO}
\qquad\forall z\in F.
$$
\end{lemma}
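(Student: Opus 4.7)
The plan is to use a standard resolvent perturbation identity together with the uniform convergence $\|\bT-\bT_h\|_h\to 0$ supplied by Corollary~\ref{cotaT}. For $z\in F\subset\rho(\bT)$ and $\bv_h\in\bV_h$, set $\bu_h:=R_z(\bT_h)\bv_h\in\bV_h$, so that $(z\bI-\bT_h)\bu_h=\bv_h$. Writing $\bT=\bT_h+(\bT-\bT_h)$ we get
$$
(z\bI-\bT)\bu_h=\bv_h-(\bT-\bT_h)\bu_h,
$$
and, since $z\in\rho(\bT)$, applying $R_z(\bT)$ yields the key identity
$$
\bu_h=R_z(\bT)\bv_h-R_z(\bT)(\bT-\bT_h)\bu_h.
$$
Taking $\HdivO$-norms and using that $\bu_h\in\bV_h$ (so that $\|(\bT-\bT_h)\bu_h\|_{\divO}\le\|\bT-\bT_h\|_h\,\|\bu_h\|_{\divO}$) gives
$$
\|\bu_h\|_{\divO}\le\|R_z(\bT)\|\,\|\bv_h\|_{\divO}+\|R_z(\bT)\|\,\|\bT-\bT_h\|_h\,\|\bu_h\|_{\divO}.
$$

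Next I would argue that $M:=\sup_{z\in F}\|R_z(\bT)\|<\infty$. The map $z\mapsto R_z(\bT)$ is analytic, hence norm-continuous, on the open set $\rho(\bT)$, and for $|z|>\|\bT\|$ one has $\|R_z(\bT)\|\le(|z|-\|\bT\|)^{-1}\to 0$. Therefore $\|R_z(\bT)\|$ is bounded on any closed subset of $\rho(\bT)$; in particular on $F$. This is the most delicate point of the argument, because $F$ is only closed (not compact), and it is where the behaviour of the resolvent at infinity is needed.

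Finally, by Corollary~\ref{cotaT} we may choose $h_0>0$ so that $M\|\bT-\bT_h\|_h\le Mh^s\,C_{\ast}\le \tfrac12$ for all $h<h_0$. Absorbing the second term on the right-hand side of the displayed inequality yields
$$
\|\bu_h\|_{\divO}\le 2M\|\bv_h\|_{\divO},
$$
uniformly in $z\in F$ and $\bv_h\in\bV_h$. Taking the supremum over $\bv_h\in\bV_h\setminus\{\0\}$ delivers the claim with $C:=2M$. The rest of the reasoning is routine; the only nontrivial ingredients are the uniform boundedness of $\|R_z(\bT)\|$ on $F$ and property \textbf{P1}, both of which are already available.
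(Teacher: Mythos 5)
Your proof is correct and is essentially the argument of \cite[Lemma~1]{Raviart1}, which the paper invokes without reproducing: the resolvent identity $\bu_h=R_z(\bT)\bv_h-R_z(\bT)(\bT-\bT_h)\bu_h$, the uniform bound $M:=\sup_{z\in F}\|R_z(\bT)\|<\infty$ (valid because $F$ is closed, $\sp(\bT)$ is compact so $\mathrm{dist}(F,\sp(\bT))>0$, and the resolvent decays at infinity), and absorption of the perturbation term via \textbf{P1}. The only cosmetic point is that setting $\bu_h:=R_z(\bT_h)\bv_h$ presupposes the invertibility of $z\bI-\bT_h$ on $\bV_h$; it is cleaner to read your computation as the a priori estimate $\|\bu_h\|_{\divO}\le 2M\|(z\bI-\bT_h)\bu_h\|_{\divO}$ for all $\bu_h\in\bV_h$ and $h<h_0$, which gives injectivity and hence, by finite-dimensionality of $\bV_h$, invertibility together with the stated uniform resolvent bound.
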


\begin{theorem}
Let $U\subset\C$ be an open set containing $\sp(\bT)$. Then, there
exists $h_0>0$ such that $\sp(\bT_h)\subset U$ for all $h<h_0$.
\end{theorem}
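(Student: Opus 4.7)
The plan is to deduce the result directly from Lemma~\ref{lema1RT}, exploiting the fact that the operators $\bT_h$ are uniformly bounded. The strategy is to show that the complement of $U$ (intersected with a large disk that traps all the discrete spectra) lies in $\rho(\bT_h)$ for all sufficiently small $h$.

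First, I would record a uniform boundedness statement for $\bT_h$. The bilinear form $a_h(\cdot,\cdot)$ is uniformly coercive on $\bV_h$ by Lemma~\ref{ha-elipt-disc}, while $b_h(\cdot,\cdot)$ is uniformly continuous by \eqref{consistencia2}. Hence there exists $M>0$, independent of $h$, such that $\|\bT_h\bv_h\|_{\divO}\le M\|\bv_h\|_{\divO}$ for every $\bv_h\in\bV_h$. Consequently $\sp(\bT_h)\subset\overline{D_M}$, where $D_M:=\{z\in\C:\ |z|\le M\}$.

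Next, I would define the closed set
\[
F:=\overline{D_M}\setminus U.
\]
Because $U\supset\sp(\bT)$ is open, $F$ is a closed (bounded) subset of the resolvent set $\rho(\bT)$. Property \textbf{P1} has been established in Corollary~\ref{cotaT}, so Lemma~\ref{lema1RT} applies to this $F$: there exist constants $C>0$ and $h_0>0$, independent of $h$, such that for every $h<h_0$ and every $z\in F$ the resolvent $R_z(\bT_h)$ is well defined on $\bV_h$ with
\[
\sup_{\bv_h\in\bV_h}\left\|R_z(\bT_h)\bv_h\right\|_{\divO}
\le C\left\|\bv_h\right\|_{\divO}.
\]

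Finally, the existence of $R_z(\bT_h)$ as a bounded operator on the finite-dimensional space $\bV_h$ means that $z\bI-\bT_h$ is invertible, i.e.\ $z\in\rho(\bT_h)$. Hence $F\cap\sp(\bT_h)=\emptyset$ for every $h<h_0$, which combined with $\sp(\bT_h)\subset\overline{D_M}$ forces $\sp(\bT_h)\subset U$, as claimed. The only mildly delicate step is the uniform bound on $\|\bT_h\|$; everything else is a direct reading of Lemma~\ref{lema1RT}, so no real obstacle is expected.
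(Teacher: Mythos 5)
Your proposal is correct and follows exactly the route the paper intends: the theorem is stated there as a direct consequence of property \textbf{P1} via \cite[Theorem~1]{Raviart1}, whose proof is precisely your argument (uniform bound on $\bT_h$ to confine the discrete spectra to a fixed disk, then Lemma~\ref{lema1RT} applied to the closed set $F=\overline{D_M}\setminus U\subset\rho(\bT)$ to exclude $F$ from $\sp(\bT_h)$ for $h$ small). The uniform bound on $\|\bT_h\|$ that you single out is already recorded in the paper right after the definition of $\bT_h$, so nothing delicate remains.
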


An immediate consequence of this theorem and Corollary~\ref{cotaT} is
that the proposed virtual element method does not introduce spurious
modes with eigenvalues interspersed among those with a physical meaning.
Let us remark that such a spectral pollution could be in principle
expected from the fact that the corresponding solution operator $\bT$
has an infinite-dimensional eigenvalue $\mu=1$ (see
\cite{BDMR95,BHPR2001,Boffi}).

By applying the results from \cite[Section 2]{Raviart1} to our problem,
we conclude the spectral convergence of $\bT_h$ to $\bT$ as $h\to0$.
More precisely, let $\mu\in(0,1)$ be an isolated eigenvalue of $\bT$
with multiplicity $m$ and let $\CC$ be an open circle in the complex
plane centered at $\mu$, such that $\mu$ is the only eigenvalue of $\bT$
lying in $\CC$ and $\partial\CC\cap\sp(\bT)=\emptyset$. Then, according
to \cite[Section 2]{Raviart1}, for $h$ small enough there exist $m$
eigenvalues $\mu^{(1)}_h,\dots,\mu^{(m)}_h$ of $\bT_h$ (repeated
according to their respective multiplicities) which lie in $\CC$. Therefore, these eigenvalues $\mu^{(1)}_h,\dots,\mu^{(m)}_h$ converge to $\mu$ as $h$ goes to zero.

Our next step is to obtain error estimates for the spectral
approximation. The classical reference for this issue on non-compact
operators is \cite{Raviart2}. However, we cannot apply the results from
this reference directly to our problem, because of the variational
crimes in the bilinear forms used to define the operator $\bT_h$.
Therefore, we need to extend the results from this reference to our
case. With this purpose, we follow an approach inspired by those of
\cite{BDRS,LMR}.

Consider the eigenspace $\bE$ of $\bT$ corresponding to $\mu$ and the
$\bT_h$-invariant subspace $\bE_h$ spanned by the eigenspaces of $\bT_h$
corresponding to $\mu^{(1)}_h,\dots,\mu^{(m)}_h$. As a consequence of
Lemma~\ref{lema1RT}, we have for $h$ small enough
\begin{equation}
\label{resol}
\left\|\left(z\bI-\bT_h\right)\bv_h\right\|_{\divO}
\geq C\left\|\bv_h\right\|_{\divO}
\qquad\forall\bv_h\in\bV_h,
\quad\forall z\in\partial\CC.
\end{equation}
Let $\bP_h:\;\bV\longrightarrow\bV_{h}\hookrightarrow\bV$ be the projector
with range $\bV_h$ defined by the relation 
$$
a(\bP_h\bu-\bu,\bv_h)=0
\qquad\forall\bv_h\in\bV_h.
$$
In our case, the bilinear form $a(\cdot,\cdot)$ is the inner product of
$\bV$, so that
$\left\|\bP_h\bu\right\|_{\divO}\leq\left\|\bu\right\|_{\divO}$ and 
$$
\left\|\bu-\bP_h\bu\right\|_{\divO}
=\delta(\bu,\bV_{h})\qquad\forall\bu\in\bV.
$$

Now, we define
$\widehat{\bT}_{h}:=\bT_{h}\bP_h:\;\bV\longrightarrow\bV_{h}$. Notice
that $\sp(\widehat{\bT}_h)=\sp(\bT_h)\cup\{0\}$. Furthermore, we have
the following result (cf. \cite[Lemma~1]{Raviart2}).

\begin{lemma}
\label{cotare} 
There exist $h_0>0$ and $C>0$ such that
$$
\big\|R_z(\widehat{\bT}_h)\big\|_{\divO}\leq C
\qquad\forall z\in\partial\CC,
\quad\forall h\leq h_{0}.
$$
\end{lemma}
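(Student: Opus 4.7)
The plan is to argue along the classical lines of \cite[Lemma~1]{Raviart2}, exploiting the fact that $\bP_h$ is a projector with range $\bV_h$ so that $\bT_h\bP_h\bu\in\bV_h$ for every $\bu\in\bV$, and then reducing the bound to the coercivity-type estimate \eqref{resol} already available on $\bV_h$.

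First I would observe that for $h$ small enough every $z\in\partial\CC$ lies in the resolvent set of $\widehat{\bT}_h$. Indeed $\widehat{\bT}_h$ restricted to $\bV_h$ coincides with $\bT_h$ (since $\bP_h$ acts as the identity on $\bV_h$) and $\widehat{\bT}_h$ vanishes on the kernel of $\bP_h$, so $\sp(\widehat{\bT}_h)=\sp(\bT_h)\cup\{0\}$. Because $\mu\in(0,1)$, the circle $\CC$ can (and will) be fixed small enough so that $0\notin\overline{\CC}$, and by the preceding theorem $\partial\CC\cap\sp(\bT_h)=\emptyset$ for $h\le h_0$.

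Next, fix $\bv\in\bV$ and set $\bu:=R_z(\widehat{\bT}_h)\bv$, so that
$$
z\bu-\bT_h\bP_h\bu=\bv.
$$
I would split $\bu=\bP_h\bu+(\bI-\bP_h)\bu$ and estimate the two pieces separately. Since $\bT_h\bP_h\bu\in\bV_h$, applying $\bI-\bP_h$ to the resolvent equation kills that term, yielding
$$
z(\bI-\bP_h)\bu=(\bI-\bP_h)\bv,
$$
so that $\|(\bI-\bP_h)\bu\|_{\divO}\le|z|^{-1}\|\bv\|_{\divO}\le C\|\bv\|_{\divO}$, the last bound holding uniformly for $z\in\partial\CC$ precisely because $\CC$ was chosen away from $0$. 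On the other hand, applying $\bP_h$ and using $\bP_h(\bT_h\bP_h\bu)=\bT_h\bP_h\bu$ gives
$$
(z\bI-\bT_h)\bP_h\bu=\bP_h\bv\in\bV_h,
$$
so the discrete resolvent estimate \eqref{resol} yields
$$
\|\bP_h\bu\|_{\divO}\le C\|\bP_h\bv\|_{\divO}\le C\|\bv\|_{\divO},
$$
where in the last step I used that $\bP_h$ is an $a$-orthogonal projector and $a(\cdot,\cdot)$ is equivalent to the $\HdivO$-norm, so $\bP_h$ is uniformly bounded in $\HdivO$. Adding the two estimates and taking the supremum over $\bv$ yields the claim.

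The only subtlety I expect concerns making sure \eqref{resol} is actually applicable uniformly on the whole compact set $\partial\CC$ and that the constant in the bound of $\bP_h\bv$ does not depend on $h$; the former is covered by Lemma~\ref{lema1RT} with $F=\partial\CC\subset\rho(\bT)$, and the latter is automatic because $a(\cdot,\cdot)$ is the inner product of $\bV$. There is no real difficulty beyond keeping track of these projection identities.
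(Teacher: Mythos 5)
Your proof is correct and follows essentially the same route as the paper: both arguments split via the $a$-orthogonal projector $\bP_h$, apply the discrete resolvent bound \eqref{resol} to the $\bP_h$ component (using the identity $(z\bI-\bT_h)\bP_h\bu=\bP_h(z\bI-\widehat{\bT}_h)\bu$), and control the complementary piece by the fact that $\partial\CC$ is bounded away from $0$. The only cosmetic difference is that the paper establishes the lower bound $\|(z\bI-\widehat{\bT}_h)\bv\|_{\divO}\ge C\|\bv\|_{\divO}$ and invokes compactness of $\widehat{\bT}_h$ to get invertibility, whereas you justify invertibility first via $\sp(\widehat{\bT}_h)=\sp(\bT_h)\cup\{0\}$ and then bound the resolvent directly; both are valid.
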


\begin{proof}
Since $\widehat{\bT}_h$ is compact, it suffices to check that
$\big\|(z\bI-\widehat{\bT}_h)\bv\big\|_{\divO}
\geq C\left\|\bv\right\|_{\divO}$ $\,\forall\bv\in\bV$ and 
$\,\forall z\in\partial\CC$. By using \eqref{resol} and basic properties
of the projector $\bP_h$, we obtain
\begin{align*}
\left\|\bv\right\|_{\divO} 
& \leq\left\|\bP_h\bv\right\|_{\divO}
+\left\|\bv-\bP_h\bv\right\|_{\divO}
\\
& \leq C\left\|\left(z\bI-\bT_{h}\right)\bP_h\bv\right\|_{\divO}
+\left|z\right|^{-1}\left\|z\left(\bv-\bP_h\bv\right)\right\|_{\divO}
\\
& \leq C\big\|\big(z\bI-\widehat{\bT}_h\big)\bP_h\bv\big\|_{\divO}
+\left|z\right|^{-1}\big\|z\left(\bv-\bP_h\bv\right)-\widehat{\bT}_{h}
\left(\bv-\bP_h\bv\right)\big\|_{\divO}
\\
& =C\big\|\bP_h\big(z\bI-\widehat{\bT}_h\big)\bv\big\|_{\divO}
+\left|z\right|^{-1}\big\|\left(\bI-\bP_h\right)
\big(z\bI-\widehat{\bT}_h\big)\bv)\big\|_{\divO}
\\
& \leq C\big\|\big(z\bI-\widehat{\bT}_h\big)\bv\big\|_{\divO},
\end{align*}
where we have used that the curve $\partial\CC$ is bounded away from
$0$.
$\hfill\qed$
\end{proof}

Next, we introduce the following spectral projectors (the second one,
is well defined at least for $h$ small enough):
\begin{itemize}
\item the spectral projector of $\bT$ relative to $\mu$:\quad
$\displaystyle\bF:=\dfrac{1}{2\pi i}\int_{\partial\CC}R_z(\bT)\,dz$;
\item the spectral projector of $\widehat{\bT}_h$
relative to $\mu_{h}^{(1)},\ldots,\mu_{h}^{(m)}$:
\quad $\widehat{\bF}_h:=\dfrac{1}{2\pi i}
\displaystyle\int_{\partial\CC}R_z(\widehat{\bT}_h)\,dz$.
\end{itemize}
We also introduce the quantities
$$
\gamma_{h}:=\delta(\bE,\bV_{h})
\qquad\text{ and }\qquad
\eta_h:=\sup_{\bw\in\bE}
\dfrac{\left\|\bw-\bpi_h\bw\right\|_{0,\O}}
{\left\|\bw\right\|_{\divO}}.
$$
These two quantities are bounded as follows:
\begin{equation}
\label{cotafin}
\gamma_{h}\leq Ch^{\min\{\tilde{s},k+1\}}
\qquad\text{ and }\qquad
\eta_h\leq Ch^{\min\{\tilde{s},k+1\}},
\end{equation}
where $\tilde{s}>1/2$ is such that $\bE\subset[\H^{\tilde{s}}(\O)]^2$
(cf. Theorem~\ref{CHAR_SP}). In fact, the first estimate follows from
Lemmas~\ref{lemmainter} and \ref{lemmainterV_I} and
Theorem~\ref{CHAR_SP}(ii), whereas the latter follows from the fact that
$\bE\subset\bG\cap\bV$, Lemma~\ref{A} and Theorem~\ref{CHAR_SP}(ii)
again.

The following estimate is a variation of Lemma~3 from \cite{Raviart2})
that will be used to prove convergence of the eigenspaces.

\begin{lemma}
\label{espaciosor}
There exist positive constants $h_0$ and $C$ such that, for all $h<h_0$,
$$
\big\|\big(\bF-\widehat{\bF}_h\big)|_{\bE}\big\|_{\divO}
\leq C\big\|\big(\bT-\widehat{\bT}_h\big)|_{\bE}\big\|_{\divO}
\leq C\left(\gamma_{h}+\eta_h\right).
$$
\end{lemma}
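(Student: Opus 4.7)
My plan is to handle the two inequalities separately, since they rest on quite different techniques. For the first one, $\|(\bF - \widehat{\bF}_h)|_{\bE}\|_{\divO} \leq C\|(\bT - \widehat{\bT}_h)|_{\bE}\|_{\divO}$, I will invoke standard resolvent calculus. Writing $\bF - \widehat{\bF}_h$ as the contour integral of $R_z(\bT) - R_z(\widehat{\bT}_h)$ around $\partial\CC$ and applying the resolvent identity $R_z(\bT) - R_z(\widehat{\bT}_h) = R_z(\widehat{\bT}_h)(\bT - \widehat{\bT}_h)R_z(\bT)$, the crucial observation is that on the eigenspace $\bE$ the continuous resolvent acts by scalar multiplication, $R_z(\bT)\bw = (z-\mu)^{-1}\bw$. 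Pulling out this scalar and estimating the integral using the finite length of $\partial\CC$, the uniform resolvent bound of Lemma~\ref{cotare}, and the fact that $\partial\CC$ is bounded away from $\mu$, yields the first inequality.

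The second inequality $\|(\bT - \widehat{\bT}_h)|_{\bE}\|_{\divO} \leq C(\gamma_h + \eta_h)$ is the heart of the lemma. My plan is to mimic the energy argument of Lemma~\ref{lemcotste}, but to track the source term more carefully so that the rate $\eta_h$ actually appears in place of the coarser $h^s$. Fix $\bw \in \bE$ and set $\bu := \bT\bw = \mu\bw$, $\bu_h := \widehat{\bT}_h\bw = \bT_h\bP_h\bw$, and let $\bu_I \in \bV_h$ denote the interpolant of $\bu$ defined by \eqref{uno}--\eqref{dos}. By the uniform ellipticity of $a_h(\cdot,\cdot)$ (Lemma~\ref{ha-elipt-disc}), it suffices to estimate $a_h(\bu_h - \bu_I, \bv_h)$ for $\bv_h \in \bV_h$. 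Using the defining equation of $\bT_h$, the continuous identity $a(\bu,\bv_h) = b(\bw,\bv_h)$ (valid since $\bv_h \in \bV_h \subset \bV$), the commuting diagram $\div\bu_I = P_k(\div\bu)$ of Lemma~\ref{lemmainter} (which annihilates the divergence cross-terms), and the consistency property \eqref{consistencia1} of $b_h^E$, I would reorganize $a_h(\bu_h - \bu_I, \bv_h)$ as the sum of two consistency defects $b_h(\bP_h\bw, \bv_h) - b(\bP_h\bw, \bv_h)$ and $b(\bu_I, \bv_h) - b_h(\bu_I, \bv_h)$, a projection-in-source term $b(\bP_h\bw - \bw, \bv_h)$, and a pure interpolation term $\int_\O (\bu - \bu_I)\cdot\bv_h$.

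I would then bound each piece following the derivation of \eqref{estabilitiss}. The consistency defects are controlled by $C\|\boldsymbol{g} - \bpi_h\boldsymbol{g}\|_{0,\O}\|\bv_h\|_{0,\O}$ with $\boldsymbol{g}$ equal to $\bP_h\bw$ or $\bu_I$. Inserting $\bw$ and $\bpi_h\bw$ into that norm and exploiting the $\L^2$-stability of the element-wise projector $\bpi_h$ splits each such quantity into a contribution from $\bw - \bpi_h\bw$ (which is exactly $\eta_h\|\bw\|_{\divO}$) plus terms of the form $\bw - \bP_h\bw$ or $\bu - \bu_I$, each bounded by $C\gamma_h\|\bw\|_{\divO}$ via Lemmas~\ref{lemmainter}--\ref{lemmainterV_I} together with the regularity in Theorem~\ref{CHAR_SP}(ii). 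The projection-in-source and pure interpolation terms are bounded directly by $C\gamma_h\|\bw\|_{\divO}\|\bv_h\|_{\divO}$. Collecting everything, dividing by $\|\bv_h\|_{\divO}$, and using the triangle inequality with $\|\bu - \bu_I\|_{\divO} \leq C\gamma_h\|\bw\|_{\divO}$ produces the claimed estimate.

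The main obstacle I anticipate is precisely the point where the consistency defect applied to $\bP_h\bw$ has to be upgraded from the generic rate $h^s$ of Lemma~\ref{lemcotste} to the sharper rate $\eta_h$. This requires splitting $\|\bP_h\bw - \bpi_h(\bP_h\bw)\|_{0,\O}$ by simultaneously inserting $\bw$ and $\bpi_h\bw$, and it crucially uses both the higher regularity of eigenfunctions in $\bE$ supplied by Theorem~\ref{CHAR_SP}(ii) and the polynomial approximation estimate of Lemma~\ref{A}; without this refinement, the second inequality would only hold with $\gamma_h + h^s$ on the right-hand side, losing the sharpness that will be needed later to derive the double-order error estimate for the eigenvalues.
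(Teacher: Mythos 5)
Your treatment of the first inequality (resolvent identity around $\partial\CC$, the scalar action of $R_z(\bT)$ on $\bE$, and the uniform bound of Lemma~\ref{cotare}) is exactly what the paper means by ``the arguments of \cite[Lemma~3]{Raviart2}''. For the second inequality your plan is also, in substance, the paper's proof: the paper invokes the first Strang lemma, which is precisely the ellipticity-plus-consistency energy computation you carry out by hand, and the decisive refinement --- splitting the consistency defect by inserting $\bw$ and $\bpi_h\bw$ so that $\eta_h$ appears in place of the crude $h^s$ of Lemma~\ref{lemcotste} --- is identical to the chain of inequalities the paper writes for $\left|b(\bP_h\bw,\bv_h)-b_h(\bP_h\bw,\bv_h)\right|$.

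There is one genuine, though minor, flaw: you take the interpolant $\bu_I$ as comparison function, whereas the paper uses the projection $\bP_h$, for which $\left\|\bw-\bP_h\bw\right\|_{\divO}=\delta(\bw,\bV_h)\le\gamma_h\left\|\bw\right\|_{\divO}$ holds by the very definition of $\gamma_h=\delta(\bE,\bV_h)$. Your claimed bound $\left\|\bu-\bu_I\right\|_{\divO}\le C\gamma_h\left\|\bw\right\|_{\divO}$ is not justified: $\gamma_h$ is a best-approximation quantity and does not dominate an interpolation error, so the estimate you actually prove has the interpolation error of $\bE$ on the right-hand side rather than $\gamma_h$. This costs nothing asymptotically, since by Lemmas~\ref{lemmainter} and \ref{lemmainterV_I} together with Theorem~\ref{CHAR_SP}(ii) that interpolation error obeys the same $h^{\min\{\tilde{s},k+1\}}$ bound as $\gamma_h$ in \eqref{cotafin}, and the subsequent corollary is unaffected; but to obtain the lemma literally as stated you should either replace $\bu_I$ by $\bP_h\bu$ throughout (using that $a(\cdot,\cdot)$ is the $\HdivO$ inner product, so $\bP_h$ needs no commuting-diagram property --- the divergence cross-terms are handled by $a(\bu-\bP_h\bu,\bv_h)=0$ instead), or accept a version of the lemma with $\gamma_h$ enlarged to the interpolation error of $\bE$.
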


\begin{proof}
The first inequality is proved using the same arguments of
\cite[Lemma~3]{Raviart2} and Lemma~\ref{cotare}. For the other estimate,
let $\bf\in\bE$, $\bw:=\bT\bf$ and
$\bw_h:=\widehat{\bT}_h\bf=\bT_h\bP_h\bf$. Note that, by
Theorem~\ref{ortho}(ii), $\bf\in\nabla(\H^{1+\tilde{s}}(\O))$,
$\tilde{s}>1/2$. By using the first Strang lemma (see, for instance,
\cite[Theorem~4.1.1]{ciarlet}), we have
\begin{align*}
\left\|\bw-\bw_h\right\|_{\divO} 
& \leq C\left(\left\|\bw-\bP_h\bw\right\|_{\divO}
+\sup_{\bv_h\in\bV_h}
\dfrac{\left|b(\bP_h\bw,\bv_h)-b_h(\bP_h\bw,\bv_h)\right|}
{\left\|\bv_h\right\|_{\divO}}\right.
\\
& \quad\quad\quad
+\left.\sup_{\bv_h\in\bV_h}
\dfrac{\left|b(\bf,\bv_h)-b_h(\bP_h\bf,\bv_h)\right|}
{\left\|\bv_h\right\|_{\divO}}\right)
\end{align*}
and by proceeding as in the proof of Lemma~\ref{lemcotste} to derive
\eqref{estabilitiss}, we obtain
\begin{align*}
\left|b(\bP_h\bw,\bv_h)-b_h(\bP_h\bw,\bv_h)\right| 
& \leq C\sum_{E\in\CT_h}
\big\|\bP_h\bw-\bpi_h^E\bP_h\bw\big\|_{0,E}
\left\|\bv_h\right\|_{0,E}
\\
& \leq C\sum_{E\in\CT_h}
\big\|\big(\bI-\bpi_h^E\big)
\left(\bP_h\bw-\bw\right)
+\big(\bI-\bpi_h^E\big)\bw\big\|_{0,E}
\\
& \leq C\left(\left\|\bw-\bP_h\bw\right\|_{0,\O}
+\left\|\bw-\bpi_h\bw\right\|_{0,\O}\right)
\left\|\bv_h\right\|_{\divO}.
\end{align*}
On the other hand,
\begin{align*}
\left|b(\bf,\bv_h)-b_h(\bP_h\bf,\bv_h)\right| 
& \leq\left|b(\bf-\bP_h\bf,\bv_h)\right|
+\left|b(\bP_h\bf,\bv_h)-b_h(\bP_h\bf,\bv_h)\right|
\\
& \leq C\left(\left\|\bf-\bP_h\bf\right\|_{0,\O}
\left\|\bv_h\right\|_{0,\O}\right)
+\left|b(\bP_h\bf,\bv_h)-b_h(\bP_h\bf,\bv_h)\right|
\\
& \leq C\left(\left\|\bf-\bP_h\bf\right\|_{0,\O}
+\left\|\bf-\bpi_h\bf\right\|_{0,\O}\right)\left\|\bv_h\right\|_{\divO},
\end{align*}
where, for the last inequality, we have used the same argument as above.
Then, we have
\begin{align*}
\left\|\bw-\bw_h\right\|_{\divO}
& \leq C\left(\left\|\bw-\bP_h\bw\right\|_{\divO}
+\left\|\bw-\bpi_h\bw\right\|_{0,\O}\right.
\\
& \phantom{\leq C\Big(}
\left.+\left\|\bf-\bP_h\bf\right\|_{0,\O}
+\left\|\bf-\bpi_h\bf\right\|_{0,\O}\right)
\\
& \leq C\left(\gamma_h+\left\|\bw-\bpi_h\bw\right\|_{0,\O}
+\left\|\bf-\bpi_h\bf\right\|_{0,\O}\right)
\\
& =C\big(\gamma_h+\big(1+\mu^{-1}\big)
\left\|\bw-\bpi_h\bw\right\|_{0,\O}\big)
\\
& \leq C\left(\gamma_h+\eta_h\right),
\end{align*}
where we have used that, for $\bf\in\bE$, $\bw:=\bT\bf=\mu\bf$. Thus, we
conclude the proof.
$\hfill\qed$
\end{proof}

To prove an error estimate for the eigenspaces, we also need the
following result.

\begin{lemma}
\label{inversa}
Let 
$$
\boldsymbol{\Lambda}_h
:=\widehat{\bF}_h|_{\bE}:\;\bE\longrightarrow\bE_h.
$$
For $h$ small enough, the operator $\boldsymbol{\Lambda}_h$ is
invertible and there exists $C$ independent of $h$ such that
$$
\big\|\boldsymbol{\Lambda}_h^{-1}\big\|\leq C.
$$
\end{lemma}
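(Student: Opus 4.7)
The plan is to exploit the standard spectral-theory argument: since $\bE$ and $\bE_h$ are finite dimensional with the same dimension $m$, it suffices to show that $\boldsymbol{\Lambda}_h$ is injective for $h$ sufficiently small, and this injectivity will come with a quantitative lower bound that yields the desired uniform estimate on the inverse.

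First, I would recall that for every $\bw \in \bE$ the spectral projector $\bF$ acts as the identity, so $\bF \bw = \bw$. Hence, for any $\bw \in \bE$,
\begin{equation*}
\|\boldsymbol{\Lambda}_h \bw\|_{\divO}
= \|\widehat{\bF}_h \bw\|_{\divO}
\geq \|\bw\|_{\divO} - \|(\bF - \widehat{\bF}_h)\bw\|_{\divO}.
\end{equation*}
Applying Lemma~\ref{espaciosor} together with the bounds \eqref{cotafin} for $\gamma_h$ and $\eta_h$, one obtains
\begin{equation*}
\|(\bF - \widehat{\bF}_h)\bw\|_{\divO}
\leq C(\gamma_h + \eta_h)\,\|\bw\|_{\divO}
\leq C h^{\min\{\tilde{s},k+1\}}\,\|\bw\|_{\divO}.
\end{equation*}
Combining the two estimates, there exists $h_0>0$ such that, for every $h<h_0$,
\begin{equation*}
\|\boldsymbol{\Lambda}_h \bw\|_{\divO} \geq \tfrac{1}{2}\|\bw\|_{\divO}
\qquad\forall\bw\in\bE.
\end{equation*}

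This inequality immediately implies that $\boldsymbol{\Lambda}_h$ is injective on $\bE$. Now, by construction, $\bE_h = \widehat{\bF}_h(\bV)$ has dimension $m$ (the sum of the multiplicities of $\mu_h^{(1)},\ldots,\mu_h^{(m)}$), which coincides with $\dim \bE$. Therefore injectivity forces $\boldsymbol{\Lambda}_h$ to be a bijection from $\bE$ onto $\bE_h$. Moreover, the explicit lower bound above yields $\|\boldsymbol{\Lambda}_h^{-1}\bv\|_{\divO} \leq 2 \|\bv\|_{\divO}$ for every $\bv \in \bE_h$, which is the desired uniform bound.

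The only potentially subtle point is the dimensional equality $\dim \bE_h = \dim \bE = m$. This is the standard consequence of property \textbf{P1} together with the residue-calculus characterization of $\widehat{\bF}_h$: by the results already invoked from \cite[Section~2]{Raviart1}, for $h$ small enough exactly $m$ eigenvalues of $\widehat{\bT}_h$ (counted with multiplicity) lie inside $\partial\CC$, so $\mathop{\mathrm{rank}}(\widehat{\bF}_h)=m$, and since $\widehat{\bF}_h(\bV) = \bE_h$ this gives $\dim \bE_h = m$. Once this is in place, the rest of the proof is a straightforward three-line verification; the conceptual work was already absorbed into Lemmas~\ref{cotare} and \ref{espaciosor}.
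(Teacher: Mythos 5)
Your proof is correct and takes essentially the same route as the paper: the paper's proof simply invokes the argument of Lemma~2 of \cite{Raviart2} together with Lemma~\ref{espaciosor} and the fact that $\gamma_h+\eta_h\to 0$, which is exactly the reverse-triangle-inequality lower bound plus dimension count that you spell out. No issues.
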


\begin{proof}
It follows by proceeding as in the proof of Lemma~2 from
\cite{Raviart2}, by using Lemma~\ref{espaciosor} and the fact that
$\gamma_{h}\rightarrow 0$ and $\eta_{h}\rightarrow 0$ as $h\rightarrow0$
(cf. \eqref{cotafin}).
$\hfill\qed$
\end{proof}

The following theorem shows that the eigenspace of $\bT_h$ (which
coincides with that of $\widehat{\bT}_h$) approximates the eigenspace of
$\bT$.

\begin{theorem}
\label{cotaespaci}
There exists $C>0$ such that, 
\begin{equation*}
\hdel(\bE,\bE_h)\leq C\left(\gamma_h+\eta_h\right).
\end{equation*}
\end{theorem}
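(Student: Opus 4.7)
The plan is to bound the two one-sided gaps $\delta(\bE,\bE_h)$ and $\delta(\bE_h,\bE)$ separately, using the spectral projectors $\bF$ and $\widehat{\bF}_h$ as the natural ``correspondence'' between $\bE$ and $\bE_h$, and then invoking the two previous lemmas (Lemma~\ref{espaciosor} and Lemma~\ref{inversa}) to quantify these correspondences.

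For the first direction, I would pick $\bw\in\bE$ with $\left\|\bw\right\|_{\divO}=1$. Since $\bF$ is the spectral projector associated with $\mu$, we have $\bF\bw=\bw$, and by construction $\widehat{\bF}_h\bw\in\bE_h$. Therefore
$$
\delta(\bw,\bE_h)\le\big\|\bw-\widehat{\bF}_h\bw\big\|_{\divO}
=\big\|(\bF-\widehat{\bF}_h)\bw\big\|_{\divO}
\le\big\|(\bF-\widehat{\bF}_h)|_{\bE}\big\|_{\divO},
$$
and Lemma~\ref{espaciosor} gives $\delta(\bE,\bE_h)\le C(\gamma_h+\eta_h)$. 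No serious obstacle here.

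For the reverse direction $\delta(\bE_h,\bE)$, I would exploit the invertibility of $\boldsymbol{\Lambda}_h=\widehat{\bF}_h|_{\bE}:\bE\to\bE_h$ guaranteed by Lemma~\ref{inversa}. Given any $\bw_h\in\bE_h$ with $\left\|\bw_h\right\|_{\divO}=1$, there exists a unique $\bv:=\boldsymbol{\Lambda}_h^{-1}\bw_h\in\bE$, and the uniform bound $\left\|\boldsymbol{\Lambda}_h^{-1}\right\|\le C$ implies $\left\|\bv\right\|_{\divO}\le C$. Then, using $\bF\bv=\bv$ and $\widehat{\bF}_h\bv=\bw_h$,
$$
\delta(\bw_h,\bE)\le\left\|\bw_h-\bv\right\|_{\divO}
=\big\|\widehat{\bF}_h\bv-\bF\bv\big\|_{\divO}
\le\big\|(\bF-\widehat{\bF}_h)|_{\bE}\big\|_{\divO}\left\|\bv\right\|_{\divO}
\le C(\gamma_h+\eta_h),
$$
again by Lemma~\ref{espaciosor}. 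Taking the supremum over $\bw_h$ gives $\delta(\bE_h,\bE)\le C(\gamma_h+\eta_h)$.

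Combining both estimates through the definition
$\hdel(\bE,\bE_h)=\max\{\delta(\bE,\bE_h),\delta(\bE_h,\bE)\}$ yields the claim. The only mildly delicate step is the second one, where one must remember that $\boldsymbol{\Lambda}_h$ is a bijection between the \emph{finite-dimensional} spaces $\bE$ and $\bE_h$ (both of dimension $m$ for $h$ small) so that every element of $\bE_h$ is indeed captured as $\widehat{\bF}_h\bv$ for some $\bv\in\bE$; this is exactly the content of Lemma~\ref{inversa} together with the fact that $\dim\bE_h=\dim\bE=m$ for $h$ sufficiently small. Everything else is a routine application of already established results.
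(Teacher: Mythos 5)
Your proof is correct and follows essentially the same route as the paper, which simply invokes the argument of Theorem~1 of Descloux--Nassif--Rappaz together with Lemmas~\ref{espaciosor} and \ref{inversa}; your two one-sided gap bounds (via $\widehat{\bF}_h$ on $\bE$, and via $\boldsymbol{\Lambda}_h^{-1}$ on $\bE_h$) are exactly that argument written out. The only implicit point worth noting is that the range of $\widehat{\bF}_h$ coincides with $\bE_h$ because the eigenvalues $\mu_h^{(i)}$ are nonzero, so the corresponding eigenspaces of $\widehat{\bT}_h=\bT_h\bP_h$ and of $\bT_h$ agree -- a fact the paper itself records.
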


\begin{proof}
It follows by arguing exactly as in the proof of Theorem~1 from
\cite{Raviart2} and using Lemmas~\ref{espaciosor} and \ref{inversa}.
$\hfill\qed$
\end{proof}

Finally, we will prove a double-order error estimate for the
eigenvalues. With this aim, let $\l:=\dfrac{1}{\mu}-1$ be the eigenvalue
of Problem~\ref{P1} with eigenspace $\bE$. Let
$\l_{h}^{i}:=\dfrac{1}{\mu_{h}^{i}}-1$, $i=1,\ldots,m$, be the
eigenvalues of Problem~\ref{P3} with invariant subspace $\bE_{h}$. We
have the following result.

\begin{theorem}
There exist positive constants $C$ and $h_0$ independent of $h$, such that, for $h<h_0$,
\begin{equation*}
\big|\l-\l_h^{(i)}\big|
\le C\,\big(\gamma_h^2+\eta_h^2\big),\qquad i=1,\ldots,m.
\end{equation*}
\end{theorem}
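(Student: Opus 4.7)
The plan is to derive an algebraic identity expressing $\l_h^{(i)}-\l$ as a sum of three terms, each of which will be shown to be quadratic in the quantities controlled by Theorem~\ref{cotaespaci} and by the approximation estimates for $\bpi_h$.

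Fix $i\in\{1,\ldots,m\}$ and let $\bw_h\in\bE_h$ be an eigenfunction of $\bT_h$ associated with $\mu_h^{(i)}=1/(\l_h^{(i)}+1)$, normalized by $b_h(\bw_h,\bw_h)=1$. The stability~\eqref{consistencia2} together with Lemma~\ref{ha-elipt-disc} and the identity $a_h(\bw_h,\bw_h)=\l_h^{(i)}+1$ show that $\|\bw_h\|_{\divO}$ is bounded independently of $h$. Theorem~\ref{cotaespaci} then lets me choose $\bw\in\bE$ with $\|\bw-\bw_h\|_{\divO}\le C(\gamma_h+\eta_h)$; after rescaling so that $b(\bw,\bw)=1$, the closeness is preserved up to a harmless multiplicative factor $1+O(\gamma_h+\eta_h)$.

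The key step is the algebraic identity. From $\l+1=a(\bw,\bw)$ and $\l_h^{(i)}+1=a_h(\bw_h,\bw_h)$ I write $\l_h^{(i)}-\l=[a_h(\bw_h,\bw_h)-a(\bw_h,\bw_h)]+[a(\bw_h,\bw_h)-a(\bw,\bw)]$. The first bracket equals $b_h(\bw_h,\bw_h)-b(\bw_h,\bw_h)=1-b(\bw_h,\bw_h)$ since $a_h$ and $a$ differ only in the mass term. For the second I use $a(\bw_h,\bw_h)-a(\bw,\bw)=a(\bw-\bw_h,\bw-\bw_h)-2a(\bw,\bw-\bw_h)$, the eigen-relation $a(\bw,\bv)=(\l+1)b(\bw,\bv)$, and the polarization identity $2b(\bw,\bw_h)=b(\bw,\bw)+b(\bw_h,\bw_h)-b(\bw-\bw_h,\bw-\bw_h)$ to arrive, after a short simplification, at
\begin{equation*}
\l_h^{(i)}-\l
=a(\bw-\bw_h,\bw-\bw_h)-(\l+1)\,b(\bw-\bw_h,\bw-\bw_h)
+\l\,\bigl[b(\bw_h,\bw_h)-b_h(\bw_h,\bw_h)\bigr].
\end{equation*}

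It remains to bound the three contributions. The first two are controlled by $C\|\bw-\bw_h\|_{\divO}^2\le C(\gamma_h+\eta_h)^2$. For the consistency term, the $\L^2$-projection property~\eqref{numero} gives $\int_E|\bw_h|^2-\int_E|\bpi_h^E\bw_h|^2=\|\bw_h-\bpi_h^E\bw_h\|_{0,E}^2$, which combined with~\eqref{20} on the stabilization yields $|b(\bw_h,\bw_h)-b_h(\bw_h,\bw_h)|\le C\|\bw_h-\bpi_h\bw_h\|_{0,\O}^2$. By the triangle inequality and the $\L^2$-contractivity of $\bpi_h$,
\begin{equation*}
\|\bw_h-\bpi_h\bw_h\|_{0,\O}
\le 2\,\|\bw-\bw_h\|_{0,\O}+\|\bw-\bpi_h\bw\|_{0,\O}
\le C(\gamma_h+\eta_h),
\end{equation*}
where the last bound uses the definition of $\eta_h$ and $\bw\in\bE$. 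Combining everything and using $(\gamma_h+\eta_h)^2\le 2(\gamma_h^2+\eta_h^2)$ yields the theorem. The main obstacle is precisely this consistency term: in a conforming self-adjoint setting the classical Babu\v{s}ka--Osborn identity produces only the first two contributions, whereas here the variational crime forces the extra piece $\l[b(\bw_h,\bw_h)-b_h(\bw_h,\bw_h)]$, and showing it is genuinely quadratic---not first order---relies crucially on the symmetric appearance of the $\L^2$-projection $\bpi_h^E$ in both arguments of $b_h^E$, which produces the square $\|\bw_h-\bpi_h\bw_h\|_{0,\O}^2$ rather than a first power.
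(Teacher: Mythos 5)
Your proof is correct and follows essentially the same strategy as the paper: an algebraic identity expressing $\l_h^{(i)}-\l$ as quadratic terms in $\|\bw-\bw_h\|_{\divO}$ plus the consistency defect $b(\bw_h,\bw_h)-b_h(\bw_h,\bw_h)$, with the latter shown to be genuinely quadratic via the Pythagorean identity for $\bpi_h^E$ and the stabilization bound \eqref{20}. The only differences are cosmetic — you normalize by $b_h(\bw_h,\bw_h)=1$ and reach the identity via polarization, while the paper normalizes $\|\bw_h\|_{\divO}=1$ and keeps the factor $b(\bw_h,\bw_h)$ explicit, bounding it below at the end; your parenthetical claim that the rescaling of $\bw$ is harmless is true but tacitly uses the consistency estimate you establish only later in the argument.
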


\begin{proof}
Let $\bw_h\in\bE_{h}$ be an eigenfunction corresponding to one of the
eigenvalues $\l_{h}^{(i)}$ ($i=1,\dots,m$) with
$\left\|\bw_h\right\|_{\divO}=1$. According to Theorem~\ref{cotaespaci},
$\delta(\bw_{h},\bE)\leq C\left(\gamma_h+\eta_h\right)$. It follows that
there exists $\bw\in\bE$ such that 
\begin{equation}
\label{fin4}
\left\|\bw-\bw_h\right\|_{\divO}
\le C\left(\gamma_h+\eta_h\right).
\end{equation}
Moreover, it is easy to check that $\bw$ can be chosen normalized in
$\HdivO$-norm.

{}From the symmetry of the bilinear forms and the facts that $\bw$ and
$\bw_h$ are solutions of Problem~\ref{P1} and \ref{P3}, respectively, we
have
\begin{multline*}
\int_\O\div(\bw-\bw_h)^2-\l\int_\O\left(\bw-\bw_h\right)^2
=\l_h^{(i)} b_h(\bw_h,\bw_h)-\l\,b(\bw_h,\bw_h)
\\
=\l_h^{(i)}\left(b_h(\bw_h,\bw_h)-b(\bw_h,\bw_h)\right)
+\big(\l_h^{(i)}-\l\big)\,b(\bw_h,\bw_h),
\end{multline*}
from which we obtain the following identity:
\begin{align}
\nonumber
\big(\l_h^{(i)}-\l\big)\,b(\bw_h,\bw_h)
& =\int_\O\div(\bw-\bw_h)^2-\l\int_\O\left(\bw-\bw_h\right)^2
\\
\label{45}
& \quad-\l_h^{(i)}\left(b_{h}(\bw_h,\bw_h)-b(\bw_h,\bw_h)\right).
\end{align}

The next step is to estimate each term on the right hand side above. The
first and the second ones are easily bounded by using the Cauchy-Schwarz
inequality and \eqref{fin4}:
\begin{equation}
\label{terms_12}
\left|\int_\O\div(\bw-\bw_h)^2
-\l\int_\O\left(\bw-\bw_h\right)^2\right|
\le C\left\|\bw-\bw_h\right\|_{\divO}^2
\le C\left(\gamma_h^2+\eta_h^2\right).
\end{equation}
For the third term, we use \eqref{20}--\eqref{21} to write
\begin{align*}
& \left|b_h(\bw_h,\bw_h)-b(\bw_h,\bw_h)\right|
\\
& \qquad
=\left|\sum_{E\in\CT_h}\left(\int_{E}\big(\bpi_h^E\bw_h\big)^{2}
+S^E\big(\bw_h-\bpi_h^E\bw_h,\bw_h-\bpi_h^E\bw_h\big)\right)
-\sum_{E\in\CT_h}\int_{E}\left(\bw_h\right)^{2}\right|
\\
& \qquad\leq\left|\sum_{E\in\CT_h}
\big(\big\|\bpi_h^E\bw_h\big\|_{0,E}^{2}
-\left\|\bw_h\right\|_{0,E}^{2}\big)\right|
+\sum_{E\in\CT_h}c_{1}\int_{E}\big(\bw_h-\bpi_h^E\bw_h\big)^{2}
\\
& \qquad=\sum_{E\in\CT_h}
\big\|\bw_h-\bpi_h^E\bw_h\big\|_{0,E}^{2}
+c_{1}\sum_{E\in\CT_h}\,\big\|\bw_h-\bpi_h^E\bw_h\big\|_{0,E}^{2}
\\
& \qquad\leq C\left\|\bw_h-\bpi_h\bw_h\right\|^2_{0,\O}
\\
& \qquad\leq C\left(\left\|\bw_h-\bw\right\|^2_{0,\O}
+\left\|\bw-\bpi_h\bw\right\|^2_{0,\O}
+\left\|\bpi_h(\bw-\bw_{h})\right\|^2_{0,\O}\right).
\end{align*}
Then, from the last inequality, the definition of $\eta_{h}$, the fact
that $\bpi_h$ is an $[\LO]^2$-projection and \eqref{fin4}, we obtain
\begin{equation}
\label{4.39}
\left|b(\bw_h,\bw_h)-b_h(\bw_h,\bw_h)\right|
\le C\big(\gamma_h^2+\eta_h^2\big).
\end{equation}
On the other hand, from the stability property \eqref{consistencia2},
\begin{align*}
\left\|\div\bw_h\right\|_{0,\O}^2
=\l_h^{(i)}b_h(\bw_h,\bw_h)
\leq\l_h^{(i)}\alpha^*\left\|\bw_h\right\|_{0,\O}^2, 
\end{align*}
hence 
\begin{equation*}
\label{439}
\big(1+\l_h^{(i)}\alpha^*\big)\left\|\bw_h\right\|_{0,\O}^2
\geq\left\|\bw_h\right\|_{\divO}^2=1.
\end{equation*}
Therefore, since $\l_h^{(i)}\to\l$ as $h$ goes to zero, the theorem
follows from \eqref{45}, \eqref{terms_12}, \eqref{4.39} and the
inequality above.
$\hfill\qed$
\end{proof}

As shown in Theorem~\ref{CHAR_SP}(ii), the eigenfunctions satisfy
additional regularity. The following result shows that this implies an
optimal order of convergence for the numerical method.

\begin{corollary}
If $\bE\subset [\H^{\tilde{s}}(\O)]^2$ with $\tilde{s}>1/2$, then
\begin{equation*}
\hdel(\bE,\bE_h)\leq Ch^{\min\{\tilde{s},k+1\}}.
\end{equation*}
and
\begin{equation*}
\big|\l-\l_h^{(i)}\big|
\le Ch^{2\min\{\tilde{s},k+1\}},
\qquad i=1,\ldots,m.
\end{equation*}
\end{corollary}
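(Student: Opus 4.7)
The plan is a direct combination. Theorem~\ref{cotaespaci} and the theorem immediately preceding this corollary already yield $\hdel(\bE,\bE_h)\le C(\gamma_h+\eta_h)$ and $|\l-\l_h^{(i)}|\le C(\gamma_h^2+\eta_h^2)$, so the entire task reduces to verifying the two estimates collected in~\eqref{cotafin}, namely $\gamma_h\le Ch^{\min\{\tilde{s},k+1\}}$ and $\eta_h\le Ch^{\min\{\tilde{s},k+1\}}$ under the hypothesis $\bE\subset[\H^{\tilde{s}}(\O)]^2$ with $\tilde{s}>1/2$. These are asserted in the main text but not proved in detail; I would therefore spend the proof spelling them out.

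For $\gamma_h$, fix any $\bw\in\bE$ with $\left\|\bw\right\|_{\divO}=1$ and use its virtual interpolant $\bw_I$ (defined by~\eqref{uno}--\eqref{dos}) as a competitor in $\delta(\bw,\bV_h)$. Theorem~\ref{CHAR_SP}(ii) provides $\bw\in[\H^{\tilde{s}}(\O)]^2$ and $\div\bw\in\H^{1+\tilde{s}}(\O)$ with both norms controlled by $\left\|\bw\right\|_{\divO}$. Lemma~\ref{lemmainterV_I} (choosing $t=\min\{\tilde{s},k+1\}$, and absorbing the harmless addendum $h\left\|\div\bw\right\|_{0,\O}$ in the low-regularity regime $1/2<\tilde{s}\le 1$) bounds $\left\|\bw-\bw_I\right\|_{0,\O}$ by $Ch^{\min\{\tilde{s},k+1\}}$, while Lemma~\ref{lemmainter} bounds $\left\|\div(\bw-\bw_I)\right\|_{0,\O}$ by $Ch^{\min\{1+\tilde{s},k+1\}}\le Ch^{\min\{\tilde{s},k+1\}}$. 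Summing gives the desired bound on $\gamma_h$.

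For $\eta_h$, Theorem~\ref{CHAR_SP}(ii) also gives $\bE\subset\bG\cap\bV$, so every $\bw\in\bE$ has the form $\nabla p$ with $p\in\H^{1+\tilde{s}}(\O)$ and $\left\|\nabla p\right\|_{\tilde{s},\O}\le C\left\|\bw\right\|_{\divO}$. Lemma~\ref{A} then immediately yields $\left\|\bw-\bpi_h\bw\right\|_{0,\O}=\left\|\nabla p-\bpi_h(\nabla p)\right\|_{0,\O}\le Ch^{\min\{\tilde{s},k+1\}}\left\|\nabla p\right\|_{\tilde{s},\O}$, which is exactly the required bound on $\eta_h$.

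All the heavy lifting has been done in the two preceding theorems, so there is no genuine obstacle; the only subtlety is to track that the exponent $\tilde{s}$ inherited from $\bw$ itself is binding rather than the higher exponent $1+\tilde{s}$ inherited from $\div\bw$, so that the final rate is $h^{\min\{\tilde{s},k+1\}}$ for the eigenfunction gap and, by squaring, $h^{2\min\{\tilde{s},k+1\}}$ for the eigenvalue error.
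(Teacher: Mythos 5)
Your proposal is correct and follows exactly the paper's route: the corollary is obtained by inserting the bounds \eqref{cotafin} on $\gamma_h$ and $\eta_h$ into Theorem~\ref{cotaespaci} and the preceding eigenvalue theorem, and your verification of \eqref{cotafin} via Lemmas~\ref{lemmainter}, \ref{lemmainterV_I} and \ref{A} together with Theorem~\ref{CHAR_SP}(ii) matches the justification the paper gives in the text where \eqref{cotafin} is introduced. The only cosmetic point is that the inclusion $\bE\subset\bG\cap\bV$ is not literally part of Theorem~\ref{CHAR_SP}(ii) but follows from the spectral decomposition ($\mu\neq1$), as the paper notes separately.
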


\begin{proof}
It follows from the above theorems and the estimates \eqref{cotafin}.
$\hfill\qed$
\end{proof}

\section{Numerical results} 
\label{SEC:NUMER} 

Following the ideas proposed in \cite{BBMR2014}, we have implemented in
a MATLAB code a lowest-order VEM ($k=0$) on arbitrary polygonal meshes.
We report in this section a couple of numerical tests which allowed us
to assess the theoretical results proved above. 

To complete the choice of the VEM, we had to fix the bilinear form
$S^{E}(\cdot,\cdot)$ satisfying \eqref{20} to be used. To do this, we
proceeded as in \cite{BBCMMR2013}. For each element $E\in\T_{h}$ with
edges $e_1,\ldots,e_{N_E}$, let
$\left\{\bphi_1,\ldots,\bphi_{N_E}\right\}$ be the dual basis of
$\bV_h^E$ associated with the degrees of freedom \eqref{freedom};
namely, $\bphi_i\in\bV_h^E$ are such that
$$
\int_{e_j}\bphi_i\cdot\bn\ds
=\delta_{ij},
\qquad i,j=1,\ldots,N_E.
$$
Therefore, $\left\|\bphi_{i}\right\|_{\infty,E}\simeq\dfrac1{h_E}$,
namely, there exists $C>0$ such that
\begin{equation*}
\dfrac{1}{Ch_E}
\leq\left\|\bphi_{i}\right\|_{\infty,E}
\leq\dfrac{C}{h_{E}},
\qquad i=1,\ldots,N_E.
\end{equation*}
Hence, a natural choice for $S^{E}(\cdot,\cdot)$ is given by 
\begin{equation*}
\label{estab}
S^{E}(\bu_h,\bv_h)
:=\sigma_E\sum_{k=1}^{N_E}
\left(\int_{e_k}\bu_h\cdot\bn\right)
\left(\int_{e_k}\bv_h\cdot\bn\right),
\qquad\bu_h,\bv_h\in\bV^{E}_h,
\end{equation*}
where $\sigma_E$ is the so-called \textit{stability constant} which will
be taken of the order of unity (see for instance \cite{BBCMMR2013}).

\subsection{Test 1: Rectangular acoustic cavity}

In this test, the domain is a rectangle $\O:=(0,a)\times(0,b)$, in which
case the exact analytic solution is known. The non vanishing
eigenvalues of Problem~\ref{P1} are given by
\begin{equation*}
\l_{nm}:=\pi^{2}\left(\left(\dfrac{n}{a}\right)^{2}
+\left(\dfrac{m}{b}\right)^{2}\right),
\qquad n,m=0,1,2,\ldots,\quad n+m\neq 0,
\end{equation*}
while the corresponding eigenfunctions are
$$
\bw_{nm}:=\begin{pmatrix}
\dfrac{n}{a}\sin\dfrac{n\pi x}{a}\cos\dfrac{m\pi y}{b}
\\[0.35cm]
\dfrac{m}{b}\cos\dfrac{n\pi x}{a}\sin\dfrac{m\pi y}{b}
\end{pmatrix}.
$$
We have used $a=1$ and $b=1.1$. The stability constant has been taken
$\sigma_E=1$. We have used three different families of meshes (see
Figure~\ref{FIG:VM35}):
\begin{itemize}
\item $\CT_h^1$: triangular meshes;
\\[-.3cm]
\item $\CT_h^2$: rectangular meshes;
\\[-.3cm]
\item $\CT_h^3$: hexagonal meshes.
\end{itemize}
The refinement parameter $N$ used to label each mesh is the number of
elements intersecting each edge. 

\begin{figure}[H]
\begin{center}
\begin{minipage}{4.2cm}
\centering\includegraphics[height=4.1cm, width=4.1cm]{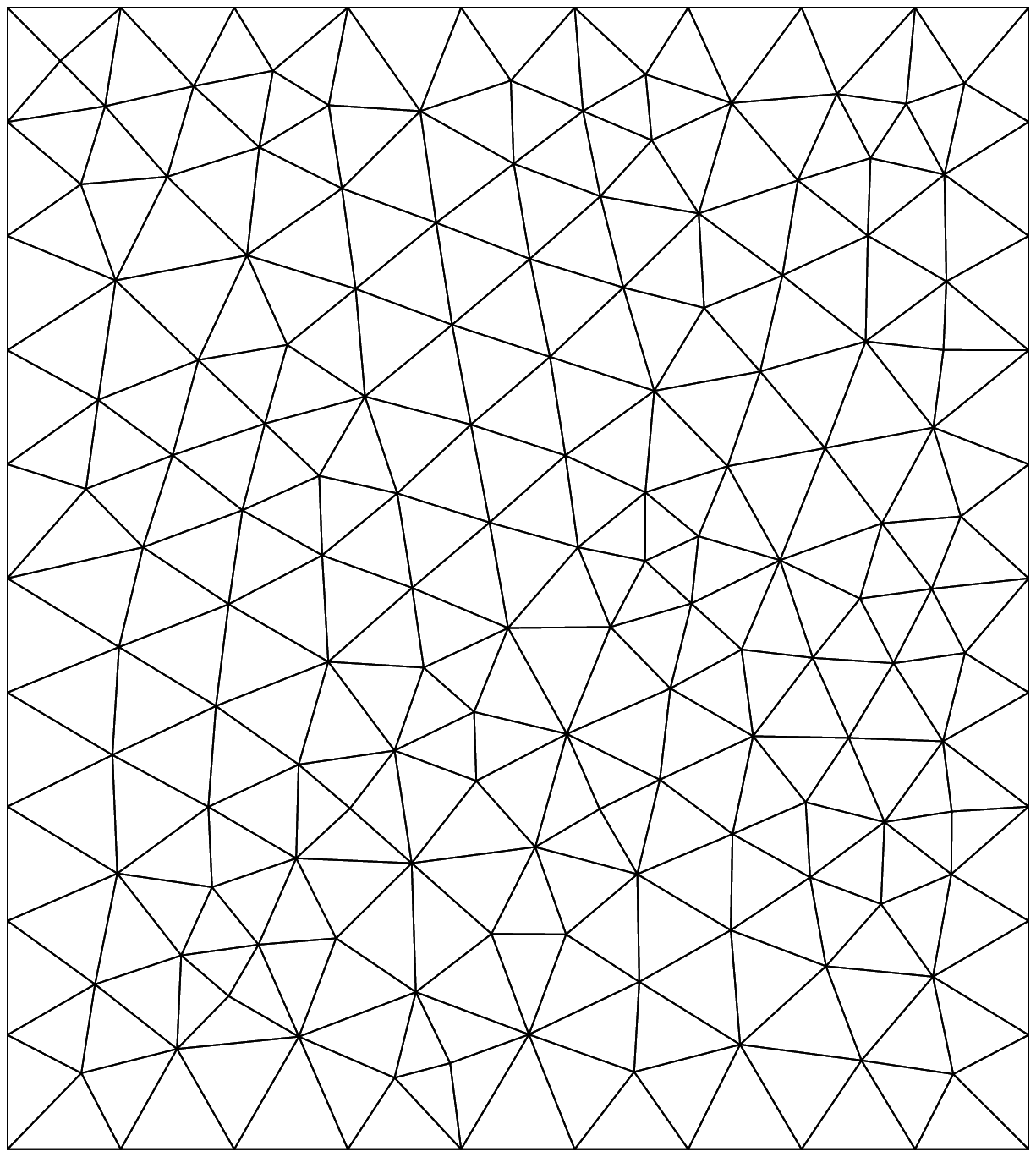}
\end{minipage}
\begin{minipage}{4.2cm}
\centering\includegraphics[height=4.1cm, width=4.1cm]{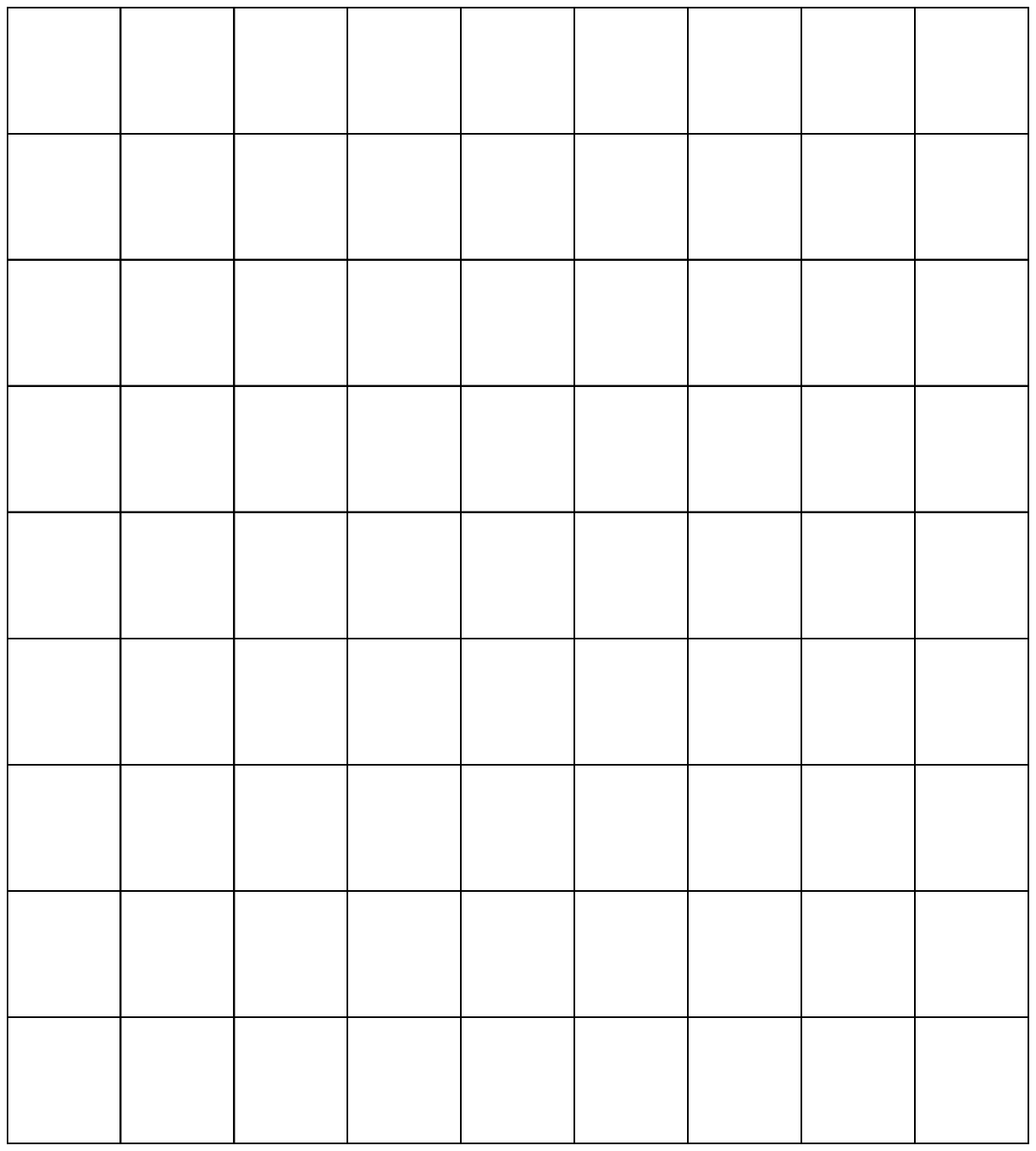}
\end{minipage}
\begin{minipage}{4.2cm}
\centering\includegraphics[height=4.1cm, width=4.1cm]{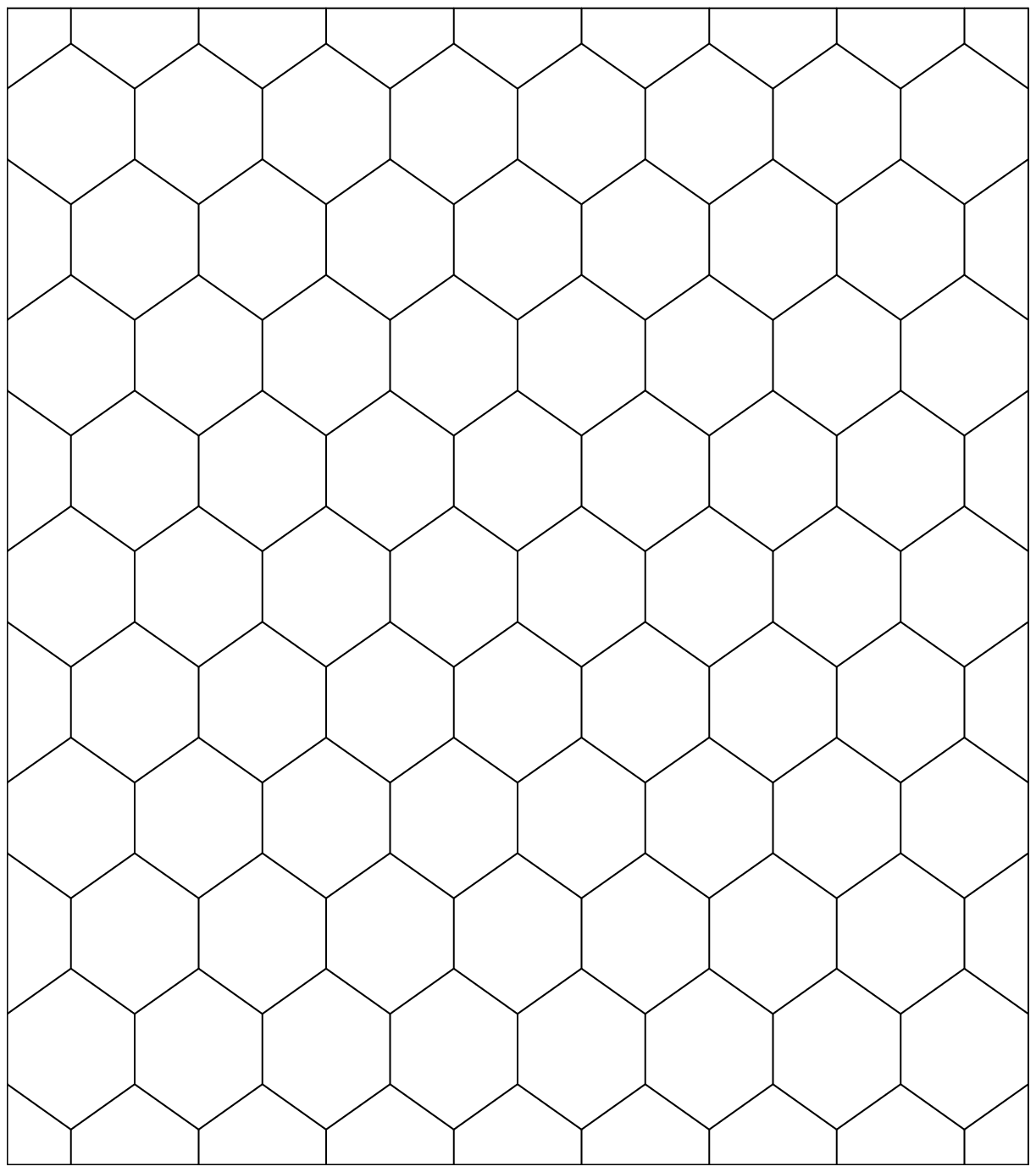}
\end{minipage}
\caption{Sample meshes: $\CT_h^1$ (left), $\CT_h^2$ (middle) and
$\CT_h^{3}$ (right). In all of them ${N=9}$.}
\label{FIG:VM35}
\end{center}
\end{figure}

Let us remark that for triangular and rectangular meshes like $\CT_h^1$
and $\CT_h^2$, respectively, the discrete spaces $\bV_h$ coincide with
those of the standard lowest-order Raviart-Thomas discretization.
However, the resulting discrete problems are not the same. In fact, the
matrices corresponding to the left-hand side of Problem~\ref{P3} also
coincide, but this does not happen with the matrices corresponding to
right-hand side.

We report in Table~\ref{TABLA:1} the scaled lowest eigenvalues
$\widehat{\l}_{hi}:=\l_{hi}/\pi^{2}$ computed with the method analyzed
in this paper. The table also includes estimated orders of convergence.
The exact eigenvalues are also reported in the last column to allow for
comparison.

\begin{table}[ht]
\begin{center}
\caption{Test 1. Computed lowest eigenvalues $\widehat{\l}_{hi}$, $1\le
i\le5$, on different meshes with $\sigma_E=1$.}
\begin{tabular}{|c|c|c|c|c|c|c|c|c|}
\hline
$\CT_h$   & $\widehat{\l}_{hi}$ & $N=19$ & $N=35$ & $N=53$ & $N=71$ & 
Order & $\l_i$ \\
\hline
          & $\widehat{\l}_{h1}$ & 0.8248 & 0.8259 & 0.8262 & 0.8263 & 
	  2.01  & 0.82645 \\
$\CT_h^1$ & $\widehat{\l}_{h2}$ & 0.9976 & 0.9993 & 0.9997 & 0.9998 & 
2.00  & 1.00000 \\
          & $\widehat{\l}_{h3}$ & 1.8182 & 1.8240 & 1.8254 & 1.8259 & 
	  2.01  & 1.82645 \\
          & $\widehat{\l}_{h4}$ & 3.2788 & 3.2978 & 3.3023 & 3.3039 & 
	  2.02  & 3.30579 \\
          & $\widehat{\l}_{h5}$ & 3.9595 & 3.9883 & 3.9949 & 3.9972 & 
	  2.03  & 4.00000 \\
\hline
          & $\widehat{\l}_{h1}$ & 0.8200 & 0.8245 & 0.8256 & 0.8260 & 
	  1.99  & 0.82645 \\
$\CT_h^2$ & $\widehat{\l}_{h2}$ & 0.9896 & 0.9969 & 0.9987 & 0.9992 & 
1.99  & 1.00000 \\
          & $\widehat{\l}_{h3}$ & 1.8096 & 1.8214 & 1.8243 & 1.8252 & 
	  1.99  & 1.82645 \\
          & $\widehat{\l}_{h4}$ & 3.2047 & 3.2754 & 3.2925 & 3.2983 & 
	  1.98  & 3.30579 \\
          & $\widehat{\l}_{h5}$ & 3.8389 & 3.9512 & 3.9786 & 3.9880 & 
	  1.97  & 4.00000 \\
\hline
          & $\widehat{\l}_{h1}$ & 0.8249 & 0.8260 & 0.8262 & 0.8263 & 
	  1.98  & 0.82645 \\
$\CT_h^3$ & $\widehat{\l}_{h2}$ & 0.9948 & 0.9982 & 0.9990 & 0.9993 & 
1.56  & 1.00000 \\
          & $\widehat{\l}_{h3}$ & 1.8132 & 1.8220 & 1.8241 & 1.8249 & 
	  1.63  & 1.82645 \\
          & $\widehat{\l}_{h4}$ & 3.2805 & 3.2979 & 3.3024 & 3.3039 & 
	  1.98  & 3.30579 \\
          & $\widehat{\l}_{h5}$ & 3.9387 & 3.9823 & 3.9912 & 3.9946 & 
	  1.84  & 4.00000 \\
\hline
\end{tabular}
\label{TABLA:1}
\end{center}
\end{table}

It can be seen from Table~\ref{TABLA:1} that the computed eigenvalues
converge to the exact ones with an optimal quadratic order as predicted
by the theory in almost all cases. The exception seems to be the
computation of some of the eigenvalues with the hexagonal meshes. In
this case, although the computed eigenvalues are as good approximations
to the exact ones as those computed with the other families of meshes,
the order of convergence deteriorates mildly. We have observed from our
numerical experiments that this can be avoided by choosing a smaller
stability constant $\sigma_E$. 

This can be clearly seen by comparing the lowest part of
Table~\ref{TABLA:1} with Table~\ref{TABLA:11-}, where we report the
result obtained with a smaller value of $\sigma_E$ and meshes $\CT_h^3$.
A more detailed discussion about the effect of the stability constant
$\sigma_E$ appears in the following test.

\begin{table}[ht]
\begin{center}
\caption{Test 1. Computed lowest eigenvalues $\widehat{\l}_{hi}$, $1\le
i\le5$, on meshes $\CT_h^3$ with $\sigma_E=2^{-4}$.}
\begin{tabular}{|c|c|c|c|c|c|c|c|c|}
\hline
$\CT_h$   & $\widehat{\l}_{hi}$ & $N=19$ & $N=35$ & $N=53$ & $N=71$ & 
Order & $\l_i$  \\
\hline
          & $\widehat{\l}_{h1}$ & 0.8294 & 0.8272 & 0.8268 & 0.8266 & 
	  2.09  & 0.82645 \\
$\CT_h^3$ & $\widehat{\l}_{h2}$ & 1.0032 & 1.0009 & 1.0004 & 1.0002 & 
2.15  & 1.00000 \\
          & $\widehat{\l}_{h3}$ & 1.8389 & 1.8297 & 1.8278 & 1.8272 & 
	  2.12  & 1.82645 \\
          & $\widehat{\l}_{h4}$ & 3.3539 & 3.3179 & 3.3112 & 3.3088 & 
	  2.09  & 3.30579 \\
          & $\widehat{\l}_{h5}$ & 4.0536 & 4.0149 & 4.0063 & 4.0034 & 
	  2.09  & 4.00000 \\
\hline
\end{tabular}
\label{TABLA:11-}
\end{center}
\end{table}

Figure~\ref{FIG:VM36} shows plots of the computed eigenfunctions
$\bw_{h1}$ and $\bw_{h3}$ corresponding to the first and third lowest
eigenvalues, respectively. The figure also includes the corresponding
pressure fluctuation $p_{hi}=-\div\bw_{hi}$, $i=1,3$. In both cases, the
eigenfunctions have been computed on an hexagonal mesh $\CT_h^3$ with
$N=27$ and stability constant $\sigma_E=1$.

\begin{figure}[ht]
\begin{center}
\begin{minipage}{6.3cm}
\centering\includegraphics[height=6.3cm, width=6.3cm]{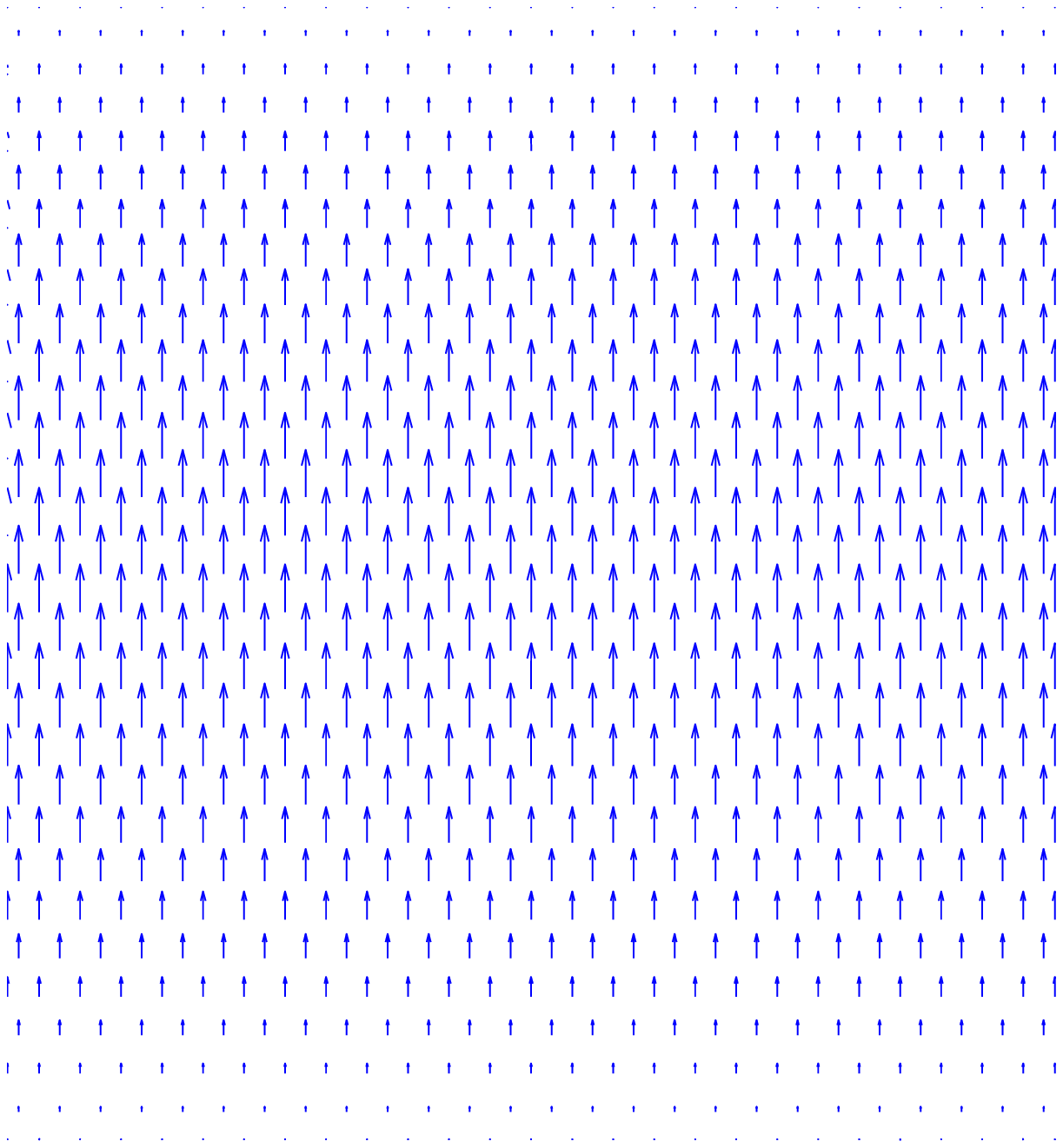}
\end{minipage}
\begin{minipage}{6.3cm}
\centering\includegraphics[height=6.3cm, width=6.3cm]{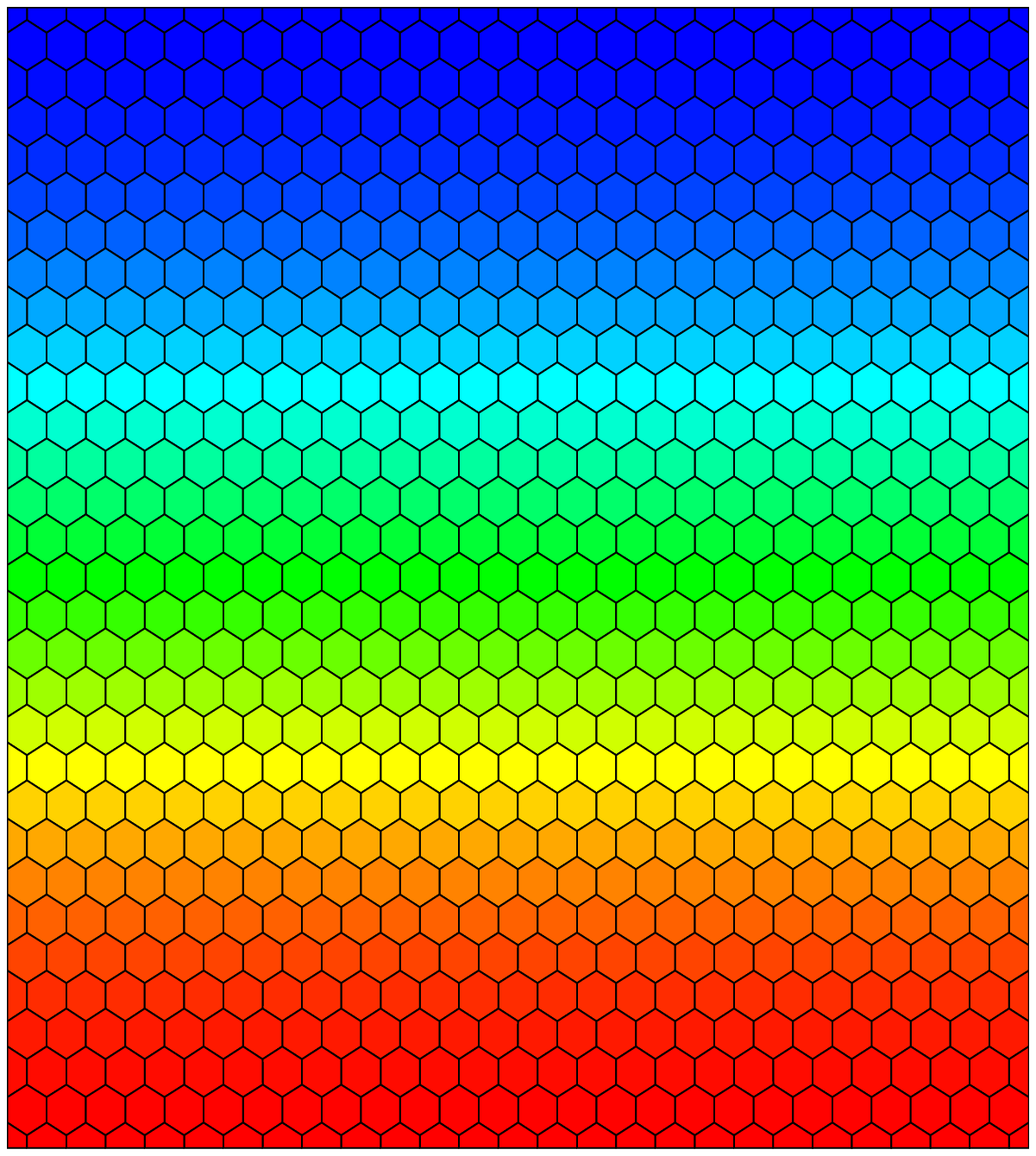}
\end{minipage}
\begin{minipage}{6.3cm}
\centering\includegraphics[height=6.3cm, width=6.3cm]{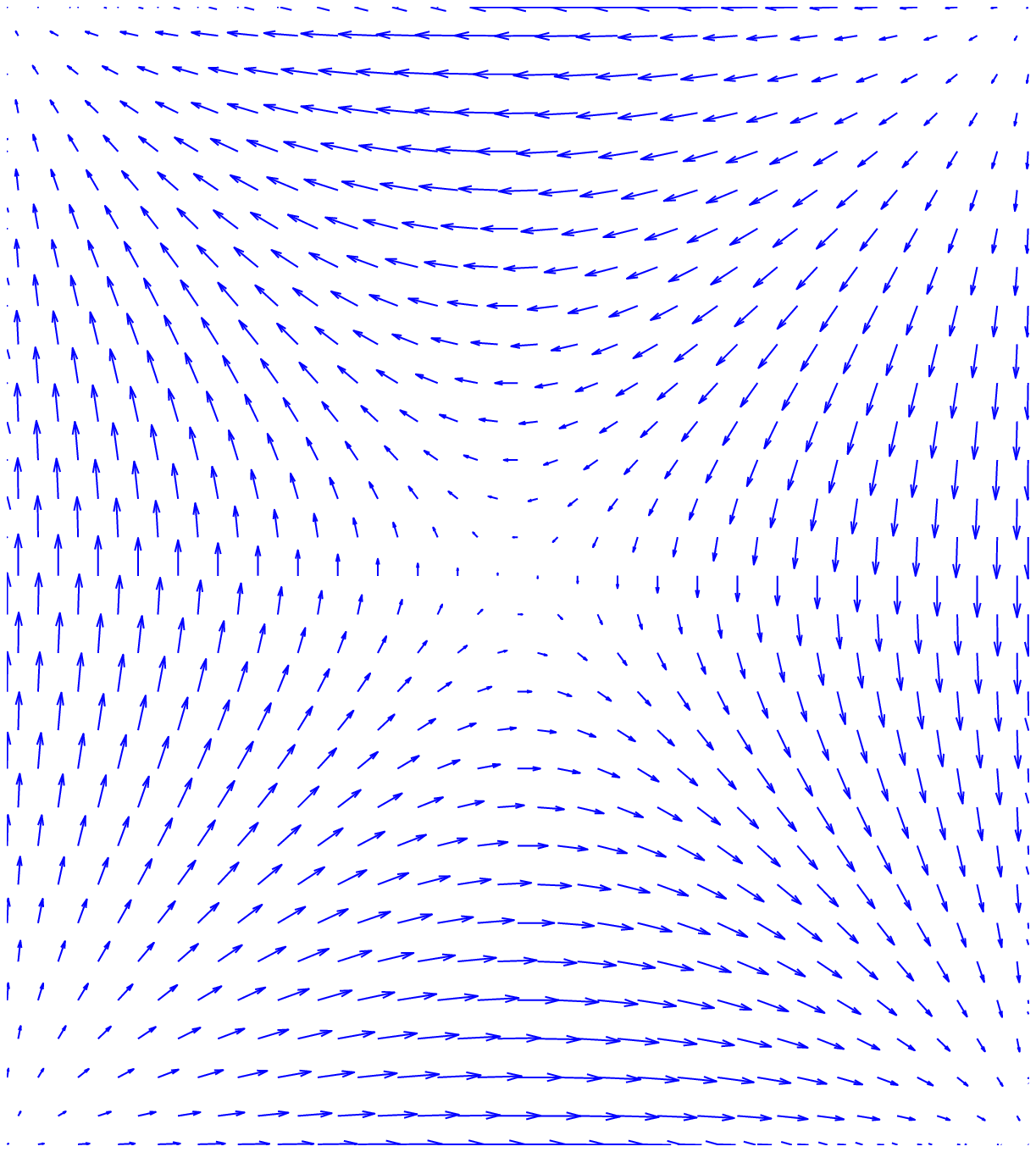}
\end{minipage}
\begin{minipage}{6.3cm}
\centering\includegraphics[height=6.3cm, width=6.3cm]{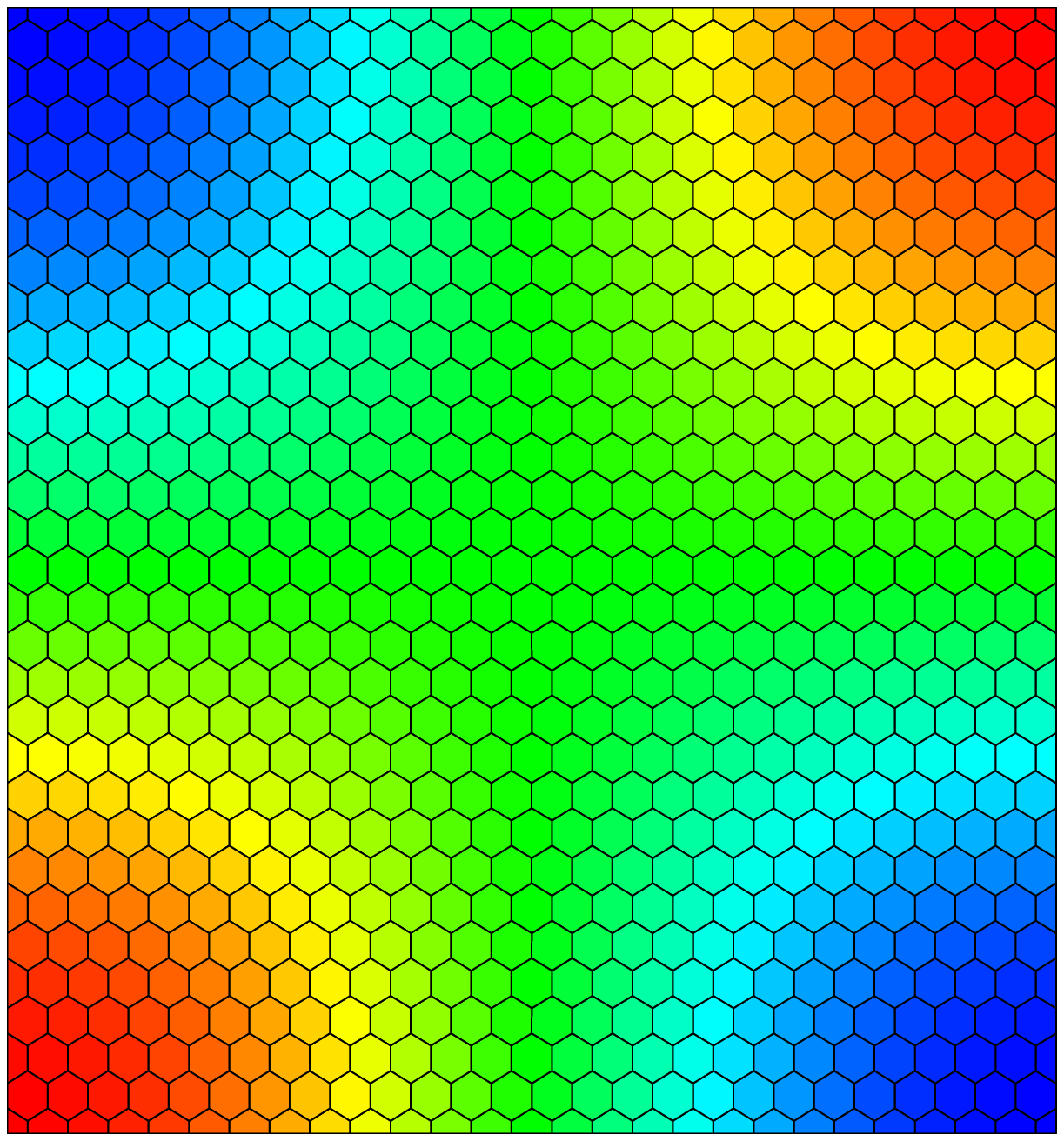}
\end{minipage}
\caption{Eigenfunctions of the acoustic problem corresponding to the
first and third lowest eigenvalues: displacement field $\bw_{h1}$ (upper
left), pressure fluctuation $p_{h1}$,(upper right), displacement field
$\bw_{h3}$ (bottom left), pressure fluctuation $p_{h3}$ (bottom right).}
\label{FIG:VM36}
\end{center}
\end{figure}

\subsection{Test 2: Effect of the stability constant $\sigma_E$}

As was shown in the previous test, in some cases the quality of the
computation can be affected by the choice of the stability constant
$\sigma_E$. A similar behavior was observed in other VEM for different
eigenvalue problems. In particular, it was demonstrated in
\cite{MRR2015} that certain VEM discretizations of the Steklov
eigenvalue problem introduces spurious eigenvalues which can be well
separated from the physical spectrum by choosing appropriately the
stability constant $\sigma_E$.

In the present case, no spurious eigenvalue was detected for any choice
of the stability constant. However, for large values of $\sigma_E$, the
eigenvalues computed with coarse meshes could be very poor. The aim of
this test is to analyze the influence of the stability constant
$\sigma_E$ on the computed spectrum.

We report in Table~\ref{TABLA:2} the lowest eigenvalue computed with
varying values of $\sigma_E$ on the family of meshes $\T_{h}^2$ (see
Figure 2, middle). The table also includes the estimated order of
convergence.

\begin{table}[ht]
\begin{center}
\caption{Test 2. The lowest eigenvalue $\widehat{\l}_{h1}$ for
$\sigma_E=0$ and $\sigma_E=2^{-k}$ with $-6\leq k\leq 6$.}
\begin{tabular}{|c|c|c|c|c|c|c|c|}
\hline
 $N$ & $\sigma_E=0$ & $\sigma_E=2^{-6}$ & $\sigma_E=2^{-5}$ & 
 $\sigma_E=2^{-4}$ & $\sigma_E=2^{-3}$ & $\sigma_E=2^{-2}$ & 
 $\sigma_E=2^{-1}$ \\
\hline
   8 & 0.8482 & 0.8472 & 0.8463 & 0.8444 & 0.8406 & 0.8332 & 0.8187 \\
  16 & 0.8318 & 0.8316 & 0.8313 & 0.8309 & 0.8300 & 0.8281 & 0.8245 \\
  32 & 0.8278 & 0.8277 & 0.8277 & 0.8275 & 0.8273 & 0.8269 & 0.8260 \\
  64 & 0.8268 & 0.8268 & 0.8268 & 0.8267 & 0.8267 & 0.8265 & 0.8263 \\
 128 & 0.8265 & 0.8265 & 0.8265 & 0.8265 & 0.8265 & 0.8265 & 0.8264 \\
 256 & 0.8265 & 0.8265 & 0.8265 & 0.8265 & 0.8265 & 0.8265 & 0.8264 \\
 \hline
\!\!Order\!\! & 2.00 & 2.00 & 2.00 & 2.00 & 2.00 & 2.00 & 2.00 \\
\hline
$\l_1$ & 0.82645 & 0.82645 & 0.82645 & 0.82645 & 0.82645 & 0.82645 & 
0.82645 \\
\hline
 $N$ & $\sigma_E=2^{0}$ & $\sigma_E=2^{1}$ & $\sigma_E=2^{2}$ & 
$\sigma_E=2^{3}$ & $\sigma_E=2^{4}$ & $\sigma_E=2^{5}$ & 
$\sigma_E=2^{6}$ \\
\hline
   8 & 0.7912 & 0.7415 & 0.6586 & 0.5383 & 0.3943 & 0.2569 & 0.1513 \\
  16 & 0.8174 & 0.8034 & 0.7770 & 0.7289 & 0.6487 & 0.5317 & 0.3907 \\
  32 & 0.8242 & 0.8206 & 0.8135 & 0.7997 & 0.7735 & 0.7258 & 0.6463 \\
  64 & 0.8259 & 0.8250 & 0.8233 & 0.8196 & 0.8125 & 0.7988 & 0.7726 \\
 128 & 0.8263 & 0.8261 & 0.8256 & 0.8247 & 0.8229 & 0.8193 & 0.8123 \\
 256 & 0.8264 & 0.8264 & 0.8262 & 0.8260 & 0.8256 & 0.8247 & 0.8229 \\
\hline
\!\!Order\!\! & 1.99 & 1.97 & 1.94 & 1.90 & 1.82 & 1.70 & 1.55 \\
\hline
$\l_1$ & 0.82645 & 0.82645 & 0.82645 & 0.82645 & 0.82645 & 0.82645 
&  0.82645 \\
\hline
\end{tabular}
\label{TABLA:2}
\end{center}
\end{table}

It can be seen from Table~\ref{TABLA:2} that for values of the
parameter $\sigma_E\leq 1$ the computed eigenvalues depend very mildly
on this parameter. Moreover, this dependence becomes weaker, as the mesh
is refined or $\sigma_E$ is taken smaller. In fact, it can be seen from
this table that even the value $\sigma_E=0$ yields very accurate
results, in spite of the fact that for such a value of the parameter the
stability estimate and hence most of the proofs of the theoretical
results do not hold. On the other hand, it can be seen from
Table~\ref{TABLA:2} that the numerical results depend much more
significantly on this parameter $\sigma_E$ when it is chosen larger. In
such a case, the results for coarse meshes are poorer and more refined
meshes are needed for the computed eigenvalues to lie close to the exact
ones.

This analysis suggests that the user of $\H(\div)$ VEM for this kind of
spectral problems has to be aware of the risk of degeneration of the
eigenvalues for certain values of the stability constant $\sigma_E$. The
way of minimizing this risk in this case is to take small values of
$\sigma_E$ (what ``small'' means in a real problem will of course depend
on the value of the physical constants).

\appendix 
\section{Appendix}

We derive in this appendix optimal approximation properties for the
$\H(\div)$ virtual elements with vanishing rotor introduced in
Section~\ref{SEC:Discrete}. The main goal of this appendix will be to
prove the error estimates stated in Lemmas~\ref{lemmainter} and
\ref{lemmainterV_I} for the $\bV_h$-interpolant defined by
\eqref{uno}--\eqref{dos}. Let us remark that these results could be
useful for other applications as well.

Our first result, whose proof is quite straightforward, is a commuting
diagram property and some consequences that follow from it. We recall
that $P_k$ denotes the $\LO$-orthogonal projection onto the subspace 
$\left\{q\in\LO: q|_E\in\bbP_k(E)\quad\forall E\in\CT_h\right\}$.

\vspace{.25cm}
\noindent\textbf{Lemma~\ref{lemmainter}\ }
\textit{Let $\bv\in\bV$ be such that $\bv\in[\HtO]^2$ with $t>1/2$. Let
$\bv_I\in\bV_h$ be its interpolant defined by \eqref{uno}--\eqref{dos}.
Then,
$$
\div\bv_I=P_k(\div\bv)\quad\text{ in }\O.
$$
Consequently, for all $E\in\CT_h$,
$\left\|\div\bv_I\right\|_{0,E}\leq\left\|\div\bv\right\|_{0,E}$ and, if
$\div\bv|_{E}\in\HrE$ with $r\geq 0$, then
\begin{equation*}
\left\|\div\bv-\div\bv_I\right\|_{0,E}\leq Ch_E^{\min\{r,k+1\}}\left|\div\bv\right|_{r,E}. 
\end{equation*}}

\begin{proof}
As a consequence of \eqref{uno}--\eqref{dos}, for every element $E$ and
for every $q\in\bbP_k(E)$
\begin{equation*}
\int_{E}\div(\bv-\bv_{I})\,q
=\int_{E}(\bv-\bv_{I})\cdot\nabla q
+\int_{\partial E}\left(\bv-\bv_{I}\right)\cdot\bn\,q\ds=0.
\end{equation*}
Since $\div\bv_{I}\in\bbP_k(E)$, we have that 
\begin{equation*}
\label{ext1}
\div\bv_{I}=P_k(\div\bv)\quad\text{in }E.
\end{equation*}
Therefore, 
$$
\left\|\div\bv_I\right\|_{0,E}
\leq\left\|\div\bv\right\|_{0,E}.
$$

Additionally, if $\div\bv|_{E}\in\HrE$ with $ r$ a non-negative integer,
as a consequence of \cite[Lemma~4.3.8]{BS-2008}, we have that for every
$E\in\CT_h$
\begin{equation*}
\label{estimdiv}
\left\|\div\bv-\div\bv_I\right\|_{0,E}
\leq Ch_E^{\min\{r,k+1\}}\left|\div\bv\right|_{r,E}. 
\end{equation*}
Thus, the second estimate of the lemma follows by standard Banach space
interpolation.
$\hfill\qed$
\end{proof}

In order to prove Lemma~\ref{lemmainterV_I} about the $\LO$
approximation property of this interpolant, we need several previous
results. We begin with the following local trace estimate on polygons.

\begin{lemma}
\label{trace_s}
Let $\bv\in\bV$ and $E\in\CT_{h}$ such that $\bv|_{E}\in[\HtE]^2$ with
$t\in(1/2,1]$. Then, there exists $C>0$ such that
\begin{equation*}
\label{tracelocal_s}
\left\|\bv\right\|_{0,\partial E}
\leq 
C\left(h_E^{-1/2}\left\|\bv\right\|_{0,E}
+h_E^{t-1/2}\left|\bv\right|_{t,E}\right).
\end{equation*}
\end{lemma}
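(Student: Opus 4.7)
The natural approach is a scaling argument. The plan is to map the element $E$ to a reference polygon $\widehat{E}:=E/h_E$ of unit diameter, apply the continuous trace theorem for fractional Sobolev spaces on $\widehat{E}$, and then scale back keeping careful track of the powers of $h_E$.

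More precisely, for $\bv|_E \in [\HtE]^2$ set $\widehat{\bv}(\widehat{\bx}):=\bv(h_E\widehat{\bx})$ for $\widehat{\bx}\in\widehat{E}$. A direct change of variables in two dimensions gives the scaling identities
\begin{align*}
\left\|\bv\right\|_{0,E}^2 & =h_E^2\left\|\widehat{\bv}\right\|_{0,\widehat{E}}^2,\\
\left\|\bv\right\|_{0,\partial E}^2 & =h_E\left\|\widehat{\bv}\right\|_{0,\partial\widehat{E}}^2,\\
\left|\bv\right|_{t,E}^2 & =h_E^{2-2t}\left|\widehat{\bv}\right|_{t,\widehat{E}}^2,
\end{align*}
where the last identity (for non-integer $t$) follows from the Sobolev--Slobodeckij definition of the seminorm and for $t=1$ from the usual chain rule. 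Since $t>1/2$, the standard trace theorem for fractional Sobolev spaces on Lipschitz domains applied on the unit-diameter domain $\widehat{E}$ yields
$$
\left\|\widehat{\bv}\right\|_{0,\partial\widehat{E}}
\le\widehat{C}\left(\left\|\widehat{\bv}\right\|_{0,\widehat{E}}
+\left|\widehat{\bv}\right|_{t,\widehat{E}}\right).
$$
Substituting the scaling identities and squaring/unsquaring gives exactly
$$
\left\|\bv\right\|_{0,\partial E}
\le C\left(h_E^{-1/2}\left\|\bv\right\|_{0,E}
+h_E^{t-1/2}\left|\bv\right|_{t,E}\right).
$$

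The only delicate point, and the main obstacle, is to ensure that the constant $\widehat{C}$ coming from the trace theorem on $\widehat{E}$ can be chosen independently of the element $E$ in the mesh family. This uniformity is exactly what the mesh regularity hypotheses $\mathbf{A_1}$ and $\mathbf{A_2}$ are designed to provide: the star-shapedness with respect to a ball of radius $C_{\T}h_E$ implies that the scaled polygons $\widehat{E}$ are uniformly Lipschitz (with Lipschitz character controlled by $C_{\T}$), and the edge-to-diameter ratio condition bounds the number of edges and the local geometry. Under these conditions one may decompose $\partial\widehat{E}$ into a uniformly bounded number of flat pieces and invoke the trace theorem on each, or equivalently appeal to a compactness/Deny--Lions argument on the finite parameter family of admissible shapes, in either case obtaining a single constant $\widehat{C}$ valid for all $E\in\CT_h$ and all $\CT_h$.
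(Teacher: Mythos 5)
Your argument is correct: the scaling identities are right, the fractional trace theorem on a Lipschitz domain applies since $t>1/2$, and the powers of $h_E$ come out as claimed. The only substantive difference from the paper lies in how the uniformity of the trace constant is secured. The paper does not apply the trace theorem to the whole scaled polygon at all: it triangulates $E$ by joining each vertex to the centre of the ball from assumption $\mathbf{A_2}$, observes that $\mathbf{A_1}$--$\mathbf{A_2}$ make these triangles shape-regular with each having one edge on $\partial E$, and then applies the trace inequality on a single fixed reference triangle. This reduces the uniformity question to one fixed domain and avoids having to quantify the Lipschitz character of a whole family of polygons --- it is essentially the first of your two suggested fallbacks, carried out before scaling rather than after (note that the Slobodeckij seminorms over the subtriangles sum to at most $\left|\bv\right|_{t,E}^2$, so nothing is lost in the summation). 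Your alternative, a compactness argument over the set of admissible unit-diameter shapes, is also viable and is in fact the device the paper itself uses later in the appendix (in the proof of Lemma~\ref{cotatheta2}) to bound the mass matrix inverse; its cost is that you must argue that the optimal trace constant depends continuously, or at least boundedly, on the vertex coordinates, which you assert but do not prove. Either route closes the argument; the paper's triangulation is the more economical of the two.
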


\begin{proof}
Consider the triangulation $\CT_h^{E}$ of the element $E$ obtained by
joining each vertex of $E$ with the midpoint of the ball with respect to
which $E$ is star-shaped. Since we are assuming that the meshes satisfy
$\mathbf{A_1}$ and $\mathbf{A_2}$, the triangles $T\in\CT_h^{E}$ have a
shape ratio (i.e., the quotient between outer and inner diameters)
bounded above by a constant that only depends on $C_\CT$. Moreover, each
triangle $T\in\CT_h^{E}$ has one edge on $\partial E$. Hence, a scaling
argument and a trace inequality in the reference triangular element
allow us to conclude the proof.
$\hfill\qed$
\end{proof}

In order to prove an $\LO$ error estimate for the interpolant $\bv_{I}$,
we will introduce a basis of $\bV_h^E$ dual to the degrees of freedom
\eqref{freedom}--\eqref{freedom2}.

Let $E\in\T_{h}$ with edges $e_1,\ldots,e_{N_E}$ and
$F:E\longrightarrow\widehat{E}$ be an affine mapping of the form
$$
F\begin{pmatrix} x \\ y \end{pmatrix} 
:=\dfrac{1}{h_E}\begin{pmatrix} x-x_E \\ y-y_E \end{pmatrix} 
=:\begin{pmatrix} \widehat{x} \\ \widehat{y} \end{pmatrix},
$$ 
where $\bx_{E}=(x_E,y_E)^T$ is the center of the ball with respect to
which $E$ is star-shaped according to assumption $\mathbf{A_2}$. Note
that $\widehat{E}:=F(E)$ has diameter 1. Moreover, $F$ maps the above
mentioned ball onto a ball of radius $C_{\CT}$ with $0<C_{\CT}\leq 1 $
and $C_{\CT}$ independent of $h_E$, Moreover, $\widehat{E}$ is
star-shaped with respect to each point of this ball.

We define the following basis of $\bbP_k(E):$
\begin{align*}
p_{0}(x,y) & :=1,
\\
p_{s}(x,y) 
& :=\dfrac{\left(x-x_{E}\right)^{\alpha_{1}}
\left(y-y_{E}\right)^{\alpha_{2}}}
{h_{E}^{\alpha_{1}+\alpha_{2}}}
+C_{s},
\qquad\alpha_{1},\alpha_{2}\in\N,
\quad0<\alpha_{1}+\alpha_{2}\leq k,
\end{align*}
with the constant $C_{s}\in\R$ such that $\int_{E}p_{s}=0$. We have
associated above each $s=1,\ldots,\widetilde{N}:=\dim(\bbP_k(E))-1$ with
one particular couple $(\alpha_{1},\alpha_{2})$, by fixing a particular
ordering of these couples. Therefore, the set
$\left\{p_0,p_1,\ldots,p_{\widetilde{N}}\right\}$ is a basis for
$\bbP_k(E)$ that satisfies $\int_Ep_s=0$ for $
s=1,\ldots,\widetilde{N}$. Let now $\widehat{p}_{s}:=p_{s}\circ F^{-1}$
be defined in $\widehat{E}$. Then, for the particular
$(\alpha_{1},\alpha_{2})$ associated with $s$, we have that
$\widehat{p}_{s}(\widehat{x},\widehat{y})
=\widehat{x}^{\alpha_{1}}\widehat{y}^{\alpha_{2}}+C_{s}$. Moreover,
since $\left|E\right|=h_E^2\big|\widehat{E}\big|$, we have
$$
C_s=-\dfrac{1}{\left|E\right|}\int_{E}
\dfrac{\left(x-x_{E}\right)^{\alpha_{1}}
\left(y-y_{E}\right)^{\alpha_{2}}}
{h_{E}^{\alpha_{1}+\alpha_{2}}}\,dx\,dy
=-\dfrac{1}{\big|\widehat{E}\big|}\int_{\widehat{E}}
\widehat{x}^{\alpha_1}\widehat{y}^{\alpha_2}
\,d\widehat{x}\,d\widehat{y}.
$$
As a consequence, note that $\left|C_s\right|\leq 1$ and, hence,
$\left\|p_s\right\|_{\infty,E}
=\left\|\widehat{p}_s\right\|_{\infty,\widehat{E}}\leq2$, 
$s=0,\ldots,\widetilde{N}$.

For each edge $e_l$ of $E$ ($l=1,\ldots,N_E$), let $T_{l}$ be the affine
function mapping $\widehat{e}:=[-1,1]$ onto $e_l$. We define
$q_l^i:=\widehat{q}\,^{i}\circ T_{l}^{-1}$ ($i=1,\dots,k$) with
$\widehat{q}\,^{i}$ being the Legendre polynomials on $[-1,1]$
normalized by $\widehat{q}\,^{i}(1)=1$. Then,
$\left\{q_l^0,\ldots,q_{l}^k\right\}$ is a basis of $\bbP_k(e_l)$ which
satisfies $q_l^0=1$, $\int_{e_l} q_l^iq_l^j\ds=\delta_{ij}$, $
i,j=1,\ldots,k$, and $\left\|q_l^i\right\|_{\infty,e_l}=1$. Note that,
in particular, $\int_{e_l} q_l^i\ds=0$, $ i=1,\ldots,k$.

Therefore,
\begin{equation*}
\left\{q_{l}^{i}\right\}_{i=0,\ldots,k,\ l=1,\ldots,N_E}
\qquad\textrm{and}\qquad
\left\{p_{s}\right\}_{s=1,\ldots,\widetilde{N}}
\end{equation*}
are bases for the spaces of test functions appearing in the degrees of
freedom \eqref{uno} and \eqref{dos}, respectively. Next, we introduce a
set of dual basis functions for $\bV_h^E$:
\begin{equation}
\label{spanv}
\left\{\bphi_l^i\right\}_{i=0,\ldots,k,\ l=1,\ldots,N_E}
\cup\left\{\widetilde{\bphi}^s\right\}_{s=1,\ldots,\widetilde{N}}.
\end{equation}
The first ones, $\bphi_{l}^i$, are the ``boundary basis functions''
determined by
\begin{align}
\label{primera0}
& \bphi_l^i\in\bV_{h}^{E},
\\
\label{primera}
& \int_{e_{m}}\left(\bphi_l^i\cdot\bn\right)q_{m}^j\ds
=\delta_{lm}\delta_{ij},
\qquad m=1,\ldots,N_{E},\quad j=0,\ldots,k,
\\
\label{segunda}
& \int_{E}\left(\div\bphi_l^i\right)p_{r}=0,
\qquad r=1,\ldots,\widetilde{N}.
\end{align}
Note that these boundary basis functions use two indexes, $i$ and $l$,
one for the moment and the other for the edge. On the other hand, note
also that as a consequence of \eqref{primera0}--\eqref{primera}
$\bphi_l^i\cdot\bn=0$ on $\partial E\setminus e_{l}$ The second kind of
functions in \eqref{spanv}, $\widetilde{\bphi}^s$, are the ``internal
basis functions'' determined by
\begin{align}
\label{tercera0}
& \widetilde{\bphi}^s\in\bV_{h}^{E},
\\
\label{tercera}
& \widetilde{\bphi}^s|_{\partial E}\cdot\bn=0,
\\
\label{cuarta}
& \int_{E}\left(\div\widetilde{\bphi}^s\right)p_{r}
=\delta_{sr},
\qquad r=1,\ldots,\widetilde{N}.
\end{align}

\begin{remark}
Since $\div\bphi_{l}^i\in\bbP_k(E)
=\mathrm{span}\left\{1,p_1,\ldots,p_s\right\}$ and $\int_Ep_{s}=0$ for
$s=1,\ldots,\widetilde{N}$, equation \eqref{segunda} implies that
$\div\bphi_{l}^i$ has to be constant. Therefore,
\begin{equation*}
\div\bphi_{l}^i
=\dfrac{1}{\left|E\right|}\int_{E}\div\bphi_{l}^i
=\dfrac{1}{\left|E\right|}\int_{\partial E}\bphi_{l}^i\cdot\bn\ds.
\end{equation*} 
Moreover, thanks to \eqref{primera}, we have that
\begin{equation*}
\int_{\partial E}\bphi_{l}^i\cdot\bn\ds
=\sum_{m=1}^{N_{E}}\int_{e_{m}}\left(\bphi_l^i\cdot\bn\right)q_{m}^0\ds
=\sum_{m=1}^{N_{E}}\delta_{l m}\delta_{i0}=\delta_{i0}.
\end{equation*}
Then, 
\begin{equation*}
\div\bphi_{l}^i
=\dfrac{\delta_{i0}}{\left|E\right|}.
\end{equation*}
\end{remark}
Next goal is to prove that all the functions in \eqref{spanv} are
bounded uniformly in $h$. We begin with the boundary basis functions.

\begin{lemma}
\label{cotatheta}
There exists $C>0$ such that $\left\|\bphi_{l}^i\right\|_{0,E}\leq C$
for $l=1,\ldots,N_{E}$ and $i=0,\ldots,k$.
\end{lemma}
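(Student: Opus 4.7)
The plan is to exploit that $\bphi_l^i$ has vanishing rotor in order to represent it as a gradient, and then estimate the $\LO$-norm of this gradient via the variational formulation of a Neumann problem after rescaling to a reference element of unit size.

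First, since $\rot\bphi_l^i=0$ in the simply connected polygon $E$ (cf.\ Remark~\ref{obs1}), there exists $\gamma\in\H^1(E)$, unique under the normalization $\int_{E}\gamma=0$, such that $\bphi_l^i=\nabla\gamma$ in $E$. The remark preceding the lemma together with \eqref{primera0}--\eqref{segunda} characterize $\gamma$ as the solution of the Neumann problem
\begin{equation*}
\Delta\gamma=\frac{\delta_{i0}}{|E|}\ \text{ in }E,\qquad\frac{\partial\gamma}{\partial\bn}=\bphi_l^i\cdot\bn\ \text{ on }\partial E,
\end{equation*}
whose Neumann trace vanishes on $\partial E\setminus e_l$. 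On $e_l$ itself, a short expansion of $\bphi_l^i\cdot\bn\in\bbP_k(e_l)$ in the basis $\{q_l^j\}_{j=0}^{k}$ using \eqref{primera} together with $q_l^0\equiv 1$, the $\L^2(e_l)$-orthonormality of $\{q_l^j\}_{j\geq 1}$, and $\int_{e_l}q_l^j\ds=0$ for $j\geq 1$ gives
\begin{equation*}
\bphi_l^0\cdot\bn=\frac{1}{|e_l|}\ \text{ on }e_l,\qquad \bphi_l^i\cdot\bn=q_l^i\ \text{ on }e_l\ \text{ for }i\geq 1.
\end{equation*}

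Second, I transfer the problem to the reference element $\widehat{E}:=F(E)$, which has unit diameter and is star-shaped with respect to a ball of radius $C_{\CT}$, by setting $\widehat{\gamma}(\widehat{\bx}):=\gamma(F^{-1}(\widehat{\bx}))$. A direct change of variables yields $\|\nabla_{\widehat{\bx}}\widehat{\gamma}\|_{0,\widehat{E}}=\|\nabla\gamma\|_{0,E}=\|\bphi_l^i\|_{0,E}$, so it suffices to bound the left-hand side by a constant independent of $h_E$. The function $\widehat{\gamma}$ still has zero mean on $\widehat{E}$ and satisfies
\begin{equation*}
\Delta_{\widehat{\bx}}\widehat{\gamma}=\frac{\delta_{i0}}{|\widehat{E}|}\ \text{ in }\widehat{E},\qquad \frac{\partial\widehat{\gamma}}{\partial\widehat{\bn}}=h_E\,\bphi_l^i\cdot\bn\ \text{ on }\partial\widehat{E}.
\end{equation*}
Using the mesh assumption $\mathbf{A_1}$, namely $|e_l|\geq C_{\CT}h_E$, together with the normalizations of the $q_l^j$, a routine change of variables shows that both $\|\Delta_{\widehat{\bx}}\widehat{\gamma}\|_{0,\widehat{E}}$ and $\|\partial_{\widehat{\bn}}\widehat{\gamma}\|_{0,\partial\widehat{E}}$ are bounded by a constant depending only on $C_{\CT}$ and $k$.

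The conclusion then follows by testing the Neumann formulation with $\widehat{\gamma}$ itself to get
\begin{equation*}
\|\nabla\widehat{\gamma}\|_{0,\widehat{E}}^{2}=\int_{\partial\widehat{E}}\widehat{\gamma}\,\partial_{\widehat{\bn}}\widehat{\gamma}\,d\widehat{s}-\int_{\widehat{E}}\widehat{\gamma}\,\Delta_{\widehat{\bx}}\widehat{\gamma},
\end{equation*}
and then applying Cauchy--Schwarz together with the Poincar\'e inequality $\|\widehat{\gamma}\|_{0,\widehat{E}}\leq C\|\nabla\widehat{\gamma}\|_{0,\widehat{E}}$ (valid since $\widehat{\gamma}$ has mean zero) and the trace inequality $\|\widehat{\gamma}\|_{0,\partial\widehat{E}}\leq C\|\nabla\widehat{\gamma}\|_{0,\widehat{E}}$, both with constants depending only on $C_{\CT}$. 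This gives $\|\nabla\widehat{\gamma}\|_{0,\widehat{E}}^{2}\leq C\|\nabla\widehat{\gamma}\|_{0,\widehat{E}}$, and hence $\|\bphi_l^i\|_{0,E}\leq C$. The main obstacle is precisely the uniformity of the Poincar\'e and trace constants on $\widehat{E}$, since the reference element still depends on $E$ (in particular, the number of edges $N_E$ may vary); this is settled by the standard estimates on polygons star-shaped with respect to a ball of uniformly bounded relative radius, which under $\mathbf{A_1}$--$\mathbf{A_2}$ depend only on $C_{\CT}$.
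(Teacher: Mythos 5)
Your proof is correct and follows essentially the same route as the paper: write $\bphi_l^i=\nabla\gamma$ using the vanishing rotor, identify the Neumann problem solved by $\gamma$, test with $\gamma$ itself, and control the boundary and Poincar\'e/trace constants uniformly via the mesh assumptions $\mathbf{A_1}$--$\mathbf{A_2}$. The only (cosmetic) difference is that you rescale to the unit-diameter element $\widehat{E}$ before applying the energy estimate, whereas the paper works directly on $E$ and tracks the powers of $h_E$ through Lemma~\ref{trace_s} and a scaled Poincar\'e inequality; the two presentations are equivalent.
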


\begin{proof}
Since $\bphi_{l}^i\in\bV_h^E$, we know that $\rot\bphi_{l}^i=0$.
Therefore, there exists $\gamma\in\HuE$ such that
$\bphi_{l}^i=\nabla\gamma$. Hence, from the remark above and
\eqref{primera}, we have that $\gamma$ is a solution of the following
problem:
\begin{align*}
\left\{\begin{array}{ll}
\Delta\gamma
=\dfrac{\delta_{i0}}{\left|E\right|}
& \quad\text{in }E,
\\[0.15cm]
\dfrac{\partial\gamma}{\partial\bn}
=\bphi_{l}^i\cdot\bn
& \quad\text{on }\partial E,
\\[0.25cm]
\displaystyle\int_E\gamma=0.
\end{array}\right.
\end{align*}
It is easy to check that these Neumann problems are compatible.
Therefore, 
\begin{equation*}
\int_{E}\nabla\gamma\cdot\nabla 
\zeta=\int_{\partial E}\left(\bphi_{l}^i\cdot\bn\right)\zeta\ds
-\int_E\dfrac{\delta_{i0}}{\left|E\right|}\zeta
=\int_{e_l}\left(\bphi_{l}^i\cdot\bn\right)\zeta\ds 
\qquad\forall\zeta\in\HuE:\ \int_{E}\zeta=0.
\end{equation*}
Now, taking $\zeta=\gamma$, we obtain
\begin{align*}
\left\|\bphi_{l}^i\right\|_{0,E}^2
& =\left\|\nabla\gamma\right\|_{0,E}^2
\leq\left\|\bphi_{l}^i\cdot\bn\right\|_{0,e_l}
\left\|\gamma\right\|_{0,e_l}
\\
& \leq C\left\|\bphi_{l}^i\cdot\bn\right\|_{0,e_l}
\left(h_E^{-1/2}\left\|\gamma\right\|_{0,E}
+h_E^{1/2}\left\|\nabla\gamma\right\|_{0,E}\right) 
\\
& \leq C h_E^{1/2}\left\|\bphi_{l}^i\cdot\bn\right\|_{0,e_l}
\left\|\nabla\gamma\right\|_{0,E},
\end{align*}
where we have used Lemma~\ref{trace_s} with $t=1$, the generalized
Poincar\'e inequality and a scaling argument. Now, because of
\eqref{primera} with $m=l$ and the orthogonality property of Legendre
polynomials, $\left.\bphi_l^i\cdot\bn\right|_{e_l}
=\left(\int_{e_l}\left(q^{i}_l\right)^2\ds\right)^{-1}q_l^i$.
Therefore,
\begin{equation*}
\left\|\bphi_{l}^i\cdot\bn\right\|_{0,e_l}^2
=\left(\int_{e_l}\left(q^i_l\right)^2\ds\right)^{-1}
=\dfrac{1}{h_E}\left(\int_{\widehat{e}}
\left(\widehat{q}\,^i\right)^2\,d\widehat{s}\right)^{-1}.
\end{equation*}
Thus, from the last two estimates we derive that
$\left\|\bphi_{l}^i\right\|_{0,E}\leq C$ and we end the proof.
$\hfill\qed$
\end{proof}

Next, we show a similar result for the internal basis functions.

\begin{lemma}
\label{cotatheta2}
There exists $C>0$ such that
$\left\|\widetilde{\bphi}^s\right\|_{0,E}\leq C$ for
$s=1,\ldots,\widetilde{N}$.
\end{lemma}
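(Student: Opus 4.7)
The plan is to follow the same template as the proof of Lemma~\ref{cotatheta}, namely, represent $\widetilde{\bphi}^s$ as the gradient of the solution of a Neumann problem and use scaling to the reference configuration in order to bound the right-hand side of that problem.

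First, since $\widetilde{\bphi}^s\in\bV_h^E$ has vanishing rotor and $E$ is simply connected, there exists $\gamma\in\HuE$ with $\int_E\gamma=0$ such that $\widetilde{\bphi}^s=\nabla\gamma$. Using \eqref{tercera0}--\eqref{cuarta}, $\gamma$ solves the Neumann problem
\begin{equation*}
\Delta\gamma=\div\widetilde{\bphi}^s\text{ in }E,\qquad
\frac{\partial\gamma}{\partial\bn}=0\text{ on }\partial E,
\qquad\int_E\gamma=0,
\end{equation*}
which is compatible because $\int_E\div\widetilde{\bphi}^s=\int_{\partial E}\widetilde{\bphi}^s\cdot\bn\ds=0$ by \eqref{tercera}. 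Testing the equation with $\gamma$ itself, integrating by parts and using the boundary condition together with the generalized Poincar\'e inequality on $E$ (whose constant is bounded independently of $h_E$ by the mesh assumptions $\mathbf{A_1}$, $\mathbf{A_2}$ and a standard scaling argument) yields
\begin{equation*}
\big\|\widetilde{\bphi}^s\big\|_{0,E}^{2}
=\|\nabla\gamma\|_{0,E}^2
=-\int_E\gamma\,\div\widetilde{\bphi}^s
\le\|\gamma\|_{0,E}\big\|\div\widetilde{\bphi}^s\big\|_{0,E}
\le C h_E\big\|\widetilde{\bphi}^s\big\|_{0,E}\big\|\div\widetilde{\bphi}^s\big\|_{0,E}.
\end{equation*}
Thus, after canceling one factor, it remains to prove that $\big\|\div\widetilde{\bphi}^s\big\|_{0,E}\le C\,h_E^{-1}$.

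To this end, note that $\div\widetilde{\bphi}^s\in\bbP_k(E)$ has zero mean on $E$, so it can be expanded in the basis $\{p_1,\ldots,p_{\widetilde{N}}\}$ as $\div\widetilde{\bphi}^s=\sum_{r=1}^{\widetilde{N}}a_r p_r$. Condition \eqref{cuarta} translates into the linear system $M a=e_s$, where $M_{jr}:=\int_E p_j p_r$ is the Gram matrix and $e_s$ is the $s$-th canonical vector. Changing variables through the affine map $F:E\to\widehat{E}$ introduced before the statement, we get $M=h_E^2\widehat{M}$, with $\widehat{M}_{jr}:=\int_{\widehat{E}}\widehat{p}_j\widehat{p}_r$. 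Then
\begin{equation*}
\big\|\div\widetilde{\bphi}^s\big\|_{0,E}^2
=a^{T}Ma=h_E^{-2}e_s^{T}\widehat{M}^{-1}e_s
\le h_E^{-2}\big\|\widehat{M}^{-1}\big\|,
\end{equation*}
so it suffices to show that $\|\widehat{M}^{-1}\|$ is uniformly bounded. Since $\{\widehat{p}_r\}$ is a basis of the zero-mean polynomials on the unit-diameter domain $\widehat{E}$, $\widehat{M}$ is positive definite. The uniform bound on $\widehat{M}^{-1}$ independent of $E$ is the main nontrivial point and follows from the mesh assumptions: by $\mathbf{A_1}$ and $\mathbf{A_2}$, the rescaled domains $\widehat{E}$ are star-shaped with respect to a ball of fixed radius $C_{\CT}$ and have unit diameter; by a compactness/continuity argument (the smallest eigenvalue of $\widehat{M}$ depends continuously on the shape of $\widehat{E}$ and is strictly positive on this compact family of admissible shapes), $\lambda_{\min}(\widehat{M})$ is bounded below by a positive constant depending only on $C_{\CT}$. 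This is the step I expect to be the main obstacle, and it can be handled either by the usual Bramble--Hilbert/averaged Taylor machinery on star-shaped domains or by a direct continuity argument using the uniform $L^\infty$ bounds $\|\widehat{p}_r\|_{\infty,\widehat{E}}\le 2$ established in the paragraph before the lemma.

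Combining the two displayed inequalities gives $\big\|\widetilde{\bphi}^s\big\|_{0,E}\le C h_E\cdot C h_E^{-1}=C$, with $C$ depending only on $C_{\CT}$ and $k$, which concludes the proof. $\hfill\qed$
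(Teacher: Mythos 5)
Your proposal is correct and follows essentially the same route as the paper: represent $\widetilde{\bphi}^s=\nabla\gamma$, use the Neumann problem and the scaled Poincar\'e inequality to reduce to bounding $h_E\big\|\div\widetilde{\bphi}^s\big\|_{0,E}$, and then control the Gram matrix of the scaled basis $\{\widehat{p}_r\}$ uniformly by a compactness argument over the admissible reference shapes. The only (cosmetic) difference is that you bound $\big\|\div\widetilde{\bphi}^s\big\|_{0,E}^2$ directly via the quadratic-form identity $a^{T}Ma=h_E^{-2}e_s^{T}\widehat{M}^{-1}e_s$, whereas the paper passes through the $\L^\infty$ norm on $\widehat{E}$ and bounds the coefficient vector in the max-norm; both hinge on the same uniform invertibility of $\widehat{M}$.
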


\begin{proof}
Since $\widetilde{\bphi}^s\in\bV_h^E$, there exists $\gamma\in\HuE$ such
that $\widetilde{\bphi}^s=\nabla\gamma$. Hence, by virtue of
\eqref{tercera}, we have that $\gamma$ is a solution of the following
well posed Neumann problem:
\begin{align*}
\left\{\begin{array}{ll}
\Delta\gamma=-\div\widetilde{\bphi}^s
& \quad\text{in }E,
\\[0.15cm]
\dfrac{\partial\gamma}{\partial\bn}=0 
& \quad\text{on }\partial E,
\\[0.25cm]
\displaystyle\int_E\gamma=0.
\end{array}\right.
\end{align*}
Therefore, 
\begin{equation*}
\int_{E}\nabla\gamma\cdot\nabla\zeta
=-\int_{E}\psi^s\zeta
\qquad\forall\zeta\in\HuE:\ \int_{E}\zeta=0,
\end{equation*}
where $\psi^s:=\div\widetilde{\bphi}^s$. Now, taking $\zeta=\gamma$ and
using the generalized Poincar\'e inequality and a scaling argument, we
have that
$$
\left\|\widetilde{\bphi}^s\right\|_{0,E}^2
=\left\|\nabla\gamma\right\|_{0,E}^2
\leq C\left\|\psi^s\right\|_{0,E}
\left\|\gamma\right\|_{0,E}
\leq Ch_E\left\|\psi^s\right\|_{0,E}
\left\|\nabla\gamma\right\|_{0,E}.
$$
Thus,
\begin{equation}
\label{rodol}
\left\|\widetilde{\bphi}^s\right\|_{0,E}
\leq C h_E\left\|\psi^s\right\|_{0,E}.
\end{equation}

On the other hand, since $\psi^s\in\bbP_k(E)$, it is easy to
check that
\begin{equation}
\label{difhue2}
h_{E}\left\|\psi^{s}\right\|_{0,E}
\leq Ch_{E}^{2}\left\|\psi^{s}\right\|_{\infty,E}
=Ch_{E}^{2}\,\big\|\widehat{\psi}^{s}\big\|_{\infty,\widehat{E}},
\end{equation}
where $\widehat{\psi}^{s}
:=(\psi^{s}\circ F^{-1})\in\bbP_k(\widehat{E})$.

For $\widehat{\psi}^{s}\in\bbP_k(\widehat{E})$, we write
$\widehat{\psi}^{s}=\sum_{i=1}^{\widetilde{N}}\beta_i^s\widehat{p}_i$
and, since $\left\|\widehat{p}_{i}\right\|_{\infty,\widehat{E}}\leq 2$,
we have that
\begin{equation}
\label{sino}
\big\|\widehat{\psi}^{s}\big\|_{\infty,\widehat{E}}
\leq\max_{1\leq i\leq\widetilde{N}}
\left|\beta_{i}^s\right|
\sum_{i=1}^{\widetilde{N}}
\left\|\widehat{p}_{i}\right\|_{\infty,\widehat{E}}
\leq C\max_{1\leq i\leq\widetilde{N}}
\left|\beta_{i}^s\right|.
\end{equation}
Now, from \eqref{cuarta}, a change of variables from $E$ to
$\widehat{E}$ yields 
$$
\int_{\widehat{E}}\widehat{\psi}^{s}\widehat{p}_{r}
=h_{E}^{-2}\delta_{sr},
\qquad r=1,\ldots,\widetilde{N},
$$
which can be written as
\begin{equation}
\label{aster}
\sum_{i=1}^{\widetilde{N}}
\beta_i^s\int_{\widehat{E}}\widehat{p}_i\widehat{p}_{r}
=h_{E}^{-2}\delta_{sr},
\qquad r=1,\ldots,\widetilde{N}.
\end{equation}
Let 
\begin{equation*}
\label{rodo2}
\bM=\begin{pmatrix}m_{ir}\end{pmatrix}
\in\R^{\widetilde{N}\times\widetilde{N}}
\qquad\text{with }
m_{ir}:=\int_{\widehat{E}}\widehat{p}_{i}\widehat{p}_{r},
\quad i,r=1,\ldots,\widetilde{N}.
\end{equation*}
Therefore, from \eqref{aster}, if $\bM$ is invertible, then
$\boldsymbol{\beta}^s=\begin{pmatrix}\beta_1^s & \cdots &
\beta_{\widetilde{N}}^s\end{pmatrix}^T$ is equal to $h_{E}^{-2}$ times
the $s$-th column of $\bM^{-1}$. 

Next, we will show that $\bM$ is invertible and that its inverse is
bounded uniformly in $h$. With this aim, note that the polygon
$\widehat{E}$ is uniquely defined by the vector
$((\widehat{x}_1,\widehat{y}_1),\ldots,
(\widehat{x}_{N_E},\widehat{y}_{N_E}))\in\R^{2N_{E}}$ that collects the
coordinates of its (ordered) vertexes. Let $U\subset\R^{2N_{E}}$, be the
set of all possible values of these coordinates such that the mesh
regularity assumptions $\mathbf{A_1}$ and $\mathbf{A_2}$ are satisfied.
Since the diameter of $\widehat{E}$ is equal to 1, $U$ is a bounded set.
On the other hand, the constraints that arise from hypotheses
$\mathbf{A_1}$ and $\mathbf{A_2}$ yield that $U$ is a closed set.
Therefore $U$ is compact.

The function from $U$ into $\R^{\widetilde{N}\times\widetilde{N}}$ that
maps the coordinates of the vertexes of $\widehat{E}$ into the entries
of the matrix $\bM$ is a continuous function. Moreover, for any
coordinates in $U$, $\widehat{E}$ satisfies $\mathbf{A_1}$ and
$\mathbf{A_2}$ and, hence, it contains a ball of radius $C_\CT$. Let us
show that this implies that $\bM$ has to be positive definite. In fact,
given $\alpha\in\R^{\widetilde{N}}$,
$\alpha^{T}\bM\alpha=\int_{\widehat{E}}
\big|\sum_{r=1}^{\widetilde{N}}\alpha_{r}\widehat{p}_{r}\big|^{2} \geq0$
and the equality holds only if
$\sum_{r=1}^{\widetilde{N}}\alpha_{r}\widehat{p}_{r}$ vanishes a.e. in
$\widehat{E}$, which in turn implies that $\alpha$ has to vanish (since
$\widehat{E}$ contains a ball of radius $C_\CT>0$). Thus, $\bM$ is
positive definite and hence invertible. Therefore, taking also into
account the continuity of the mapping $\bM\longmapsto\bM^{-1}$ for
invertible matrices, we conclude that the mapping 
$$
U\ni((\widehat{x}_1,\widehat{y}_1),\ldots,
(\widehat{x}_{N_E},\widehat{y}_{N_E}))
\longmapsto\bM^{-1}\in\R^{\widetilde{N}\times\widetilde{N}}
$$
is well defined and continuous and, hence, bounded above in the compact
set $U$. Consequently, from \eqref{aster},
\begin{equation*}
\left\|\boldsymbol{\beta}^s\right\|_{\infty}\leq Ch_E^{-2},
\end{equation*}
which recalling \eqref{sino} yields
\begin{equation}
\label{matrix}
\big\|\widehat{\psi}^{s}\big\|_{\infty,\widehat{E}}\leq Ch_E^{-2}.
\end{equation}
Let us remark that, in principle, the constant $C$ above depends on the
number $N_E$ of vertexes of $E$. However, by virtue of assumption
$\mathbf{A_1}$, this number is bounded above in terms of $C_{\T}$.
Therefore, $N_E$ can take only a finite number of possible values and,
hence, \eqref{matrix} holds true with $C$ only depending on $C_{\T}$.
Thus, we conclude the proof by combining \eqref{rodol}, \eqref{difhue2}
and \eqref{matrix}.
$\hfill\qed$
\end{proof}

Now, we are in a position to prove $\LO$ error estimates for the
$\bV_h$-interpolant.

\vspace{0.35cm}
\noindent\textbf{Lemma~\ref{lemmainterV_I}\ }
\textit{Let $\bv\in\bV$ be such that $\bv\in[\HtO]^2$ with $ t >1/2$.
Let $\bv_I\in\bV_h$ be its interpolant defined by
\eqref{uno}--\eqref{dos}. Let $E\in\CT_h$. If $1\leq t\leq k+1$, then
\begin{equation}
\label{estimatevII}
\left\|\bv-\bv_I\right\|_{0,E}\leq Ch_E^{t}\left|\bv\right|_{t,E},
\end{equation}
whereas, if $1/2 <t\leq1$, then
\begin{equation}
\label{estimatev2II}
\left\|\bv-\bv_I\right\|_{0,E}
\leq C\left(h_E^{t}\left|\bv\right|_{t,E}
+h_E\left\|\div\bv\right\|_{0,E}\right).
\end{equation}}

\begin{proof}
First, we consider the case $1\leq t\leq k+1$. The first step is to
bound $\left\|\bv_I\right\|_{0,E}$. Since $\bv_{I}\in\bV_{h}^{E}$,
thanks to \eqref{primera0}--\eqref{cuarta} we write it in the basis 
\eqref{spanv} as follows:
\begin{equation*}
\bv_I=\sum_{l=1}^{N_E}\sum_{i=0}^{k}
\left(\int_{e_l}\left(\bv\cdot\bn\right)q_{l}^i\ds\right)\bphi_l^{i}
+\sum_{s=1}^{\widetilde{N}}
\left(\int_{E}\left(\div\bv\right)p_{s}\right)\widetilde{\bphi}^{s}.
\end{equation*}
Therefore, from Lemmas~\ref{cotatheta} and \ref{cotatheta2} we have
$$
\left\|\bv_I\right\|_{0,E}
\leq C\left(\sum_{l=1}^{N_E}\sum_{i=0}^{k}
\left|\int_{e_l}\left(\bv\cdot\bn\right)q_{l}^i\ds\right|
+\sum_{s=1}^{\widetilde{N}}
\left|\int_{E}\left(\div\bv\right)p_{s}\right|\right).
$$
Then, by using that $\left\|q_{l}^{i}\right\|_{\infty,e_{l}}\!\!\!=1$
for $i=1,\dots,k$ and $l=1,\ldots,N_{E}$,
$\left\|p_{s}\right\|_{\infty,E}\leq C$ for $s=1,\dots,\widetilde{N}$,
the Cauchy-Schwarz inequality and Lemma~\ref{trace_s}, we obtain
\begin{align}
\nonumber
\left\|\bv_I\right\|_{0,E}
& \leq C\left(h_E^{1/2}\left\|\bv\right\|_{0,\partial E}
\left\|q_{l}^i\right\|_{\infty,e_l}
+\widetilde{N}h_E\left\|\div\bv\right\|_{0,E}
\left\|p_{s}\right\|_{\infty,E}\right)
\\
\nonumber
& \leq C\left(\left\|\bv\right\|_{0,E}
+h_E\left|\bv\right|_{1,E}
+h_{E}\left\|\div\bv\right\|_{0,E}\right)
\\
&\leq C\left(\left\|\bv\right\|_{0,E}
+h_E\left|\bv\right|_{1,E}\right).
\label{otraecuac}
\end{align}
Now, for all $\bv_k\in[\bbP_k(E)]^2$ we note that $(\bv_k)_I=\bv_k$ and,
hence, using the above estimate for $\bv-\bv_{k}$, we write
$$
\left\|\bv-\bv_I\right\|_{0,E}
=\left\|\bv-\bv_k-\left(\bv-\bv_k\right)_I\right\|_{0,E}
\leq\left\|\bv-\bv_k\right\|_{0,E}
+C\left(\left\|\bv-\bv_k\right\|_{0,E}
+h_E\left|\bv-\bv_k\right|_{1,E}\right).
$$
Thus, by choosing $\bv_{k}$ as in \cite[Proposition~4.2]{BBCMMR2013}, we
have that $\left\|\bv-\bv_k\right\|_{0,E}
+h_E\left|\bv-\bv_k\right|_{1,E}\leq C h_E^t\left|\bv\right|_{t,E}$,
which together with the above inequality allow us to conclude
\eqref{estimatevII}.

Next, we consider the case $1/2 <t\le1$. Using the same arguments as
above, we obtain in this case instead of \eqref{otraecuac},
\begin{equation}
\label{rodo3}
\left\|\bv_I\right\|_{0,E}
\leq C\left(\left\|\bv\right\|_{0,E}
+h_E^{t}\left|\bv\right|_{t,E}
+h_{E}\left\|\div\bv\right\|_{0,E}\right).
\end{equation}
Therefore, repeating again the arguments above with
$\bv_0\in[\bbP_0(E)]^2$ instead of $\bv_{k}$, we have
\begin{align*}
\left\|\bv-\bv_I\right\|_{0,E}
& \leq\left\|\bv-\bv_0-\left(\bv-\bv_0\right)_I\right\|_{0,E}
\\
& \leq\left\|\bv-\bv_0\right\|_{0,E}
+C\left(\left\|\bv-\bv_0\right\|_{0,E}
+h_E^t\left|\bv\right|_{t,E}
+h_E\left\|\div\bv\right\|_{0,E}\right)
\\
& \leq C\left(h_E^t\left|\bv\right|_{t,E}
+h_E\left\|\div\bv\right\|_{0,E}\right),
\end{align*}
where we have used again \cite[Proposition~4.2]{BBCMMR2013}. Thus, the
proof is complete.
$\hfill\qed$
\end{proof}

\begin{remark}
Estimate \eqref{estimatev2II} can be improved for $k=0$ and $1/2<t\le1$.
In fact, in such a case, the interpolant $\bv_I\in\bV_h$ is defined only
by \eqref{uno}.
Hence,
\begin{equation*}
\bv_I=\sum_{l=1}^{N_E}\left(\int_{e_l}
\left(\bv\cdot\bn\right)q_{l}^0\,ds\right)\bphi_l^{0}
\end{equation*}
and repeating the arguments above we obtain
\begin{equation*}
\left\|\bv_I\right\|_{0,E}
\leq C\left(\left\|\bv\right\|_{0,E}
+h_E^{t}\left|\bv\right|_{t,E}\right),
\end{equation*}
instead of \eqref{rodo3}, which leads to
$$
\left\|\bv-\bv_I\right\|_{0,E}
\leq C h_E^t\left|\bv\right|_{t,E}.
$$
\end{remark}

\bibliographystyle{amsplain}

\begin{thebibliography}{99}
\bibitem{equiv} 
Ahmad, B., Alsaedi, A., Brezzi, F., Marini, L.~D., Russo, A.:
Equivalent projectors for virtual element methods.
Comput. Math. Appl. \textbf{66}, 376--391 (2013).

\bibitem{ABMV2014} 
Antonietti, P.~F., Beir\~ao da Veiga, L., Mora, D., Verani, M.:
A stream virtual element formulation of the Stokes problem on polygonal
meshes.
SIAM J. Numer. Anal. \textbf{52}, 386--404 (2014).

\bibitem{AHSV} 
Antonietti, P.~F., Houston, P., Sarti, M., Verani, M.:
Multigrid algorithms for $hp$-version interior penalty discontinuous
Galerkin methods on polygonal and polyhedral meshes.
Preprint arXiv:1412.0913 [math.NA] (2014).

\bibitem{ALM15} 
Ayuso de Dios, B., Lipnikov, K., Manzini, G.:
The nonconforming virtual element method:
Preprint arXiv:1405.3741 [math.NA] (2014).

\bibitem{BO} 
Babu\v{s}ka, I., Osborn, J.: 
Eigenvalue problems 
In: Ciarlet, P.~G., Lions, J.~L. (eds.) 
Handbook of Numerical Analysis, 
Vol. II, pp. 641--787.
North-Holland, Amsterdam (1991).

\bibitem{BBCMMR2013} 
Beir\~ao da Veiga, L., Brezzi, F., Cangiani, A., Manzini, G., Marini,
L.~D., Russo, A.:
Basic principles of virtual element methods.
Math. Models Methods Appl. Sci. \textbf{23}, 199--214 (2013).

\bibitem{BBMR2014} 
Beir\~ao da Veiga, L., Brezzi, F., Marini, L.~D., Russo, A.:
The hitchhiker's guide to the virtual element method.
Math. Models Methods Appl. Sci. \textbf{24}, 1541--1573 (2014).

\bibitem{BBMR2015} 
Beir\~ao da Veiga, L., Brezzi, F., Marini, L.~D., Russo, A.:
Mixed virtual element methods for general second order elliptic problems
on polygonal meshes.
Preprint arXiv:1506.07328 [math.NA] (2015).

\bibitem{BLMbook2014} 
Beir\~ao da Veiga, L., Lipnikov, K., Manzini, G.:
The Mimetic Finite Difference Method for Elliptic Problems.
Springer, MS\&A, vol. 11 (2014).

\bibitem{BLM2015} 
Beir\~ao da Veiga, L., Lovadina, C., Mora, D.:
A virtual element method for elastic and inelastic problems on polytope
meshes.
Comput. Methods Appl. Mech. Engrg. \textbf{295}, 327--346 (2015).

\bibitem{BBPS2014} 
Benedetto, M.~F., Berrone, S., Pieraccini, S., Scial\`o, S.:
The virtual element method for discrete fracture network simulations.
Comput. Methods Appl. Mech. Engrg. \textbf{280}, 135--156 (2014).

\bibitem{BDMR95} 
Berm\'udez, A., Dur\'an, R., Muschietti, M.A., Rodr\'iguez, R.,
Solomin, J.:
Finite element vibration analysis of fluid-solid systems without
spurious modes.
SIAM J. Numer. Anal. \textbf{32}, 1280--1295 (1995).

\bibitem{BDRS} 
Berm\'udez, A., Dur\'an, R., Rodr\'iguez, R., Solomin, J.:
Finite element analysis of a quadratic eigenvalue problem arising in
dissipative acoustics.
SIAM J. Numer. Anal. \textbf{38}, 267--291 (2000).

\bibitem{BGHRS2008} 
Berm\'udez, A., Gamallo, P., Hervella-Nieto, L., Rodr\'iguez, R.,
Santamarina, D.:
Fluid-structure acoustic interaction.
In: Marburg, S., Nolte, B. (eds.)
Computational Acoustics of Noise Propagation in Fluids. Finite and
Boundary Element Methods.
Springer, Chap. 9, pp. 253--286 (2008).

\bibitem{B96} 
Berm\'udez, A., Hervella-Nieto, L., Rodr\'iguez, R.:
Finite element computation of three-dimensional elastoacoustic
vibrations. 
J. Sound Vibration, \textbf{219}, 279--306 (1999).

\bibitem{BHPR2001} 
Berm\'udez, A., Rodr\'iguez, R.:
Finite element computation of the vibration modes of a fluid-solid
system.
Comput. Methods Appl. Mech. Engrg. \textbf{119}, 355--370 (1994).

\bibitem{Boffi} 
Boffi, D.:
Finite element approximation of eigenvalue problems.
Acta Numerica, \textbf{19}, 1--120 (2010).

\bibitem{BGG2012} 
Boffi, D., Gardini, F., Gastaldi, L.:
Some remarks on eigenvalue approximation by finite elements.
In: Frontiers in Numerical Analysis--Durham 2010. 
Lect. Notes Comput. Sci. Eng. 
\textbf{85}, Springer, Heidelberg, 1--77 (2012).

\bibitem{BS-2008} 
Brenner, S.~C., Scott, R.~L.:
The Mathematical Theory of Finite Element Methods.
Springer, New York (2008).

\bibitem{ultimo} 
Brezzi, F., Falk, R.~S., Marini, L.~D.:
Basic principles of mixed virtual element methods.
ESAIM Math. Model. Numer. Anal. \textbf{48}, 1227--1240 (2014).

\bibitem{BM12} 
Brezzi, F., Marini, L.~D.:
Virtual elements for plate bending problems.
Comput. Methods Appl. Mech. Engrg. \textbf{253}, 455--462 (2012).

\bibitem{CGH14} 
Cangiani, A., Georgoulis, E.~H., Houston, P.:
$hp$-version discontinuous Galerkin methods on polygonal and polyhedral
meshes.
Math. Models Methods Appl. Sci. \textbf{24}, 2009--2041 (2014).

\bibitem{ciarlet}
Ciarlet, P.~G.:
The Finite Element Method for Elliptic Problems.
SIAM, Philadelphia (2002).

\bibitem{Raviart1} 
Descloux, J., Nassif, N., Rappaz, J.:
On spectral approximation. Part 1: The problem of convergence.
RAIRO Anal. Num\'er. \textbf{12}, 97--112 (1978).

\bibitem{Raviart2} 
Descloux, J., Nassif, N., Rappaz, J.:
On spectral approximation. Part 2: Error estimates for the Galerkin
method.
RAIRO Anal. Num\'er. \textbf{12}, 113--119 (1978).

\bibitem{DPECMAME2015} 
Di Pietro, D., Ern, A.:
A hybrid high-order locking-free method for linear elasticity on general
meshes.
Comput. Methods Appl. Mech. Eng. \textbf{283}, 1--21 (2015).

\bibitem{DPECRAS2015} 
Di Pietro, D., Ern, A.:
Hybrid high-order methods for variable-diffusion problems on general
meshes.
C. R. Acad. Sci., Paris I, \textbf{353}, 31--34 (2015).

\bibitem{Paulino-VEM}
Gain, A.~L., Talischi, C., Paulino, G.H.:
On the virtual element method for three-dimensional linear elasticity
problems on arbitrary polyhedral meshes.
Comput. Methods Appl. Mech. Engrg. \textbf{282}, 132--160 (2014).

\bibitem{DI2001} 
Hamdi, M., Ousset, Y.and Verchery, G.:
A displacement method for the analysis of vibrations of coupled
fluid-structure systems.
Internat. J. Numer. Methods Engrg. \textbf{13}, 139--150 (1978).

\bibitem{EM77} 
Kiefling, L., Feng, G.~C.:
Fluid-structure finite element vibrational analysis.
AIAA J. \textbf{14}, 199--203 (1976).

\bibitem{LMR} 
Lovadina, C., Mora, D., Rodr\'iguez, R.:
Approximation of the buckling problem for Reissner-Mindlin plates.
SIAM J. Numer. Anal. \textbf{48}, 603--632 (2010).

\bibitem{MRR2015} 
Mora, D., Rivera, G., Rodr\'iguez, R.:
A virtual element method for the Steklov eigenvalue problem.
Math. Models Methods Appl. Sci. \textbf{25}, 1421--1445 (2015).

\bibitem{PPR15} 
Perugia, I., Pietra, P., Russo, A.:
A plane wave virtual element method for the Helmholtz problem.
Preprint arXiv:1505.04965 [math.NA](2015).

\bibitem{HST2000} 
Rodr\'iguez, R., Solomin, J.:
The order of convergence of eigenfrequencies in
finite element approximations of fluid-structure interaction problems.
Math. Comp. \textbf{65}, 1463--1475 (1996). 

\bibitem{ST04} 
Sukumar, N., Tabarraei, A.:
Conforming polygonal finite elements.
Internat. J. Numer. Methods Engrg. \textbf{61}, 2045--2066 (2004).

\bibitem{TPPM10}
Talischi, C., Paulino, G.~H., Pereira, A., Menezes, I.~F.~M.:
Polygonal finite elements for topology optimization: A unifying
paradigm.
Internat. J. Numer. Methods Engrg. \textbf{82}, 671--698 (2010).

\bibitem{I98} 
Zienkiewicz, O.~C., Taylor, R.~L.:
The Finite Element Method.
Vol. 2, McGraw-Hill, London, 1991.
\end{thebibliography}

\end{document}